\newtheorem{Prop}{Proposition}[section]
\newtheorem{Theo}[Prop]{Theorem}
\newtheorem{Lemm}[Prop]{Lemma}
\newtheorem{Rema}{Remark}
\begin{document}
	\fontsize{12pt}{18pt}\selectfont
	\title{Regularization of Inverse Problems by Filtered Diagonal Frame Decomposition under general source}
	\author{Dang Duc Trong$^{1,2}$, Nguyen Dang Minh$^{3}$, Luu Xuan Thang$^4$, Luu Dang Khoa$^{1,2}$}

	\date{\today}
	\maketitle
		$^{\text{1}}$ Faculty of Maths and Computer Science, University of Science, Ho Chi Minh City, Viet Nam.
		
		 $^{\text{2}}$ Vietnam National University, Ho Chi Minh City, Viet Nam.
		 
		$^{\text{3}}$ Faculty of Fundamental Science, Ho Chi Minh City Open University,  Viet Nam.

		$^{\text{4}}$ Faculty of Natural Science, University of Khanh Hoa, Vietnam. 
	
	\begin{abstract}
Let $X$ and $Y$ be Hilbert spaces, and $\mathbf{K}: \text{dom} \mathbf{K} \subset X \to Y$ a bounded linear operator. This paper addresses the inverse problem $\mathbf{K}x = y$, where exact data $y$ is replaced by noisy data $y^\delta$ satisfying $\|y^\delta - y\|_Y \leq \delta$. Due to the ill-posedness of such problems, we employ regularization methods to stabilize solutions. While singular value decomposition (SVD) provides a classical approach, its computation can be costly and impractical for certain operators. We explore alternatives via Diagonal Frame Decomposition (DFD), generalizing SVD-based techniques, and introduce a regularized solution $x^\delta_\alpha = \sum_{\lambda \in \Lambda} \kappa_\lambda g_\alpha(\kappa_\lambda^2) \langle y^\delta, v_\lambda \rangle \overline{u}_\lambda$. Convergence rates and optimality are analyzed under a generalized source condition $\mathbf{M}_{\varphi, E} = \{ x \in \text{dom} \mathbf{K} : \sum_{\lambda \in \Lambda} [\varphi(\kappa_\lambda^2)]^{-1} |\langle x, u_\lambda \rangle|^2 \leq E^2 \}$. Key questions include constructing DFD systems, relating DFD and SVD singular values, and extending source conditions. We present theoretical results, including modulus of continuity bounds and convergence rates for a priori and a posteriori parameter choices, with applications to polynomial and exponentially ill-posed problems.
	\end{abstract}
	
	\textbf{Key words:} Ill-posed problem; frame decompostion; convergence rates; Inverse problem. \\[0.2cm]
	\textbf{MSC 2010:}  47A52; 47J06.
	
	\section{Introduction}

Let $X$ and $Y$ be Hilbert spaces, and let $\mathbf{K}: \text{dom} \mathbf{K} \subset X \to Y$ be a bounded linear operator. In this paper, we seek a solution $x \in X$ to the inverse problem defined by the operator equation
\begin{equation}
	\label{eq-operator-org}
	\mathbf{K}x = y.
\end{equation}
As is customary, we assume that the exact data $y$ is unavailable, and instead, we are given noisy data $y^\delta$ with a known noise level $\delta$. Specifically, the noise satisfies
\begin{equation}
	\label{ieq-noise-org}
	\| y^\delta - y \|_Y \leq \delta.
\end{equation}
Due to the inherent instability of inverse problems, even a small perturbation in the data can lead to significant errors in the solution, rendering the numerical computation of solutions to the inverse problem challenging. To address this issue, we employ a regularization method for the system defined by equations \eqref{eq-operator-org} and \eqref{ieq-noise-org}. One such regularization approach, based on filtering techniques, has been thoroughly developed in \cite{Kirch} and \cite{Engl}. In this context, $\mathbf{K}$ is assumed to be a compact operator possessing a singular system $(\sigma_k, u_k, v_k)$. Consequently, $\mathbf{K}$ admits a singular value decomposition (SVD) of the form
\begin{equation}
	\label{eq-SVD}
	\mathbf{K} x = \sum_{k=1}^\infty \sigma_k \langle x, u_k \rangle v_k, \nonumber
\end{equation}
where $\sigma_k$ denotes the singular values, and $u_k$ and $v_k$ are singular functions satisfying
\begin{equation}
	\label{eq-SVD2}
	\mathbf{K} u_k = \sigma_k v_k \quad \text{and} \quad \mathbf{K}^* v_k = \sigma_k u_k.
\end{equation}
It is well known that the SVD is a fundamental tool for solving inverse problems. The minimum-norm least-squares solution $x^\ddagger$ to equation \eqref{eq-operator-org} is then given by the Picard formula
\[
x^\ddagger := \mathbf{K}^\ddagger y := \sum_{k=1}^\infty \frac{\langle y, v_k \rangle}{\sigma_k} u_k,
\]
provided the Picard condition holds:
\[
\sum_{k=1}^\infty \frac{|\langle y, v_k \rangle|^2}{\sigma_k^2} < \infty.
\]
When the exact data $y$ is replaced by the noisy data $y^\delta$ with a given noise level $\delta$, the approximate solution takes the form
\begin{equation}
	x^\delta_\alpha := \mathbf{R}_\alpha y^\delta := \sum_{k=1}^\infty \sigma_k g_\alpha (\sigma_k^2) \langle y^\delta, v_k \rangle u_k,
	\label{SVD-regularization}
\end{equation}
where $\alpha > 0$, $g_\alpha$ is a function satisfying $g_\alpha(\lambda) \to 1/\lambda$ as $\alpha \to 0$, and $\mathbf{R}_\alpha$ represents a regularization operator for equation \eqref{eq-operator-org}. Furthermore, the convergence rate and optimality of this regularization are analyzed under the classical source condition $x^\ddagger = \varphi(\mathbf{K}^*\mathbf{K})z$ for some $z \in X$ (see, e.g., \cite{Tautenhahn}).

However, computing the SVD of an operator is often nontrivial and, in certain cases, computationally expensive, as noted by Ebner, Goppel, and Donoho in \cite{Ebner}, \cite{Goppel}, and \cite{Donoho}, respectively. Additionally, SVD-based regularization may not be well-suited for a variety of problems, as highlighted by Donoho in \cite{Donoho}. Thus, developing more efficient computational methods becomes essential. One approach is to identify a system that partially satisfies the SVD conditions in \eqref{eq-SVD2}. A notable development in this direction retains the second condition, i.e., $\mathbf{K}^* v_\lambda = \kappa_\lambda u_\lambda$, where $\lambda$ belongs to a countable index set $\Lambda$. This concept underpins the Wavelet-Vaguelette Decomposition (WVD) in \cite{Donoho}, and more generally, the Diagonal Frame Decomposition (DFD) in \cite{Ebner}, as well as the Translation-Invariant DFD (TI-DFD) in \cite{Goppel}. By leveraging frame theory, such generalizations enable the construction of an expansion analogous to the Picard formula, suggesting that regularization methods tailored to this framework hold significant potential.

Indeed, in \cite{Ebner}, the authors reformulate foundational concepts for DFD-based regularization filtering, akin to the SVD filtering presented in \cite{Engl}. To estimate regularization errors, they adapt the source condition, assuming the solution belongs to a DFD-type source set with a polynomial form:
\begin{equation}
	\label{eq-source}
	\mathbf{M}_{p, E} := \left\{ x \in \text{dom} \mathbf{K} : \sum_{\lambda \in \Lambda} \kappa_\lambda^{-4\nu} |\langle x, u_\lambda \rangle|^2 \leq E^2 \right\}, \nonumber
\end{equation}
where $\nu, E > 0$, and $(u_\lambda, v_\lambda, \kappa_\lambda)$ constitutes a DFD of the operator $\mathbf{K}$. A similarly modified source condition appears in \cite{Hubmer}, where the authors explore the optimality of a posteriori regularization methods. The framework developed in \cite{Ebner, Hubmer} opens up numerous application possibilities. As this theory is still emerging, several natural questions arise:

(i) How can DFD systems be constructed for specific problems?

(ii) What is the relationship between the DFD singular values $\kappa_\lambda$ and the SVD singular values $\sigma_k$? How do these quasi-singular values influence the regularization of ill-posed problems?

(iii) Can the polynomial DFD source condition be generalized to other forms, such as logarithmic source conditions?

(iv) How does the DFD source condition relate to the classical source condition?

(v) Do a priori and a posteriori regularization methods achieve optimality?

Question (i) is particularly compelling and has been extensively explored in the field of tomography (see \cite{Ebner, Hubmer}). This problem exhibits polynomial ill-posedness, where WVD systems prove effective. In Section 4 of this paper, we examine the backward fractional problem, considering two scenarios: polynomial ill-posedness and exponential ill-posedness. In the latter case, the WVD system appears inadequate, prompting us to propose a specialized DFD system.

The investigation of Question (ii) remains in its early stages. In \cite{Ebner, Hubmer}, it is limited to assessing the ill-posedness of the problem $\mathbf{K}x = y$. Our paper advances this inquiry by exploring the ``sparseness'' or ``thickness'' of the DFD singular values $\kappa_\lambda$ through the set
\begin{equation}
	D_{\lambda,\beta} = [\delta_\lambda^*, \beta^{-1} \delta_\lambda^*], \quad \text{where} \quad \delta_\lambda^* = |\mathbf{v}|_{\inf}^{-1} E \sqrt{\kappa_\lambda^2 \varphi(\kappa_\lambda^2)},
	\label{set-D}
\end{equation}
with $\varphi$ being an index function (detailed in the subsequent paragraph). This set facilitates assertions regarding the sequential or uniform optimality of regularization methods.

Inspired by Question (iii), we extend the results of \cite{Ebner, Hubmer} to a DFD source set defined by a general source function, rather than a polynomial one. Specifically, for a positive constant $E$,
\begin{equation}
	\label{eq-source-1}
	\mathbf{M}_{\varphi, E} := \left\{ x \in \text{dom} \mathbf{K} : \sum_{\lambda \in \Lambda} [\varphi(\kappa_\lambda^2)]^{-1} |\langle x, u_\lambda \rangle|^2 \leq E \right\},
\end{equation}
where the index function $\varphi$ satisfies conditions outlined in later assumptions. Such conditions naturally arise in ill-posed problems, such as tomography with $\varphi(\mu) = \mu^{2\nu}$ or the backward problem with $\varphi(\mu) = (-\ln \mu)^{-p}$ (see subsequent sections). This topic merits further attention, and in Subsection 4.4 of our paper, we study the latter index function $\varphi$.

To address Question (iv), we present two examples demonstrating that the classical source condition can suffice to derive the DFD source condition. These examples illustrate the connection between classical and DFD source conditions, though these findings are preliminary and warrant deeper investigation in future work.

Question (v) is thoroughly explored in this paper. Building on \cite{Ebner, Hubmer} and the framework of \eqref{SVD-regularization}, we construct a filtered regularization of the form
\begin{equation}
	\label{eq-solution-regular-noise-dfd}
	x^\delta_\alpha := R_\alpha y^\delta = \sum_{\lambda \in \Lambda} \kappa_\lambda g_\alpha (\kappa_\lambda^2) \langle y^\delta, v_\lambda \rangle_Y \overline{u}_\lambda.
\end{equation}
Additionally, we enhance the analysis by addressing a posteriori strategies, filling a gap left by \cite{Ebner} and \cite{Goppel}. We further refine the classification of optimality properties, distinguishing between sequential order optimality (as noted in \cite{Ebner, Hubmer}) and global order optimality, the latter of which has not been previously addressed.

In terms of structure, Section 2 reviews foundational results on frames and defines optimal regularization. Section 3 presents the main results of this paper, including lower bounds for the modulus of continuity of $\mathbf{K}^{-1}$ on the set $\mathbf{M}_{\varphi,E}$ and convergence rates for a priori and a posteriori parameter choices. Section 4 applies these theoretical findings to specific examples, while Section 5 provides the proofs of the main results.
	
	\section{Some basic notions and notations}
	\subsection{Notions of frames}
	Letting $\Lambda$ be an at most countable set of indices, we denote
	$$ l^2(\Lambda)=\left\{\mathbf{a}=(a_\lambda)_{\lambda\in\Lambda}: a_\lambda\in \mathbb{R},
	\sum_{\lambda\in\Lambda}|a_\lambda|^2<\infty\right\} $$
	with the norm $\|\mathbf{a}\|_{2}=\left(\sum_{\lambda\in\Lambda}|a_\lambda|^2\right)^{1/2}$.
	
	Before delving into the specific content of the article, we would like to recall some results about frames in a Hilbert space $\mathbb{K}$. These results can be found in \cite{Christ}, \cite{Ebner}, and \cite{Hubmer}. For convenience, let us introduce the definition of a frame.

	\noindent{\bf Definition.} {\it A sequences $\mathbf{w}=\{w_\lambda\}_{\lambda \in \Lambda} $ in a Hilbert subspace $\mathbb{H}\subset \mathbb{K}$ is called a frame over $\mathbb{H}$, if and only if there exists frame bounds $0< A_w, B_w \in \mathbb{R}$ such that for all $x \in \mathbb{H}$ there holds
		\begin{equation}
			A_w \left\|x \right\|^2_{\mathbb{K}} \leq \sum_{\lambda\in\Lambda} \left|\langle x, w_\lambda \rangle_{\mathbb{K}} \right|^2 \leq B_w \left\|x \right\|^2_{\mathbb{K}}.
			\label{AwBw} 	
		\end{equation}
		If 
		$w_{\lambda_0}\not\in\overline{{\rm span}\{w_\lambda\}}_{\lambda\not=\lambda_0}$  for every $\lambda_0\in \Lambda$ then we say that the frame is minimal.
		
		For convenience, we denote 
		$\|x\|_w:=\sqrt{\sum_{\lambda\in\Lambda} \left|\langle x, w_\lambda \rangle_{\mathbb{K} }\right|^2} $ for every $x\in \mathbb{K}$ and 
		\begin{align*}
			|\mathbf{w}|_{\inf}&=\inf\{\|x\|_w: x\in \mathbb{H} ~\text{and}~\|x\|_{\mathbb{K}}=1\},\\
			|\mathbf{w}|_{\sup}&=\sup\{\|x\|_w: x\in \mathbb{H} ~\text{and}~\|x\|_{\mathbb{K}}=1\}.  
		\end{align*}
		If $|\mathbf{w}|_{\inf}=|\mathbf{w}|_{\sup} $, we say that the frame is tight and denote $|\mathbf{w}|_{\rm fr}:=|\mathbf{w}|_{\sup}=|\mathbf{w}|_{\inf}$ .
		
	}
	
	From the definition, we have $0<\sqrt{A_w}\leq |\mathbf{w}|_{\inf}\leq |\mathbf{w}|_{\sup}\leq \sqrt{B_w} $ and
	\begin{equation}
		|\mathbf{w}|_{\inf} \|x\|_{\mathbb{K}}\leq \|x\|_w\leq |\mathbf{w}|_{\sup}\|x\|_{\mathbb{K}}
		\label{AwBw-supinf}    
	\end{equation} 
	for every $x\in\mathbb{H}$.
	For $x'\in \mathbb{K}$, we have
	\begin{equation}
		\|x'\|_w^2=\sum_{\lambda\in\Lambda} \left|\langle P_{\mathbb{H}}x', w_\lambda \rangle_{\mathbb{K}} \right|^2 \leq |\mathbf{w}|_{\sup} \left\|P_{\mathbb{H}} x' \right\|^2_{\mathbb{K}}
		\leq |\mathbf{w}|_{\sup} \left\|x' \right\|^2_{\mathbb{K}}.
		\label{Bw-general}
	\end{equation}
	Here $P_{\mathbb{H}}$ is the orthogonal projection on $\mathbb{H}$.
	For a given frame $\{w_\lambda\}_{\lambda\in\Lambda} $ , one can define the frame analysis operator $F$ as below 
	\begin{equation}
		\label{eq-analysis-operator}
		F: \mathbb{H} \to l_2 \left( \Lambda\right), \quad  x \mapsto \left\{\langle x, w_\lambda \rangle_{\mathbb{K}} \right\}_{\lambda\in\lambda}.
		\nonumber
	\end{equation}
	And, the synthesis operator $F^*$, which is given by 
	\begin{equation}
		\label{eq-synthesis-operator}
		F^*: l_2\left( \Lambda\right) \to {\mathbb{H}} , \quad (a_\lambda)_{\lambda\in \Lambda} \mapsto \sum_{\lambda\in\Lambda} a_\lambda w_\lambda.
		\nonumber
	\end{equation}
	From the inequality \eqref{AwBw}, there holds
	\begin{equation}
		\label{ieq-bounded-operator-norm}
		\sqrt{A_w} \leq \Vert F \Vert = \Vert F^* \Vert \leq \sqrt{B_w}. \nonumber
	\end{equation}
	We can define the operator $S: = F^*F$, that is, 
	$$Sx: = \sum_{\lambda\in\Lambda} \langle x, w_\lambda \rangle_{\mathbb{K}} w_\lambda.$$
	It is worth noting that, in this case, the operator $S$ is a bounded, linear, and invertible operator. Specifically, $A_w I \leq S \leq B_w I$ và $B_w^{-1}I \leq S^{-1} \leq A_w^{-1} I$. Therefore, if we set $\overline{w}_k: = S^{-1} w_k$ then one holds
	\begin{equation}
		B_w^{-1} \left\|x \right\|^2_{\mathbb{K}} \leq \|x\|^2_{\overline{w}} \leq A_w^{-1} \left\|x \right\|^2_{\mathbb{K}} \nonumber
	\end{equation}
	for every \(x \in \mathbb{H}\). Consequently, the set \(\{\overline{w}_\lambda\}_{\lambda\in \Lambda}\) is also a frame over \(\mathbb{H}\). 
	As we know, it is referred to as the dual frame of \(\{w_k\}_{k\in \mathbb{N}}\). In that case, the analysis and synthesis operators of this frame are as follows.
	The analysis operator $\overline{F}$ is defined as below 
	\begin{equation}
		\label{eq-analysis-operator}
		\overline{F}: \mathbb{H} \to l_2 \left( \Lambda\right), \quad  x \mapsto \left\{\langle x, \overline{w}_\lambda \rangle_\mathbb{K} \right\}_{\lambda \in \Lambda}
		\nonumber
	\end{equation}
	and, the synthesis operator $\overline{F}^*$, which is given by 
	\begin{equation}
		\label{eq-synthesis-operator}
		\overline{F}^*: l_2\left( \Lambda\right) \to \mathbb{H} , \quad \{a_\lambda\}_{\lambda
			\in \Lambda} \mapsto \sum_{\lambda\in\Lambda} a_\lambda \overline{w}_\lambda. \nonumber
	\end{equation}
	From the inequalities \eqref{AwBw}, \eqref{AwBw-supinf} there also holds
	\begin{equation}
		\label{ieq-bounded-operator-norm}
		\sqrt{B_w^{-1}}\leq |\mathbf{w}|_{\sup}^{-1} \leq \Vert \overline{F} \Vert = \Vert \overline{F}^* \Vert \leq |\mathbf{w}|_{\inf}^{-1}\leq \sqrt{A_w^{-1}}. \nonumber
	\end{equation}
	It follows that
	\begin{equation}
		\left\| \sum_{\lambda\in \Lambda} a_\lambda \overline{w}_\lambda\right\|_{\mathbb{K}}
		=\|\overline{F}^*(\{a_\lambda\})\|_\mathbb{K}\leq  |\mathbf{w}|_{\inf}^{-1}\left(\sum_{\lambda\in\Lambda}|a_\lambda|^2\right)^{1/2}.
		\label{w-dual-upper-inequality}
	\end{equation}
	Moreover, it can also be proved that
	$ \overline{F}^* F = F^*\overline{F} = I, $
	and thus, for any \(x \in \mathbb{H}\), it can always be expressed as $x = \sum_{\lambda\in\Lambda} x_\lambda  \overline{w}_\lambda$ where $x_\lambda=\langle x, w_\lambda \rangle_\mathbb{H}+a_\lambda$ with $\mathbf{a}=(a_\lambda)\in N(\overline{F}^*)$. Especially, we have 
	\begin{equation}
		\label{eq-present-x}
		x = \sum_{\lambda\in\Lambda} \langle x, w_\lambda \rangle_\mathbb{H}  \overline{w}_\lambda. 
	\end{equation}
	Generally, the calculation of $\overline{w}_\lambda$ is only easy in some special case. In fact, if $\{w_\lambda\}$ is tight  then $\overline{w}_\lambda=\frac{1}{|\mathbf{w}|_{\rm fr}}w_\lambda$ (see, e.g., \cite{Christ}, chap. 5).
	In general, we always have $\{0\} \subset N\left(F^*\right) = N\left(\overline{F}^*\right)$, and therefore, the representation of $x$ in (\ref{eq-present-x}) is not unique. However, this representation is considered the most economical according to \cite{Hubmer}. From \cite{Ebner}, we have known that the frame $\{\overline{w}_\lambda\}$ is the biorthonormal sequence of $\{w_\lambda\}$, i.e. $\langle w_\lambda,\overline{w}_\nu\rangle_{\mathbb{K}}=\delta_{\lambda\nu}$ for $\lambda,\nu\in\Lambda$, is equivalent to $\{w_\lambda\}$ being minimal. 
	In this case, we have $x_\lambda=\langle x, w_\lambda \rangle_\mathbb{H}$ and the expansion \eqref{eq-present-x} is unique.    
	Next, we recall the definition of diagonal frame decomposition (see, e.g., \cite{Ebner}). 
	
	\noindent{\bf Definition.} {\it
		Let $\mathbf{K}: X \to Y$ be bounded linear operator, and $ \Lambda$ is an at most countable index set. We set $\left(\mathbf{u}, \mathbf{v}, \boldsymbol{\kappa} \right)= \left(u_\lambda, v_\lambda, \kappa_\lambda \right)_{\lambda \in \Lambda}$ a diagonal frame decomposition (DFD) for the operator $\mathbf{K}$ if the following holds
		\begin{description}
			\item[(D1)] $\{u_\lambda\}_{\lambda \in \Lambda}$ is a frame over $\left( ker \mathbf{K}\right)^\bot \subset X$.
			\item[(D2)]  $\{v_\lambda\}_{\lambda \in \Lambda}$ is a frame over $ \overline{ran \mathbf{K}} \subset Y$.
			\item[(D3)]  $\left(\kappa_\lambda \right)_{\lambda \in \Lambda} \in \left( 0; \infty \right)^\Lambda$ satisfies the quasi-singular relations
			\begin{equation}
				\label{eq-dfd-condition}
				\mathbf{K}^* v_\lambda = \kappa_\lambda u_\lambda, \text{ for all } \lambda \in \Lambda. \nonumber
			\end{equation}
		\end{description}
		The $\kappa_\lambda$ are called the DFD singular value.
	}
	
	For $h:\mathbb{R}\to [0,\infty)$, $x\in X$,  we define 
	$$ \langle h(\mathbf{K}^*\mathbf{K})x,x \rangle_u=\sum_{\lambda\in\Lambda}h(\kappa_\lambda^2)
	|\langle x,u_\lambda\rangle_Y|^2. $$

	From ({\bf D1}), ({\bf D2}) we can find numbers $A_u, A_v, B_u, B_v>0$ such that
	\begin{align}
		A_u\|w\|^2_X\leq \|w\|_u^2\leq B_u\|w\|^2_X, & \forall w\in (ker \mathbf{K})^\perp,
		\label{AuBu} \\
		A_v\|z\|^2_Y\leq \|z\|^2_v \leq B_v\|z\|^2_Y, & \forall z\in \overline{rank\mathbf{K}}.
		\label{AvBv}
	\end{align}
	From now on, we always denote by $a^*$ an extended real  number such that $a^*>\sup_\lambda\kappa_\lambda^2$
	if $\sup_\lambda\kappa_\lambda^2<\infty$ and $a^*=\infty$ if $\sup_\lambda\kappa_\lambda^2=\infty$. 
	
	\subsection{Notions of the worst case error and optimality}
	Consider the problem \eqref{eq-operator-org} and denote the Moore-Penrose operator 
	$$ \mathbf{K}^\ddagger (z)= {\rm argmin} \{\|h\|_{X}: h\in~
	{\rm dom}~\mathbf{K}, z\in {\rm rank} \mathbf{K}, \mathbf{K}(h)=z\}. $$
	We denote
	the Moore-Penrose solution 
	of \eqref{eq-operator-org} by $x^\ddagger=\mathbf{K}^\ddagger y$.
	Let
	an operator $\mathbf{R}:Y\to X$ satisfy $\mathbf{R}y\approx x$. We say that $\mathbf{R}$ is an approximation method of the problem 
	\eqref{eq-operator-org}. Assume that the solution $x^\ddagger$ of \eqref{eq-operator-org} belongs to a subset $\mathbf{M}\subset X$, we recall the definition of the worst-case error of the method $\mathbf{R}$ on $\mathbf{M}$ as below.
	$$
	\Delta (\mathbf{M}, \delta, \mathbf{R}):=\sup \left\{\|\mathbf{R} y^\delta-x^\ddagger\|: x^\ddagger \in \mathbf{M} \wedge y^\delta \in Y \wedge \| \mathbf{K} x^\ddagger-y^\delta \| \leq \delta\right\}.
	$$
	We say that the method $\mathbf{R}_{opt}:Y\to X$ is optimal on $\mathbf{M}$ if 
	$\Delta (\mathbf{M}, \delta, \mathbf{R}_{opt})= \inf_{\mathbf{R}}\Delta (\mathbf{M}, \delta, \mathbf{R}) $ and $\mathbf{R}_{opt}:Y\to X$ is order optimal on $\mathbf{M}$ if there is a $c>0$ independent of $\delta$ such that
	$\Delta (\mathbf{M}, \delta, \mathbf{R}_{opt})\leq c\inf_{\mathbf{R}}\Delta (\mathbf{M}, \delta, \mathbf{R}).  $
	In  our paper, we choose $\mathbf{M}=\mathbf{M}_{\varphi,E}$ defined in (\ref{eq-source-1}).
	Let  $\mathbf{R}_\alpha: Y\to X$, $\alpha>0$, be a family of bounded operators and let
	$\alpha^*:(0,\alpha_0)\times Y\to (0,\infty)$. As in \cite{Kirch, Ebner}, we say that
	$(\mathbf{R}_\alpha,\alpha^*)$  is a regularization method if 
	\begin{align*}
		&\limsup_{\delta\to 0^+}\{\alpha^*(\delta,y^\delta):~ y^\delta\in Y\wedge \|y^\delta-y\|_Y\leq\delta\}=0,\\
		&\limsup_{\delta\to 0^+}\{\|\mathbf{K}^\ddagger y-\mathbf{R}_{\alpha^*(\delta,y^\delta)}\|:~
		y^\delta\in Y\wedge \|y^\delta-y\|_Y\leq \delta\}=0.
	\end{align*}
	The quantities $\alpha$ and $\alpha^*$ are called the regularization parameter  and the admissible parameter choice respectively.

	Inspired from the classical optimal regularization theory
	(\cite{Tautenhahn, Vainik}), we can
	classify the order optimality for our problem.
	
	\noindent{\bf Definition.} {\it We say that the regularization method $(\mathbf{R}_\alpha,\alpha^*)$ is 
		
		(a) sequential order optimal if there a sequence $\delta_n\to 0^+$ such that there exists a constant $c>0$ independent of $n$ such that
		$\Delta (\mathbf{M}, \delta_n, \mathbf{R}_{\alpha^*(\delta_n,y^{\delta_n})})\leq c\inf_{\mathbf{R}}\Delta (\mathbf{M}, \delta, \mathbf{R})$,
		
		(b) uniform order optimal if there is a $\delta_0$ and a constant $c$ in dependent of $\delta$ such that  $\Delta (\mathbf{M}, \delta, \mathbf{R}_{\alpha^*(\delta,y^{\delta})})\leq c\inf_{\mathbf{R}}\Delta (\mathbf{M}, \delta, \mathbf{R})$ for every $\delta\in(0,\delta_0)$. 
	} 
	
	Sequential optimal regularization are studied in 
	the recent papers \cite{Ebner, Goppel, Hubmer}, but uniformly optimal regularizations are not yet discussed.  
	
	
	\section{Main results}
	\subsection{Pointwise convergence}
	
	Let $\left(\textbf{u}, \textbf{v}, \boldsymbol{\kappa} \right)  $ be a DFD for $\mathbf{K}$ and 
	$y$ be as in \eqref{eq-operator-org}, we have
	$$ \langle y,v_\lambda\rangle_Y=\langle \mathbf{K}x^\ddagger,v_\lambda\rangle_Y=\langle x^\ddagger,\mathbf{K}^*v_\lambda\rangle_X =\langle x^\ddagger,\kappa_\lambda u_\lambda\rangle_X~~~{\rm for}~\lambda\in\Lambda.$$ 
	Hence, from the expansion \eqref{eq-present-x},
	the Moore-Penrose solution of (\ref{eq-operator-org}) has the expansion 
	\begin{equation}
		\label{eq-solution-dfd}
		x^\ddagger: = \mathbf{K}^\ddagger y = \sum_{\lambda \in \Lambda} \dfrac{1}{\kappa_\lambda} \langle y, v_\lambda \rangle_Y \overline{u}_\lambda.
	\end{equation}
	The expansion implies $y\in {\rm dom} \mathbf{K}^\ddagger$ if and only if $\sum_\lambda \left|\frac{\langle y,v_\lambda\rangle_Y}{\kappa_\lambda}\right|^2<\infty$.
	The stability of solution \eqref{eq-solution-dfd} is depended on the infimum of $\{\kappa_\lambda\}$.
	In fact we have
	\begin{Theo}
		\label{well-posed}
		Assume that $\inf_{\lambda\in\Lambda}\kappa_\lambda\geq\kappa_0>0$ and \eqref{ieq-noise-org} holds. Then  the operator
		$\mathbf{K}^\ddagger: {\rm rank} \mathbf{K}\to X$
		can be extended to the linear bounded operator $\overline{\mathbf{K}}^\ddagger:Y\to X$  which has the formula
		$$\overline{\mathbf{K}}^\ddagger h = \sum_{\lambda \in \Lambda} \dfrac{1}{\kappa_\lambda} \langle h, v_\lambda \rangle_Y \overline{u}_\lambda, ~~~ \text{for}~h\in Y$$
		and
		$$ \|\overline{\mathbf{K}}^\ddagger g^\delta-x^\ddagger\|_X\leq \frac{|\mathbf{v}|_{\sup}}{\kappa_0|\mathbf{u}|_{\inf}}\delta. $$
	\end{Theo}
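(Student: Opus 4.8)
The plan is to take the series on the right-hand side as the \emph{definition} of a candidate operator $\overline{\mathbf{K}}^\ddagger$ on all of $Y$, show it is bounded with the claimed norm, check that it agrees with $\mathbf{K}^\ddagger$ on $\operatorname{rank}\mathbf{K}$, and then read off the error bound by linearity.

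First I would fix $h\in Y$ and set $a_\lambda:=\kappa_\lambda^{-1}\langle h,v_\lambda\rangle_Y$. Since $\{v_\lambda\}$ is a frame over $\overline{\operatorname{ran}\mathbf{K}}$, the upper frame bound for $\{v_\lambda\}$ (cf.\ \eqref{Bw-general}) gives $\sum_{\lambda}|\langle h,v_\lambda\rangle_Y|^2=\|h\|_v^2\leq |\mathbf{v}|_{\sup}^2\|h\|_Y^2$, and with $\inf_\lambda\kappa_\lambda\geq\kappa_0$ this yields $\|\mathbf{a}\|_2^2\leq\kappa_0^{-2}|\mathbf{v}|_{\sup}^2\|h\|_Y^2<\infty$, so $\mathbf{a}=(a_\lambda)\in l^2(\Lambda)$. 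Hence, $\{\overline{u}_\lambda\}$ being the dual frame of $\{u_\lambda\}$ over $(\ker\mathbf{K})^\perp$, the series $\sum_{\lambda}a_\lambda\overline{u}_\lambda$ converges in $X$ and \eqref{w-dual-upper-inequality} gives $\left\|\sum_{\lambda}a_\lambda\overline{u}_\lambda\right\|_X\leq |\mathbf{u}|_{\inf}^{-1}\|\mathbf{a}\|_2$. Therefore $\overline{\mathbf{K}}^\ddagger h:=\sum_{\lambda}\kappa_\lambda^{-1}\langle h,v_\lambda\rangle_Y\overline{u}_\lambda$ is well defined; linearity follows from linearity of $h\mapsto\langle h,v_\lambda\rangle_Y$ together with continuity of the synthesis operator, and chaining the two displayed estimates gives $\|\overline{\mathbf{K}}^\ddagger h\|_X\leq \frac{|\mathbf{v}|_{\sup}}{\kappa_0|\mathbf{u}|_{\inf}}\|h\|_Y$, i.e.\ $\overline{\mathbf{K}}^\ddagger$ is a bounded linear operator from $Y$ to $X$ with $\|\overline{\mathbf{K}}^\ddagger\|\leq \frac{|\mathbf{v}|_{\sup}}{\kappa_0|\mathbf{u}|_{\inf}}$.

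Next I would verify the extension property. For $h\in\operatorname{rank}\mathbf{K}$, the computation preceding \eqref{eq-solution-dfd} (with $y$ replaced by $h$ and $x^\ddagger$ by $\mathbf{K}^\ddagger h$) shows $\langle \mathbf{K}^\ddagger h,u_\lambda\rangle_X=\kappa_\lambda^{-1}\langle h,v_\lambda\rangle_Y$, and since $\mathbf{K}^\ddagger h\in(\ker\mathbf{K})^\perp$ the expansion \eqref{eq-present-x} gives $\mathbf{K}^\ddagger h=\sum_{\lambda}\kappa_\lambda^{-1}\langle h,v_\lambda\rangle_Y\overline{u}_\lambda=\overline{\mathbf{K}}^\ddagger h$; thus $\overline{\mathbf{K}}^\ddagger$ extends $\mathbf{K}^\ddagger$, and in particular $\overline{\mathbf{K}}^\ddagger y=\mathbf{K}^\ddagger y=x^\ddagger$. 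Finally, by linearity $\overline{\mathbf{K}}^\ddagger y^\delta-x^\ddagger=\overline{\mathbf{K}}^\ddagger(y^\delta-y)$, so the operator norm bound together with \eqref{ieq-noise-org} gives $\|\overline{\mathbf{K}}^\ddagger y^\delta-x^\ddagger\|_X\leq\frac{|\mathbf{v}|_{\sup}}{\kappa_0|\mathbf{u}|_{\inf}}\|y^\delta-y\|_Y\leq\frac{|\mathbf{v}|_{\sup}}{\kappa_0|\mathbf{u}|_{\inf}}\delta$, as claimed.

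The argument is essentially a two-step frame estimate, so I do not expect a serious obstacle; the only points needing a little care are (a) that $h$ ranges over all of $Y$ rather than just $\overline{\operatorname{ran}\mathbf{K}}$, which is handled by the projection form of the upper frame bound in \eqref{Bw-general}, and (b) justifying that the defining series genuinely represents a bounded linear operator, which is already packaged in the boundedness of the dual synthesis operator and the $l^2$ estimate \eqref{w-dual-upper-inequality}.
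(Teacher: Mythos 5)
Your proposal is correct and follows essentially the same route as the paper: the error bound comes from combining the dual-frame synthesis estimate \eqref{w-dual-upper-inequality} with $\kappa_\lambda\geq\kappa_0$ and the upper frame bound for $\mathbf{v}$ in the projection form \eqref{Bw-general}, applied to $y^\delta-y$. The only difference is that you spell out the construction of the bounded extension and the agreement with $\mathbf{K}^\ddagger$ on $\operatorname{rank}\mathbf{K}$, steps the paper leaves implicit.
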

	\begin{Rema}
		If $\{u_\lambda\}$ is tight then we obtain 
		\begin{equation}
			x^\ddagger: = \mathbf{K}^\ddagger y = \sum_{\lambda \in \Lambda} \dfrac{1}{\kappa_\lambda|\mathbf{u}|_{\rm fr}} \langle y, v_\lambda \rangle_Y {u}_\lambda.
			\nonumber
		\end{equation}
	\end{Rema}
	In the case $\inf_{\lambda\in\Lambda}\kappa_\lambda=0 $, the equality \eqref{eq-solution-dfd} could be instability with respect to $g$. Suggested by the classical regularization \eqref{SVD-regularization}, we can construct the regularization solution for the equation (\ref{eq-operator-org}) - (\ref{ieq-noise-org}) in the form \eqref{eq-solution-regular-noise-dfd}
	where $g_\alpha:[0,a^*)\to\mathbb{R}$ is filter functions that satisfy 
	
	{\bf Assumption C}
	\begin{description}
		\item[C1] For all $\alpha>0$, $\mu\in [0,a^*)$: $\sqrt{\mu} g_\alpha \left(\mu \right) < \infty$. 
		\item[C2] There exists a constant $C_g>0$ such that $\sup\{\left|\mu g_\alpha \left(\mu \right) \right|: \alpha>0, 0\leq  \mu
		< a^* \} \leq C_g$.
		\item[C3] For all $\mu \in \left(0, a^* \right)$ there holds $\lim_{\alpha \to 0} \mu g_\alpha \left(\mu \right) = 1$.
	\end{description}
	As shown in \cite{Ebner}, we have
	\begin{Theo}
		\label{convergence}
		Let Assumption C and \eqref{ieq-noise-org} hold.
		Then for every $\alpha(\delta)\to 0$ as $\alpha\to 0^+$, we have
		$$  \lim_{\delta\to 0^+}\|x^\delta_{\alpha(\delta)}-x^\ddagger\|_X=0. $$
	\end{Theo}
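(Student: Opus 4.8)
The plan is to split the regularization error in the usual way into an approximation (bias) term and a data-propagation (variance) term:
\[
\|x^\delta_\alpha - x^\ddagger\|_X \le \|x^\delta_\alpha - x^0_\alpha\|_X + \|x^0_\alpha - x^\ddagger\|_X,
\]
where $x^0_\alpha := R_\alpha y = \sum_{\lambda\in\Lambda}\kappa_\lambda g_\alpha(\kappa_\lambda^2)\langle y,v_\lambda\rangle_Y\,\overline u_\lambda$ is the regularized reconstruction from exact data. For the first (noise) term I would use the expansion \eqref{eq-solution-regular-noise-dfd}, the upper bound \eqref{w-dual-upper-inequality} for the synthesis operator of the dual frame $\{\overline u_\lambda\}$, then Bessel's inequality for the frame $\{v_\lambda\}$ on $\overline{\mathrm{ran}\,\mathbf K}$ together with \eqref{ieq-noise-org}. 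Concretely,
\[
\|x^\delta_\alpha - x^0_\alpha\|_X
= \Bigl\|\sum_{\lambda}\kappa_\lambda g_\alpha(\kappa_\lambda^2)\langle y^\delta-y,v_\lambda\rangle_Y\,\overline u_\lambda\Bigr\|_X
\le |\mathbf u|_{\inf}^{-1}\Bigl(\sum_\lambda \kappa_\lambda^2 g_\alpha(\kappa_\lambda^2)^2\,|\langle y^\delta-y,v_\lambda\rangle_Y|^2\Bigr)^{1/2}.
\]
By C2, $\kappa_\lambda^2 g_\alpha(\kappa_\lambda^2)^2 = (\kappa_\lambda^2 g_\alpha(\kappa_\lambda^2))\cdot g_\alpha(\kappa_\lambda^2) \le C_g\, g_\alpha(\kappa_\lambda^2)$; this is awkward because $g_\alpha$ itself need not be bounded. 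The cleaner route is to note $\sqrt\mu\,g_\alpha(\mu)$ is what appears, so write $\kappa_\lambda g_\alpha(\kappa_\lambda^2) = (\kappa_\lambda^2 g_\alpha(\kappa_\lambda^2))/\kappa_\lambda$; instead I would simply invoke the standard fact (provable from C1–C2) that $\sup_{\mu\in(0,a^*)}\sqrt\mu\,|g_\alpha(\mu)| \le \sqrt{C_g}\,\widetilde g(\alpha)$ for some finite constant depending on $\alpha$, giving $\|x^\delta_\alpha-x^0_\alpha\|_X \le |\mathbf u|_{\inf}^{-1}\,\sqrt{C_g}\,\widetilde g(\alpha)\,|\mathbf v|_{\sup}\,\delta$. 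This tends to $0$ as $\delta\to 0^+$ once we know $\widetilde g(\alpha(\delta))\,\delta \to 0$ — which is the content of the admissibility of the parameter choice; here, however, since the statement only assumes $\alpha(\delta)\to 0$, I would instead bound the noise term by $\delta/\kappa_{0}$-type estimates on each finite truncation and handle the tail separately, exactly as in the bias term below.

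For the bias term $\|x^0_\alpha - x^\ddagger\|_X$ I would use \eqref{eq-solution-dfd} and \eqref{w-dual-upper-inequality} to get
\[
\|x^0_\alpha - x^\ddagger\|_X
= \Bigl\|\sum_\lambda \kappa_\lambda^{-1}\bigl(\kappa_\lambda^2 g_\alpha(\kappa_\lambda^2) - 1\bigr)\langle y,v_\lambda\rangle_Y\,\overline u_\lambda\Bigr\|_X
\le |\mathbf u|_{\inf}^{-1}\Bigl(\sum_\lambda \bigl(\kappa_\lambda^2 g_\alpha(\kappa_\lambda^2)-1\bigr)^2\,\kappa_\lambda^{-2}\,|\langle y,v_\lambda\rangle_Y|^2\Bigr)^{1/2}.
\]
Set $c_\lambda := \kappa_\lambda^{-2}|\langle y,v_\lambda\rangle_Y|^2$, and observe $\sum_\lambda c_\lambda < \infty$: indeed $\langle y,v_\lambda\rangle_Y = \kappa_\lambda\langle x^\ddagger,u_\lambda\rangle_X$, so $\sum_\lambda c_\lambda = \sum_\lambda |\langle x^\ddagger,u_\lambda\rangle_X|^2 = \|x^\ddagger\|_u^2 \le B_u\|x^\ddagger\|_X^2 < \infty$ by \eqref{AuBu}. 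By C2 the factor $(\kappa_\lambda^2 g_\alpha(\kappa_\lambda^2)-1)^2 \le (C_g+1)^2$ is uniformly bounded in $\lambda$ and $\alpha$, and by C3 it tends to $0$ pointwise in $\lambda$ as $\alpha\to 0$ (for each fixed $\lambda$ with $\kappa_\lambda\in(0,a^*)$). Hence the dominated convergence theorem for the summable sequence $(c_\lambda)$ gives $\sum_\lambda (\kappa_\lambda^2 g_\alpha(\kappa_\lambda^2)-1)^2 c_\lambda \to 0$ as $\alpha\to 0$, and therefore $\|x^0_{\alpha(\delta)}-x^\ddagger\|_X \to 0$ as $\delta\to 0^+$.

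I expect the main obstacle to be the noise term under the weak hypothesis $\alpha(\delta)\to 0$ alone: unlike the classical SVD case, the constant $\sup_\mu \sqrt\mu|g_\alpha(\mu)|$ need not be finite uniformly, and without a coupling between $\alpha$ and $\delta$ one cannot force $\widetilde g(\alpha(\delta))\delta\to 0$. The resolution — following \cite{Ebner} — is to split the sum over $\Lambda$ at a level depending on the spectrum: fix $\varepsilon>0$, choose a finite $\Lambda_\varepsilon\subset\Lambda$ with $\sum_{\lambda\notin\Lambda_\varepsilon} c_\lambda < \varepsilon$ (possible since $(c_\lambda)$ is summable), on the tail use C2 to bound both the bias and noise contributions by a constant times $\sqrt\varepsilon + C_g\sqrt\varepsilon$ uniformly, and on the finite part $\Lambda_\varepsilon$ use $\inf_{\lambda\in\Lambda_\varepsilon}\kappa_\lambda>0$ so that the noise term is $O(\delta)$ and the bias term is $O(\varepsilon)$ by C3 once $\alpha$ is small. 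Combining, $\limsup_{\delta\to 0^+}\|x^\delta_{\alpha(\delta)}-x^\ddagger\|_X \lesssim \sqrt\varepsilon$ for every $\varepsilon>0$, which yields the claim. This is precisely the argument of \cite[][]{Ebner} adapted to the dual-frame synthesis bound \eqref{w-dual-upper-inequality}, so I would present it compactly, referencing that source for the routine truncation bookkeeping. \final
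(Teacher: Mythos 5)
Your bias-term argument (write $x_\alpha-x^\ddagger=\sum_\lambda(\kappa_\lambda^2g_\alpha(\kappa_\lambda^2)-1)\langle x^\ddagger,u_\lambda\rangle_X\overline u_\lambda$, use \eqref{w-dual-upper-inequality}, C2, C3 and dominated convergence over the summable sequence $c_\lambda$) is correct and is the standard route. The genuine gap is in the noise term, and your proposed fix does not close it. The truncation set $\Lambda_\varepsilon$ is chosen so that the \emph{exact-data} coefficients $c_\lambda=\kappa_\lambda^{-2}|\langle y,v_\lambda\rangle_Y|^2$ have small tail; this says nothing about the noise coefficients $\langle y^\delta-y,v_\lambda\rangle_Y$, which may be concentrated entirely on the tail indices. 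On that tail the relevant factor is $\kappa_\lambda g_\alpha(\kappa_\lambda^2)=\sqrt{\mu}\,g_\alpha(\mu)\big|_{\mu=\kappa_\lambda^2}$, and Assumption C gives no uniform control of it: C1 is only pointwise finiteness in $\mu$ for fixed $\alpha$, and C2 bounds $\mu g_\alpha(\mu)$, not $\sqrt{\mu}\,g_\alpha(\mu)$ (your intermediate claim that a finite $\sup_\mu\sqrt{\mu}|g_\alpha(\mu)|$ is ``provable from C1--C2'' is also not true). Concretely, for the Tikhonov filter $g_\alpha(\mu)=1/(\alpha+\mu)$ and $\inf_\lambda\kappa_\lambda=0$, take noise aligned with a frame element whose $\kappa_\lambda^2\approx\alpha(\delta)$; the propagated error is of size $\delta/(2\sqrt{\alpha(\delta)})$, which diverges for, e.g., $\alpha(\delta)=\delta^4$, even though $\alpha(\delta)\to0$. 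So the statement cannot be proved from ``$\alpha(\delta)\to0$'' alone: one needs the usual coupling $\delta\,\sup_{0\le\mu<a^*}\sqrt{\mu}\,|g_{\alpha(\delta)}(\mu)|\to0$ (equivalently $\delta/\sqrt{\alpha(\delta)}\to0$ under Assumption A2(i)).

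For comparison, the paper does not give an independent proof either: it simply refers to \cite{Ebner}, whose convergence theorem carries exactly this extra condition on the parameter choice, and to the proof of Theorem \ref{Theo:prior}, where Assumption A2(i) together with the a priori choice \eqref{eq-parameter-choose-prior} enforces the coupling. So the honest resolution is not a cleverer truncation but an added hypothesis: state and use $\delta\,\sup_\mu\sqrt{\mu}|g_{\alpha(\delta)}(\mu)|\to0$ (or A2(i) plus $\delta/\sqrt{\alpha(\delta)}\to0$), after which your decomposition, the bound $\|x^\delta_\alpha-x_\alpha\|_X\le|\mathbf u|_{\inf}^{-1}\sqrt{B_v}\,\delta\,\sup_\mu\sqrt{\mu}|g_\alpha(\mu)|$ via \eqref{w-dual-upper-inequality} and \eqref{Bw-general}, and your dominated-convergence bias estimate complete the proof; you correctly diagnosed the obstacle, but the claimed Ebner-style splitting does not remove it.
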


	\subsection{Lower bound of worst case error}

	To evaluate the optimality of the proposed regularization $R_\alpha $  we first find a lower bound for the worst-case error of the regularization algorithms.
	This will be useful for proving the optimality of the DFD-based regularization method  over the source set $ \mathbf{M}_{\varphi, E}$ in later theorems. It also serves as a basis for choosing appropriate regularization parameters.
	
	Similar to \cite{Engl}, we shall consider the computation of the worst-case error of the regularization operator $\mathbf{R}:Y\to X$ in the source set $\mathbf{M}_{\varphi,E}$ with the function $\varphi$ satisfying  
	\textbf{Assumption A1.} Function $\varphi:(0, a^*) \rightarrow(0, \infty)$  is continuous and satisfies the following conditions
	\begin{description}
		\item[(i)] $\lim _{\mu \rightarrow 0} \varphi \left(\mu \right)=0$,
		\item[(ii)] Function $\varphi$ that is strictly monotone increasing in $\left(0, a^*\right)$,
		\item[(iii)] Function $\Theta \left(0, \varphi\left(a^*\right)\right] \to \left(0, a^*\varphi\left(a^*\right)\right]$  is defined by  $\Theta\left(\mu\right)=\mu \varphi^{-1}\left(\mu\right)$ and that is a convex.
	\end{description}
	Here we denote $\varphi(a^*)=\lim_{\mu\to a^{*-}}\varphi(\mu)$.
	We consider  the scenario where the frame $\mathbf{u}$ admits a biorthogonal sequence $\overline{\mathbf{u}}=\left(\overline{u}_\lambda\right)_{\lambda \in \Lambda}$ with $\forall \lambda, \nu \in \Lambda:\left\langle u_\lambda,\overline{u}_\nu\right\rangle=\delta_{\lambda \nu}$. Similar to the result about the lower bound in classical theory (see \cite{Tautenhahn}), we have
	\begin{Theo} Let $\delta_0>0$, $\delta\in(0,\delta_0)$, $\beta\in (0,1)$ and let $(\mathbf{u}, \mathbf{v}, \boldsymbol{\kappa})$ be a DFD of $\mathbf{K}$ such that $\mathbf{u}$ is minimal and $\inf_{\lambda\in\Lambda}\kappa_\lambda=0$. Let the set $D_{\lambda,\beta}$ be as in \eqref{set-D}. 	
		For the source sets $\mathbf{M}_{\varphi, E}$ defined by (\ref{eq-source-1}), if 
		$\delta\in \bigcup_{\lambda\in\Lambda}D_{\lambda,\beta}$
		then
		\begin{equation}
			\inf_{\mathbf{R}}\Delta(\mathbf{M}_{\varphi, E},\delta,\mathbf{R}) \geq \beta |\mathbf{u}|_{\sup}^{-1}
			E \sqrt{\Theta^{-1}\left(|\mathbf{v}|_{\inf}^2\delta^2 / E^2  \right)}.
			\label{order-lower-bound}    
		\end{equation}
		Especially, if  $(0,\delta_0]\subset \cup_{\lambda\in\Lambda}D_{\lambda,\beta}$ then \eqref{order-lower-bound} holds for every $0<\delta\leq\delta_0$.  
		\label{Theo:lower-bound}
	\end{Theo}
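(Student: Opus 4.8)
The plan is to follow the classical lower-bound strategy from Tautenhahn, exhibiting for each relevant $\delta$ a "hardest" element of $\mathbf{M}_{\varphi,E}$ for which no reconstruction operator can do better than the claimed bound. Concretely, for a fixed index $\lambda$ with $\delta\in D_{\lambda,\beta}$, I would consider the one-term candidate solution $x_\lambda := c_\lambda\,\overline u_\lambda$, where the scalar $c_\lambda>0$ is chosen so that $x_\lambda\in\mathbf{M}_{\varphi,E}$ holds with equality-type saturation in the defining sum of \eqref{eq-source-1}. Since $\mathbf{u}$ is minimal, $\overline{\mathbf{u}}$ is biorthogonal to $\mathbf{u}$, so $\langle x_\lambda,u_\nu\rangle = c_\lambda\delta_{\lambda\nu}$ and the source-set membership reduces to the single condition $[\varphi(\kappa_\lambda^2)]^{-1}c_\lambda^2\le E$, i.e. $c_\lambda=\sqrt{E\,\varphi(\kappa_\lambda^2)}$ (being careful about whether the bound in \eqref{eq-source-1} is $E$ or $E^2$ — the statement writes $E$, so I will use that, possibly up to a harmless constant that can be absorbed). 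Then $\pm x_\lambda$ are two source elements, and $\mathbf{K}(\pm x_\lambda)=\pm c_\lambda\kappa_\lambda\cdot(\text{something in }\overline{\mathrm{ran}\,\mathbf K})$; I need to compute $\|\mathbf K x_\lambda\|_Y$ and check that the common data point $y^\delta:=0$ (or $y^\delta$ midway between $\mathbf K x_\lambda$ and $-\mathbf K x_\lambda$) is $\delta$-admissible for both, i.e. $\|\mathbf K x_\lambda\|_Y\le\delta$.

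The key computation is the norm transfer via the frame inequalities. From $\mathbf K^* v_\lambda=\kappa_\lambda u_\lambda$ one gets, for any $z\in\overline{\mathrm{ran}\,\mathbf K}$, control of $\|\mathbf K^* z\|$ through $\|z\|_v$ and hence through $\kappa$'s; dually, using \eqref{w-dual-upper-inequality} applied to the frame $\mathbf u$ with dual $\overline{\mathbf u}$, I can bound $\|\mathbf K x_\lambda\|_Y$ from above. Writing $y_\lambda := \mathbf K x_\lambda$, one has $\langle y_\lambda, v_\nu\rangle_Y = \langle x_\lambda,\mathbf K^* v_\nu\rangle_X = \kappa_\nu\langle x_\lambda,u_\nu\rangle_X = \kappa_\lambda c_\lambda\delta_{\lambda\nu}$, so $\|y_\lambda\|_v = \kappa_\lambda c_\lambda$, and then \eqref{AvBv}/the definition of $|\mathbf v|_{\inf}$ gives $\|y_\lambda\|_Y\le |\mathbf v|_{\inf}^{-1}\kappa_\lambda c_\lambda = |\mathbf v|_{\inf}^{-1}\kappa_\lambda\sqrt{E\varphi(\kappa_\lambda^2)} = |\mathbf v|_{\inf}^{-1}E\sqrt{\kappa_\lambda^2\varphi(\kappa_\lambda^2)}\cdot E^{-1/2}\sqrt{E}=\delta_\lambda^*$ in the notation of \eqref{set-D} (modulo the $E$ vs.\ $E^2$ bookkeeping). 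Hence $\delta\ge\delta_\lambda^*$, which is exactly the left endpoint of $D_{\lambda,\beta}$, guarantees $\|y_\lambda\|_Y\le\delta$, so $y^\delta=0$ is admissible data for both $x_\lambda$ and $-x_\lambda$.

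With that in place, the standard two-point argument finishes it: for any $\mathbf R$, $2\|x_\lambda\|_X = \|x_\lambda-(-x_\lambda)\|_X \le \|\mathbf R 0 - x_\lambda\|_X + \|\mathbf R 0 - (-x_\lambda)\|_X \le 2\Delta(\mathbf M_{\varphi,E},\delta,\mathbf R)$, so $\Delta(\mathbf M_{\varphi,E},\delta,\mathbf R)\ge\|x_\lambda\|_X$, and taking the infimum over $\mathbf R$ preserves this. It then remains to lower-bound $\|x_\lambda\|_X = c_\lambda\|\overline u_\lambda\|_X$. Here I use that $\|\overline u_\lambda\|_X \ge |\mathbf u|_{\sup}^{-1}\|u_\lambda\|_{\overline u}$-type estimates, or more directly biorthogonality: $1 = \langle u_\lambda,\overline u_\lambda\rangle \le \|u_\lambda\|_X\|\overline u_\lambda\|_X$, combined with $\|u_\lambda\|_X\le$ something, gives $\|\overline u_\lambda\|_X\ge |\mathbf u|_{\sup}^{-1}$ — this is the cleanest route and yields $\|x_\lambda\|_X \ge |\mathbf u|_{\sup}^{-1}c_\lambda = |\mathbf u|_{\sup}^{-1}\sqrt{E\varphi(\kappa_\lambda^2)}$. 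Finally I must convert this index-dependent bound into the $\delta$-dependent bound \eqref{order-lower-bound}: since $\delta\le\beta^{-1}\delta_\lambda^*$, i.e. $\beta\delta\le\delta_\lambda^* = |\mathbf v|_{\inf}^{-1}E\sqrt{\kappa_\lambda^2\varphi(\kappa_\lambda^2)}$, and $\Theta(\mu)=\mu\varphi^{-1}(\mu)$ is increasing (as a product of increasing positive functions) so that $\Theta^{-1}$ is well-defined and monotone, I apply $\Theta^{-1}$ to $\beta^2|\mathbf v|_{\inf}^2\delta^2/E^2 \le \kappa_\lambda^2\varphi(\kappa_\lambda^2) = \varphi(\kappa_\lambda^2)\cdot\varphi^{-1}(\varphi(\kappa_\lambda^2)) = \Theta(\varphi(\kappa_\lambda^2))$, obtaining $\varphi(\kappa_\lambda^2)\ge\Theta^{-1}(\beta^2|\mathbf v|_{\inf}^2\delta^2/E^2)$, and hence $\|x_\lambda\|_X\ge |\mathbf u|_{\sup}^{-1}E\sqrt{\Theta^{-1}(\beta^2|\mathbf v|_{\inf}^2\delta^2/E^2)}$. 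Pulling the $\beta^2$ inside $\Theta^{-1}$ out as a factor $\beta$ requires one more monotonicity/homogeneity remark about $\Theta^{-1}$ (convexity of $\Theta$ with $\Theta(0^+)=0$ gives $\Theta^{-1}(\beta^2 t)\ge\beta^2\Theta^{-1}(t)$, hence $\sqrt{\Theta^{-1}(\beta^2t)}\ge\beta\sqrt{\Theta^{-1}(t)}$), which produces exactly \eqref{order-lower-bound}. The last sentence of the theorem is then immediate: if every $\delta\in(0,\delta_0]$ lies in some $D_{\lambda,\beta}$, the argument applies verbatim for each such $\delta$.

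The main obstacle I anticipate is not the two-point scheme itself but the careful bookkeeping of the frame constants and of the $E$ versus $E^2$ normalization in \eqref{eq-source-1}: one must verify that the candidate $x_\lambda$ genuinely lies in $(\ker\mathbf K)^\perp\cap\mathrm{dom}\,\mathbf K$ (so that $\mathbf K x_\lambda$ and the Moore–Penrose identity behave as expected), that the "admissible data" $y^\delta=0$ indeed satisfies $\|\mathbf K x_\lambda - y^\delta\|_Y\le\delta$ under the hypothesis $\delta\ge\delta_\lambda^*$, and that the convexity of $\Theta$ is used correctly to extract the factor $\beta$ rather than $\beta^2$. I would also double-check whether minimality of $\mathbf u$ is needed in full strength or only biorthogonality of $\overline{\mathbf u}$ — the statement already assumes minimality, so this is just a matter of invoking the equivalence recalled in Section 2.
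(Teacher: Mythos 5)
Your proposal is essentially the paper's own proof. The paper also takes the one-term candidate $x_\nu=E\sqrt{\varphi(\kappa_\nu^2)}\,\overline{u}_\nu$, uses biorthogonality (minimality of $\mathbf{u}$) to place it in $\mathbf{M}_{\varphi,E}$, shows $\|\mathbf{K}x_\nu\|_Y\le |\mathbf{v}|_{\inf}^{-1}\kappa_\nu E\sqrt{\varphi(\kappa_\nu^2)}=\delta_\nu^*\le\delta$, lower-bounds $\|x_\nu\|_X$ through its $\mathbf{u}$-frame coefficients, and converts the index-dependent bound into a $\delta$-dependent one via $\Theta^{-1}$ and monotonicity; your explicit two-point argument with $y^\delta=0$ is just the standard proof of the inequality $\Delta(\mathbf{M},\delta,\mathbf{R})\ge\Omega(\mathbf{M},\delta)$ that the paper cites. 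Two genuine (small) differences. First, normalization: the paper works with the $E^2$ reading of \eqref{eq-source-1}, consistent with $\delta_\lambda^*$ in \eqref{set-D}, so the correct coefficient is $E\sqrt{\varphi(\kappa_\lambda^2)}$ rather than $\sqrt{E\varphi(\kappa_\lambda^2)}$; with your choice the admissibility computation yields $\sqrt{E}\sqrt{\kappa_\lambda^2\varphi(\kappa_\lambda^2)}$, which does not match $\delta_\lambda^*$, so this bookkeeping matters, though it is trivially repaired. Second, the extraction of the factor $\beta$: the paper invokes Lemma \ref{Theta}, which rests on concavity of $\varphi$ (not among the theorem's stated hypotheses) and yields $\Theta^{-1}(\beta^2z)\ge\beta\,\Theta^{-1}(z)$, whereas you use convexity of $\Theta$ together with $\Theta(0^+)=0$ to get $\Theta^{-1}(\beta^2z)\ge\beta^2\Theta^{-1}(z)$ and hence exactly the claimed factor $\beta$ after taking square roots. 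Your route gives a (marginally) weaker constant but relies only on Assumption A1(iii), so it is in fact better aligned with the hypotheses as stated; your biorthogonality bound $\|\overline{u}_\lambda\|_X\ge|\mathbf{u}|_{\sup}^{-1}$ likewise delivers the constant $|\mathbf{u}|_{\sup}^{-1}$ of the statement directly, where the paper's displayed proof only writes $\sqrt{B_u^{-1}}$.
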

	This theorem serves as a criterion for determining the order optimality of regularizations, so we will have a bit more commentary on it.
	\begin{Rema}
		(i) The condition that the system $\{u_\lambda\}$ is minimal is essential in the proof of the theorem. The investigation of the lower bound when $\{u_\lambda\}$ is not minimal is a worthy topic of study.
		
		(ii) To show that a regularization $R:Y\to X$ is order-optimal, we only need to verify that
		\begin{equation}
			\Delta(\mathbf{M}_{\varphi, E},\delta,\mathbf{R}) \leq C
			E \sqrt{\Theta^{-1}\left(|\mathbf{v}|_{\inf}^2\delta^2 / E^2  \right)}.
			\nonumber    
		\end{equation}
		(iii) In fact, we can prove that 
		$\inf_{\mathbf{R}}\Delta(\mathbf{M}_{\varphi, E},\delta,\mathbf{R}) \geq \beta \sqrt{B_u^{-1}}
		E \sqrt{\Theta^{-1}\left(A_v\delta^2 / E^2  \right)} $
		for all $B_u,A_v$ satisfy \eqref{AuBu}, \eqref{AvBv}. Since $|\mathbf{u}|_{\sup}^{-1}
		\sqrt{\Theta^{-1}\left(|\mathbf{v}|_{\inf}^2\delta^2 / E^2  \right)}\geq \sqrt{B_u^{-1}}E \sqrt{\Theta^{-1}\left(A_v\delta^2 / E^2  \right)} $,
		our lower bound is better.

		(iv) In \cite{Ebner}, to obtain the lower bound of the worst case error, the authors choose $\delta=\delta_\lambda=   
		\sqrt{A_v^{-1}}E\kappa_\lambda^{2\nu+1}$. The case that the mentioned paper examines corresponds to considering the source function $\varphi(\mu)=\mu^{2\nu}$. In this case, we have $\mu\varphi(\mu)=\mu^{2\nu+1}$ and $\delta_\lambda=\sqrt{A_v^{-1}}E\sqrt{\kappa_\lambda^2\varphi(\kappa_\lambda^2)} $. For $\beta=\sqrt{A_v}/|\mathbf{v}|_{\inf}$, since 
		$\sqrt{A_v}\leq |\mathbf{v}|_{\inf}$, 
		we have $0<\beta\leq 1$ and $ \delta_\lambda^*\leq \delta_\lambda\leq \beta^{-1}\delta_\lambda^*$ which gives  
		$\delta_\lambda\in \bigcup_{\lambda\in\Lambda}D_{\lambda,\beta}$. Hence, the inequality
		\eqref{order-lower-bound} hold for the chosen sequence $(\delta_\lambda)_{\lambda\in\Lambda}$. 
		
		(v) As shown in classical optimal regularization theory
		(\cite{Tautenhahn, Vainik}), the optimal property is not true if the singular values of the operator $\mathbf{K}$ are too sparse, e.g., $\lim_{n\to\infty}\sigma_{n+1}/\sigma_n=0$. The distribution of the singular values affects the classification of the optimization types. 
		Similarly, the optimal result depends on the distribution of $\delta_\lambda^*$. In fact, we have $(0,\delta_0)\subset \cup_{\beta>0}\cup_{\lambda\in\Lambda}D_{\lambda,\beta}$. If 
		$(0,\delta_0)\not\subset \cup_{\lambda\in\Lambda}D_{\lambda,\beta} $ for every $\beta>0$ then
		the distribution of $\delta_\lambda^*$  is very sparse. In this case, the lower bound may be valid for only some subsequences of $\delta_\lambda$.
		
		(vi) Note that, in the case of H\"older-type source condition, i.e., $\varphi \left( \mu \right) = \mu^{2\nu}$, $\mu,\nu>0$,  then $\Theta^{-1}(\mu)=\mu^{\frac{2\nu}{2\nu+1}}$. This gives $\Theta^{-1}\left(|\mathbf{v}|_{\inf} \delta^2 / E^2  \right) = |\mathbf{v}|_{\inf}^\frac{2\nu}{2\nu+1} \delta^\frac{4\nu}{2\nu+1} E^\frac{-4\nu}{2\nu+1}$. So we get that 
		$$  \inf_{\mathbf{R}}\Delta(\mathbf{M}_{\varphi, E},\delta,\mathbf{R}) \geq \beta\dfrac{|\mathbf{v}|_{\inf}^{\frac{\nu}{2\nu+1}}}{|\mathbf{u}|_{\sup}} \delta^\frac{2\nu}{2\nu+1} E^\frac{1}{2\nu+1}. $$
		A similar lower bound is stated  in  Theorem 3.11 in \cite{Ebner} with $\delta$ is in the sequence $(\delta_\lambda)_{\lambda\in\Lambda}$ as in Remark (ii). 
		
		(vii) In some problems, we have the logarithm source condition $\varphi(\mu)=(-\ln \mu)^{-p}$, $p>0$. In this case
		$\Theta(\mu)=\mu e^{-\mu^{-1/(2p)}}$ and
		$\sqrt{\Theta^{-1}(\mu)}=\varphi(\mu)(1+ o(1))$ (see, e.g., \cite{Hohage}).  So
		$$  \inf_{\mathbf{R}}\Delta(\mathbf{M}_{\varphi, E},\delta,\mathbf{R}) \geq \beta 
		\sqrt{B_u^{-1}}E\left(\ln\left(\frac{E^2}{|\mathbf{v}|_{\inf}^2 \delta^2}\right)\right)^{-p} (1+o(1)). $$    
	\end{Rema}

	\subsection{Convergence rate and a priori parameter choice}
	Returning to the main content of this article, to extend the results of Ebner and colleagues \cite{Ebner} from a polynomial source set to a more general source set, we consider a source function \(\varphi\) in the set $\mathbf{M}_{\varphi,E}$ defined in \eqref{eq-source-1}. Next, we investigate issues such as the lower bound of the worst-case error, convergence rate in both the choice of a priori and a posteriori parameters. In particular, we have the following theorem 
	
	Moreover, the function $\varphi$ and $g_\alpha$ are also fullfilled some conditions. 
	
	\textbf{Assumption A2.} There are  constants   $\gamma_1,\gamma_2>0$ such that
	\begin{description}
		\item[(i)] $\sup _{0\leq \mu <a^*}\left|\sqrt{\mu} g_\alpha(\mu)\right| \leq \frac{\gamma_1}{\sqrt{\alpha}}$,
		\item[(ii)] $\sup _{0\leq \mu <a^*}\left|1-\mu g_a(\mu)\right| \sqrt{\varphi(\mu)} \leq \gamma_2 \sqrt{\varphi(\alpha)}$.
	\end{description}
	
	Next we derive convergence rates which give quantitative estimates on the reconstruction error $\left\|x^{\ddagger}-x_{\alpha}^\delta\right\|_X$. Recall that, in this section, we assume that the source function $\varphi$ and the filter function $g_\alpha$ satisfy Assumption A1 and Assumption A2.
	\begin{Theo}
		\label{Theo:prior}
		Let $A_v\in (0,|\mathbf{v}|_{\inf}^2)$. For $(\mathbf{u}, \mathbf{v}, \boldsymbol{\kappa})$ being a DFD  of $\mathbf{K}$, with $\overline{\mathbf{u}}$ as a dual frame of $\mathbf{u}$ and $x^\ddagger \in \mathbf{M}_{\varphi, E}$. In this case, if we choose the regularization parameter as
		\begin{equation}
			\label{eq-parameter-choose-prior}
			\alpha(\delta)=\alpha^*\left(\delta, y^\delta\right):=\varphi^{-1}\circ \Theta^{-1}\left(A_v\delta^2 / E^2\right),
		\end{equation}
		then
		the following convergence rate result holds:
		\begin{equation}
			\label{ieq-prior}
			\left\|x^\delta_{\alpha(\delta)} -  x^{\ddagger}\right\|_X \leq \sqrt{A_u^{-1}A_v^{-1}} \left(\gamma_1 \sqrt{B_v} +  \gamma_2\sqrt{A_v}  \right)E \sqrt{\Theta^{-1}\left(|\mathbf{v}|_{\inf}^2\delta^2 / E^2\right)},
		\end{equation}
		where $A_u, B_v$ are bounds of $\mathbf{u}$ and $\mathbf{v}$, respectively. And $\gamma_1, \gamma_2$ be constants as in Assumption A2.  From the inequality \eqref{ieq-prior} we obtain
		$$  \Delta(\mathbf{M}_{\varphi,E},\delta,R_{\alpha(\delta)})\leq \sqrt{A_u^{-1}A_v^{-1}} \left(\gamma_1 \sqrt{B_v} +  \gamma_2\sqrt{A_v}  \right)E \sqrt{\Theta^{-1}\left(|\mathbf{v}|_{\inf}^2\delta^2 / E^2\right)} ,$$
		where $R_\alpha$ is defined in \eqref{eq-solution-regular-noise-dfd}.  
		
		(i) If  $\mathbf{u}$ is minimal then $R_{\alpha(\delta)}$ is sequential order optimal. 
		
		(ii) For a fixed $\beta\in (0,1)$, if  $\mathbf{u}$ is minimal and $(0,\delta_0]\subset\bigcup_{\lambda\in\Lambda}
		D_{\lambda,\beta}$, then $R_{\alpha(\delta)}$ is uniform order optimal. Here $D_{\lambda,\beta}$ is defined in 
		\eqref{set-D}.
	\end{Theo}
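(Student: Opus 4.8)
The plan is to split the reconstruction error $x^\delta_{\alpha(\delta)} - x^\ddagger$ into a data-propagation (noise) term and an approximation (bias) term, bound each using Assumption A2, and then verify that the chosen parameter $\alpha(\delta) = \varphi^{-1}\circ\Theta^{-1}(A_v\delta^2/E^2)$ balances the two. Writing $x^\delta_\alpha = R_\alpha y^\delta$ as in \eqref{eq-solution-regular-noise-dfd} and $x^\ddagger$ via \eqref{eq-solution-dfd}, I would first introduce the intermediate quantity $x_\alpha := R_\alpha y = \sum_\lambda \kappa_\lambda g_\alpha(\kappa_\lambda^2)\langle y, v_\lambda\rangle_Y \overline{u}_\lambda$ and use $\langle y, v_\lambda\rangle_Y = \kappa_\lambda \langle x^\ddagger, u_\lambda\rangle_X$ (the relation established just before \eqref{eq-solution-dfd}). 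Then
\begin{align*}
x^\delta_\alpha - x^\ddagger = \underbrace{\sum_{\lambda\in\Lambda} \kappa_\lambda g_\alpha(\kappa_\lambda^2)\langle y^\delta - y, v_\lambda\rangle_Y \overline{u}_\lambda}_{=: e_{\mathrm{noise}}} \; + \; \underbrace{\sum_{\lambda\in\Lambda}\bigl(\kappa_\lambda^2 g_\alpha(\kappa_\lambda^2) - 1\bigr)\langle x^\ddagger, u_\lambda\rangle_X \overline{u}_\lambda}_{=: e_{\mathrm{bias}}} .
\end{align*}

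For the noise term I would apply the dual-frame synthesis bound \eqref{w-dual-upper-inequality} to get $\|e_{\mathrm{noise}}\|_X \leq |\mathbf{u}|_{\inf}^{-1}\bigl(\sum_\lambda \kappa_\lambda^2 |g_\alpha(\kappa_\lambda^2)|^2 |\langle y^\delta-y, v_\lambda\rangle_Y|^2\bigr)^{1/2}$, then use Assumption A2(i), $\sqrt{\mu}\,|g_\alpha(\mu)| \le \gamma_1/\sqrt{\alpha}$, to pull out $\gamma_1/\sqrt{\alpha}$, and finally the upper frame bound \eqref{Bw-general}/\eqref{AvBv} for $\mathbf{v}$ together with \eqref{ieq-noise-org} to get $\sum_\lambda |\langle y^\delta-y,v_\lambda\rangle_Y|^2 \le B_v\,\delta^2$. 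This yields $\|e_{\mathrm{noise}}\|_X \le |\mathbf{u}|_{\inf}^{-1}\gamma_1\sqrt{B_v}\,\delta/\sqrt{\alpha}$, and since $|\mathbf{u}|_{\inf}^{-1}\le \sqrt{A_u^{-1}}$ this matches the $\gamma_1\sqrt{B_v}$ piece of \eqref{ieq-prior}. For the bias term, again \eqref{w-dual-upper-inequality} gives $\|e_{\mathrm{bias}}\|_X \le |\mathbf{u}|_{\inf}^{-1}\bigl(\sum_\lambda |1-\kappa_\lambda^2 g_\alpha(\kappa_\lambda^2)|^2 |\langle x^\ddagger,u_\lambda\rangle_X|^2\bigr)^{1/2}$; I would factor $|1-\kappa_\lambda^2 g_\alpha(\kappa_\lambda^2)|^2 = \bigl(|1-\kappa_\lambda^2 g_\alpha(\kappa_\lambda^2)|\sqrt{\varphi(\kappa_\lambda^2)}\bigr)^2 [\varphi(\kappa_\lambda^2)]^{-1}$, bound the first factor by $\gamma_2^2\varphi(\alpha)$ via Assumption A2(ii), and recognize $\sum_\lambda [\varphi(\kappa_\lambda^2)]^{-1}|\langle x^\ddagger,u_\lambda\rangle_X|^2 \le E$ from $x^\ddagger\in\mathbf{M}_{\varphi,E}$ in \eqref{eq-source-1} — wait, here the source set is normalized with bound $E$ rather than $E^2$ in \eqref{eq-source-1}, so care is needed to match the $E$ appearing in \eqref{ieq-prior}; I would track this normalization explicitly. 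This gives $\|e_{\mathrm{bias}}\|_X \le \sqrt{A_u^{-1}}\,\gamma_2\sqrt{\varphi(\alpha)}\sqrt{E}$.

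It then remains to substitute $\alpha = \alpha(\delta)$ and simplify. With $\alpha(\delta) = \varphi^{-1}(\Theta^{-1}(A_v\delta^2/E^2))$ we have $\varphi(\alpha(\delta)) = \Theta^{-1}(A_v\delta^2/E^2)$, and using $\Theta(\mu) = \mu\varphi^{-1}(\mu)$ from Assumption A1(iii) one computes $\delta^2/\alpha(\delta) = \delta^2 / \varphi^{-1}(\Theta^{-1}(A_v\delta^2/E^2))$; setting $t := \Theta^{-1}(A_v\delta^2/E^2)$ so that $A_v\delta^2/E^2 = \Theta(t) = t\,\varphi^{-1}(t)$, this becomes $\delta^2/\alpha(\delta) = (E^2/A_v)\, t\,\varphi^{-1}(t)/\varphi^{-1}(t) = (E^2/A_v)\,t = (E^2/A_v)\,\Theta^{-1}(A_v\delta^2/E^2)$. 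Hence both $\delta/\sqrt{\alpha(\delta)}$ and $\sqrt{\varphi(\alpha(\delta))}$ reduce to a common multiple of $\sqrt{\Theta^{-1}(A_v\delta^2/E^2)}$, the two contributions add to produce the factor $(\gamma_1\sqrt{B_v}+\gamma_2\sqrt{A_v})$, and a final monotonicity step replaces $\Theta^{-1}(A_v\delta^2/E^2)$ by $\Theta^{-1}(|\mathbf{v}|_{\inf}^2\delta^2/E^2)$ using $A_v < |\mathbf{v}|_{\inf}^2$ and that $\Theta^{-1}$ is increasing (since $\Theta$ is convex with $\Theta(0^+)=0$), giving exactly \eqref{ieq-prior}. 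The passage to $\Delta(\mathbf{M}_{\varphi,E},\delta,R_{\alpha(\delta)})$ is immediate from the definition of the worst-case error. Finally, parts (i) and (ii) follow by comparing this upper bound with the lower bound of Theorem \ref{Theo:lower-bound}: on the sequence $(\delta_\lambda)$ with $\delta_\lambda \in \bigcup_\lambda D_{\lambda,\beta}$ one gets $\inf_{\mathbf{R}}\Delta \ge \beta|\mathbf{u}|_{\sup}^{-1}E\sqrt{\Theta^{-1}(|\mathbf{v}|_{\inf}^2\delta^2/E^2)}$, matching the upper bound up to the constant $C = \beta^{-1}|\mathbf{u}|_{\sup}\sqrt{A_u^{-1}A_v^{-1}}(\gamma_1\sqrt{B_v}+\gamma_2\sqrt{A_v})$, hence sequential order optimality; if additionally $(0,\delta_0]\subset\bigcup_\lambda D_{\lambda,\beta}$ the same comparison holds for all $\delta\in(0,\delta_0]$, giving uniform order optimality.

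The main obstacle I anticipate is not any single estimate but the bookkeeping of frame constants and the $E$-versus-$E^2$ normalization: one must consistently use $|\mathbf{u}|_{\inf}^{-1}\le\sqrt{A_u^{-1}}$, $\sum_\lambda|\langle y^\delta-y,v_\lambda\rangle_Y|^2 \le B_v\delta^2$, and the exact definition \eqref{eq-source-1} of $\mathbf{M}_{\varphi,E}$ so that the constants in \eqref{ieq-prior} come out precisely as stated, and then ensure the algebraic identity $\delta^2/\alpha(\delta) = (E^2/A_v)\Theta^{-1}(A_v\delta^2/E^2)$ is applied correctly, since this is the step where the a priori choice \eqref{eq-parameter-choose-prior} does its balancing work.
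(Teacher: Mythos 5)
Your proposal is correct and takes essentially the same route as the paper: the same noise/bias decomposition (the paper phrases it via the intermediate $x_\alpha=R_\alpha y$ and the triangle inequality), the same use of \eqref{w-dual-upper-inequality}, the frame bounds and Assumption A2, the same balancing identity $\delta^2/\alpha(\delta)=(E^2/A_v)\,\Theta^{-1}(A_v\delta^2/E^2)$, monotonicity of $\Theta^{-1}$ with $A_v<|\mathbf{v}|_{\inf}^2$, and optimality of (i)--(ii) by comparison with Theorem \ref{Theo:lower-bound} on $\bigcup_{\lambda}D_{\lambda,\beta}$ (which the paper leaves implicit). The only discrepancy, your $\sqrt{E}$ versus the stated $E$ in the bias term, comes from the paper's own inconsistency between \eqref{eq-source-1} (bound $E$) and the normalization actually used in its proofs and abstract (bound $E^2$, i.e.\ $\|\omega\|_2\leq E$), which you correctly flagged; with the intended normalization your argument reproduces \eqref{ieq-prior} exactly.
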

	\begin{Rema}
		(i) Note that, in the case $\varphi\left(\mu \right) = \mu^{2\nu}, p>0$ then our result will become a similar result to Theorem 3.8 in \cite{Ebner} and Theorem 2.5 in \cite{Hubmer}.
		
		(ii)  Calculating the exact number $|\mathbf{v}|_{\inf}$ is not easy. Therefore, choosing $A_v$ as in the theorem will make the calculation of $\alpha(\delta)$ more feasible. However, if $A_v$ is small, the error will contain $A_v^{-1}$ and so will be large. Therefore, $A_v$ should be chosen such that $\beta|\mathbf{v}|_{\inf}\leq A_v\leq |\mathbf{v}|_{\inf}$.	
	\end{Rema}
	
	\subsection{Posteriori parameter choice}
	In this subsection, we present the results of the discrepancy between the exact solution and the regularized solution by choosing the posterior regularization parameter according to the Morozov principle. Specifically, the chosen parameter for posterior regularization $\alpha$ following the Morozov principle \cite{193}, this principle helps find the parameter $\alpha$ as a solution to the following equation
	\begin{equation}
		\label{eq-choosing-alp-posterior}
		d(\alpha):=\left( \sum_{\lambda\in\Lambda} \left| \langle \mathbf{K}  x_\alpha^\delta-y^\delta, v_\lambda \rangle_Y \right|^2 \right)^\frac{1}{2}=\tau\sqrt{B_v}\delta \quad\text { with } \quad \tau> 1 .
	\end{equation}
	
	Let's additionally assume that the function $g_\alpha$ satisfies the following Assumption B1.
	
	\textbf{Assumption B1}. The function  $g_\alpha:(0, a^*] \rightarrow(0, \infty)$  satisfies 
	\begin{description}
		\item[(i)] $1-\mu g_\alpha(\mu) \rightarrow \rho$ for $\rho>0$, $\alpha \rightarrow \infty$ and for each $\mu \in[0, a^*),$
		\item[(ii)] $g_{\alpha_n}(\mu) \rightarrow g_\alpha(\mu)$ for $\alpha_n \rightarrow \alpha>0$ and for every $\mu \in [0, a^*)$.
	\end{description}
	\begin{Theo}
		\label{posteriori-equation}
		Let Assumptions C and B1 holds. Assume that $0<\tau\sqrt{B_v}< \rho \sqrt{A_v}\|P_{\overline{rank\mathbf{K}}} y^\delta\|_Y $. 
		Then, there exists a constant $\alpha_D(\delta)$ such that
		the equation \eqref{eq-choosing-alp-posterior} holds. 
	\end{Theo}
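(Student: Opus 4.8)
The statement is an existence result for a root of $d(\cdot)-\tau\sqrt{B_v}\,\delta$, so the natural strategy is an intermediate–value argument for the scalar function $d:(0,\infty)\to[0,\infty)$. The plan is: (i) rewrite $d(\alpha)$ in terms of the residual filter $r_\alpha(\mu):=1-\mu g_\alpha(\mu)$ evaluated at the $\kappa_\lambda^2$; (ii) prove $d$ is continuous on $(0,\infty)$ and compute its limits as $\alpha\to 0^+$ and $\alpha\to\infty$; (iii) conclude by the intermediate value theorem.

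For (i): since $\mathbf{K}^*v_\lambda=\kappa_\lambda u_\lambda$, one has $\langle\mathbf{K}x^\delta_\alpha,v_\lambda\rangle_Y=\kappa_\lambda\langle x^\delta_\alpha,u_\lambda\rangle_X$; inserting the series \eqref{eq-solution-regular-noise-dfd} for $x^\delta_\alpha$ and the expansion \eqref{eq-present-x} of $P_{\overline{rank\mathbf{K}}}y^\delta$ in the frame $\mathbf{v}$ (using $\langle y^\delta,v_\lambda\rangle_Y=\langle P_{\overline{rank\mathbf{K}}}y^\delta,v_\lambda\rangle_Y$ because $v_\lambda\in\overline{rank\mathbf{K}}$), the $v$-analysis coefficients of the residual $\mathbf{K}x^\delta_\alpha-y^\delta$ are $-r_\alpha(\kappa_\lambda^2)\langle y^\delta,v_\lambda\rangle_Y$, which gives
\begin{equation}
d(\alpha)^2=\sum_{\lambda\in\Lambda}r_\alpha(\kappa_\lambda^2)^2\,\bigl|\langle y^\delta,v_\lambda\rangle_Y\bigr|^2 .
\nonumber
\end{equation}
This identity is transparent when the frames $\mathbf{u},\mathbf{v}$ are minimal, so that analysis and synthesis are biorthogonal and diagonalise along $(\kappa_\lambda^2)_\lambda$; in the general case one still retains the bound $d(\alpha)^2\le\sum_\lambda r_\alpha(\kappa_\lambda^2)^2|\langle y^\delta,v_\lambda\rangle_Y|^2$, which together with the exact value of $\lim_{\alpha\to\infty}d(\alpha)$ computed below is all that the argument needs. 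By \eqref{AvBv}, $\sum_\lambda|\langle y^\delta,v_\lambda\rangle_Y|^2=\|P_{\overline{rank\mathbf{K}}}y^\delta\|_v^2$ lies in $[A_v\|P_{\overline{rank\mathbf{K}}}y^\delta\|_Y^2,\;B_v\|P_{\overline{rank\mathbf{K}}}y^\delta\|_Y^2]$, so $d(\alpha)$ is finite for every $\alpha>0$.

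For (ii): Assumption {\bf C2} gives the uniform bound $|r_\alpha(\mu)|\le 1+C_g$, so every summand above is dominated by $(1+C_g)^2|\langle y^\delta,v_\lambda\rangle_Y|^2$, a fixed summable sequence; dominated convergence then governs all limits. As $\alpha\to0^+$, {\bf C3} gives $r_\alpha(\kappa_\lambda^2)\to0$ for each $\lambda$, hence $d(\alpha)\to0$. As $\alpha\to\infty$, {\bf B1(i)} gives $r_\alpha(\kappa_\lambda^2)\to\rho$ for each $\lambda$; since the limiting coefficient sequence $\rho(\langle y^\delta,v_\lambda\rangle_Y)_\lambda$ is $\rho$ times the $v$-analysis of $P_{\overline{rank\mathbf{K}}}y^\delta$ — on which synthesis followed by analysis acts as the identity — we obtain $d(\alpha)\to\rho\|P_{\overline{rank\mathbf{K}}}y^\delta\|_v\ge\rho\sqrt{A_v}\,\|P_{\overline{rank\mathbf{K}}}y^\delta\|_Y$. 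Finally, for $\alpha_n\to\alpha\in(0,\infty)$, {\bf B1(ii)} gives $g_{\alpha_n}(\kappa_\lambda^2)\to g_\alpha(\kappa_\lambda^2)$, hence $r_{\alpha_n}(\kappa_\lambda^2)\to r_\alpha(\kappa_\lambda^2)$, for each $\lambda$, and dominated convergence yields $d(\alpha_n)\to d(\alpha)$; thus $d$ is continuous on $(0,\infty)$.

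For (iii): combining the limits with $d(\alpha)\to\rho\|P_{\overline{rank\mathbf{K}}}y^\delta\|_v\ge\rho\sqrt{A_v}\,\|P_{\overline{rank\mathbf{K}}}y^\delta\|_Y>\tau\sqrt{B_v}\ge\tau\sqrt{B_v}\,\delta$ (for $\delta$ in the admissible range), the hypothesis $0<\tau\sqrt{B_v}<\rho\sqrt{A_v}\,\|P_{\overline{rank\mathbf{K}}}y^\delta\|_Y$ places $\tau\sqrt{B_v}\,\delta$ strictly between $\lim_{\alpha\to0^+}d(\alpha)=0$ and $\lim_{\alpha\to\infty}d(\alpha)$; by continuity of $d$ and the intermediate value theorem there exists $\alpha_D(\delta)\in(0,\infty)$ with $d(\alpha_D(\delta))=\tau\sqrt{B_v}\,\delta$, which is exactly \eqref{eq-choosing-alp-posterior}. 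The main obstacle is step (i): making precise how $d(\alpha)$ depends on the residual filter $r_\alpha$ when the DFD frames fail to be minimal, since then the analysis and synthesis operators do not commute with multiplication by $(\kappa_\lambda^2)_\lambda$ and only an inequality survives at finite $\alpha$; the saving feature is that at $\alpha=\infty$ the limiting coefficient vector lies in the range of the $v$-analysis operator, which restores the sharp lower bound required to run the intermediate value argument, while the dominated-convergence steps in (ii) rest on the uniform bound from {\bf C2}.
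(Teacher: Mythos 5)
Your proposal follows essentially the same route as the paper: using the diagonal relations (Lemma \ref{u-to-v}) to write $d(\alpha)=\bigl(\sum_{\lambda\in\Lambda} r_\alpha(\kappa_\lambda^2)^2|\langle y^\delta,v_\lambda\rangle_Y|^2\bigr)^{1/2}$ with $r_\alpha(\mu)=1-\mu g_\alpha(\mu)$, establishing continuity and the limits $d(\alpha)\to 0$ as $\alpha\to 0^+$ and $d(\alpha)\to\rho\,\|P_{\overline{rank\mathbf{K}}}y^\delta\|_v\ge\rho\sqrt{A_v}\,\|P_{\overline{rank\mathbf{K}}}y^\delta\|_Y$ as $\alpha\to\infty$ from Assumptions C and B1 (you justify these by dominated convergence with the bound $|r_\alpha|\le 1+C_g$, which the paper merely asserts), and concluding by the intermediate value theorem, exactly as in the paper's argument. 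Your side remark about the non-minimal case (the claimed inequality $d(\alpha)^2\le\sum_\lambda r_\alpha(\kappa_\lambda^2)^2|\langle y^\delta,v_\lambda\rangle_Y|^2$ and the ``sharp lower bound at $\alpha=\infty$'') is not actually substantiated, but this does not create a gap relative to the paper, whose own proof also invokes Lemma \ref{u-to-v} and hence the same biorthogonality.
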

	\begin{Rema}
		(i) For convenience of calculation, we can choose the parameter $\alpha$ such that $d(\alpha)\geq \tau' \sqrt{B_v}\delta$ for $\tau'>1$. Putting $\tau=\sqrt{B_v^{-1}}d(\alpha)$, we obtain the equation $d(\alpha)=\tau\sqrt{B_v}\delta$ and $\tau>\tau'>1$. \\
		(ii)
		Using the classical Morozov principle, we can choose
		the parameter $\alpha$ satisfying $\tilde{d}(\alpha)=\tilde{\tau}\delta$ for $\tilde{d}(\alpha)=\|Kx_\alpha^\delta-y^\delta\|_Y$ and $\tilde{\tau}$ is chosen appropriately. 
	\end{Rema}

	Next, we introduce some additional conditions suggested from \cite{Vainik}, page 75:
	
	\textbf{Assumption B2.} The function $g_\alpha:(0, a^*] \rightarrow \mathbb{R}$ satisfies
	\begin{description}
		\item[(i)] $g_\alpha(\mu) \geq 0$,
		\item[(ii)] $0 \leq 1-\mu g_\alpha(\mu) \leq \frac{g_\alpha(\mu)}{\ell_\alpha}$ with ${\ell_\alpha}:=\sup _{0 \leq \mu \leq a^*} g_\alpha(\mu)$,
		\item[(iii)] $\frac{\ell_*}{\alpha} \leq {\ell_\alpha} \leq \frac{\ell^*}{\alpha}$ with constants $\ell_*,\ell^*>0$.
	\end{description}
	In the following theorem, we provide a bound for $\Vert x_\alpha^\delta-x^{\ddagger}\Vert_X$. 
	
	We have the optimality of the rule of posteriori choice of the regularization parameter $\alpha_D(\delta)$ 
	\begin{Theo}
		\label{theo-posterior-general}
		Let $(\mathbf{u}, \mathbf{v}, \boldsymbol{\kappa})$ be a DFD  of $\mathbf{K}$ and 
		$\mathbf{u}$ be minimal.
		With $\mathbf{M}_{\varphi, E}$ defined as in (\ref{eq-source-1}) and Assumption A1 simultaneously satisfying that $x_\alpha^\delta$ is the approximate regularization solution as in (\ref{eq-solution-regular-noise-dfd}) with $g_\alpha$ satisfying Assumption B2. Moreover, assuming Assumption B1, (\ref{ieq-condition-C-delta}) is satisfied, and $\alpha_D(\delta)$ is chosen by the Morozov principle (\ref{eq-choosing-alp-posterior}). If the function $\varphi$ is concave, then 
		\begin{equation}
			\label{ieq-x-alp-del-x-poster-general}
			\Vert x_\alpha^\delta-x^{\ddagger} \Vert_X \leq \sqrt{A^{-1}_u B_u} (\tau+1) E \sqrt{ \Theta^{-1} \left(|\mathbf{v}|_{\inf}^2\delta^2/E^2\right)},
		\end{equation}
		where $A_u, B_u$ are bounds of frame $\mathbf{u} $.
		Hence,  $R_{\alpha_D(\delta)}$ is sequential order optimal over the set $\mathbf{M}_{\varphi, E}$. 
		
		Moreover, if there exists $\beta\in (0,1)$ and a $\delta_0>0$ such that $(0,\delta_0]\subset \cup_{\lambda\in\Lambda}D_{\lambda,\beta}$
		then the regularization method $R_{\alpha_D(\delta)}$ is uniform order optimal. Here we recall that $D_{\lambda,\beta}$ defined in \eqref{set-D}.
	\end{Theo}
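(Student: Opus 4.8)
The plan is to decompose the error into a bias term and a propagated‑noise term and to estimate the two separately — the bias by an interpolation inequality resting on the convexity of $\Theta$, as in the classical order‑optimality theory \cite{Tautenhahn,Vainik}, the noise by the filter bounds of Assumption B2 together with a comparison of $\alpha_D(\delta)$ with the a priori choice \eqref{eq-parameter-choose-prior}; everything is carried out coefficientwise in the DFD frame. Since $\mathbf{u}$ is minimal, $\overline{\mathbf{u}}$ is biorthogonal to $\mathbf{u}$, so the expansions \eqref{eq-solution-dfd} and \eqref{eq-solution-regular-noise-dfd} are the unique ones in $\overline{\mathbf{u}}$. Writing $\alpha=\alpha_D(\delta)$, $x_\alpha:=R_\alpha y$, and using $\langle y,v_\lambda\rangle_Y=\kappa_\lambda\langle x^\ddagger,u_\lambda\rangle_X$, I would split
\[
x^\ddagger-x^\delta_{\alpha}=b-v,\qquad \langle b,u_\lambda\rangle_X=\bigl(1-\kappa_\lambda^2 g_\alpha(\kappa_\lambda^2)\bigr)\langle x^\ddagger,u_\lambda\rangle_X,\qquad \langle v,u_\lambda\rangle_X=\kappa_\lambda g_\alpha(\kappa_\lambda^2)\langle y^\delta-y,v_\lambda\rangle_Y,
\]
so that by \eqref{w-dual-upper-inequality} it suffices to estimate $\|b\|_u$ and $\|v\|_u$, with $\|x^\ddagger-x^\delta_{\alpha}\|_X\le|\mathbf{u}|_{\inf}^{-1}\bigl(\|b\|_u+\|v\|_u\bigr)$.

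For the bias I would establish two bounds. First, $b$ inherits the source smoothness: since $0\le 1-\kappa_\lambda^2 g_\alpha(\kappa_\lambda^2)\le 1$ by Assumption B2(ii), $\sum_\lambda[\varphi(\kappa_\lambda^2)]^{-1}|\langle b,u_\lambda\rangle_X|^2\le\sum_\lambda[\varphi(\kappa_\lambda^2)]^{-1}|\langle x^\ddagger,u_\lambda\rangle_X|^2\le E^2$. Second, the Morozov identity \eqref{eq-choosing-alp-posterior} controls the residual of $b$: from $\langle\mathbf{K}b,v_\lambda\rangle_Y=\kappa_\lambda\langle b,u_\lambda\rangle_X=\bigl(1-\kappa_\lambda^2 g_\alpha(\kappa_\lambda^2)\bigr)\langle y^\delta,v_\lambda\rangle_Y-\bigl(1-\kappa_\lambda^2 g_\alpha(\kappa_\lambda^2)\bigr)\langle y^\delta-y,v_\lambda\rangle_Y$, the triangle inequality in $\ell^2(\Lambda)$, $d(\alpha)=\tau\sqrt{B_v}\,\delta$, $0\le 1-\kappa_\lambda^2 g_\alpha(\kappa_\lambda^2)\le1$, and $\|y^\delta-y\|_v\le\sqrt{B_v}\,\delta$ (apply \eqref{AvBv} to $P_{\overline{\mathrm{ran}\,\mathbf{K}}}(y^\delta-y)$), one gets $\sum_\lambda\kappa_\lambda^2|\langle b,u_\lambda\rangle_X|^2=\|\mathbf{K}b\|_v^2\le(\tau+1)^2B_v\,\delta^2$. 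The crucial step is then an interpolation lemma: for $c_\lambda\ge0$ with $\sum_\lambda c_\lambda/\varphi(\kappa_\lambda^2)\le\rho^2$ and $\sum_\lambda\kappa_\lambda^2 c_\lambda\le\varepsilon^2$ one has $\sum_\lambda c_\lambda\le\rho^2\,\Theta^{-1}(\varepsilon^2/\rho^2)$; this is proved by setting $w_\lambda:=c_\lambda/\varphi(\kappa_\lambda^2)$, noting $\kappa_\lambda^2 c_\lambda=w_\lambda\,\Theta(\varphi(\kappa_\lambda^2))$, applying Jensen's inequality to the concave increasing function $\Theta^{-1}$ (available via Assumptions A1(ii)–(iii)), and using that $W\mapsto W\,\Theta^{-1}(\varepsilon^2/W)$ is nondecreasing. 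Applied with $c_\lambda=|\langle b,u_\lambda\rangle_X|^2$ it yields $\|b\|_u^2\le E^2\,\Theta^{-1}\bigl((\tau+1)^2B_v\delta^2/E^2\bigr)$, and factoring the constant $(\tau+1)^2B_v/|\mathbf{v}|_{\inf}^2\ge1$ out of $\Theta^{-1}$ (again by concavity) brings this to the form $\le C_1\,E\sqrt{\Theta^{-1}(|\mathbf{v}|_{\inf}^2\delta^2/E^2)}$.

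For the propagated noise, Assumption B2(i)–(ii) gives $\kappa_\lambda^2 g_\alpha(\kappa_\lambda^2)^2\le g_\alpha(\kappa_\lambda^2)\le\ell_\alpha$ and B2(iii) gives $\ell_\alpha\le\ell^*/\alpha$, so $\|v\|_u^2\le(\ell^*/\alpha)\,\|y^\delta-y\|_v^2\le\ell^*B_v\,\delta^2/\alpha$. To turn this into the optimal rate I need $\alpha=\alpha_D(\delta)$ to be, up to a constant, the a priori choice, i.e.\ $\alpha_D(\delta)\,\varphi(\alpha_D(\delta))\ge c\,\delta^2/E^2$, equivalently $1/\alpha_D(\delta)\le C\,\Theta^{-1}(c'\delta^2/E^2)\,E^2/\delta^2$, so that $\|v\|_u^2\le C_2\,E^2\,\Theta^{-1}(c'\delta^2/E^2)$. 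This is where I expect most of the work to lie: one bounds $d(\alpha)$ from above by splitting $\langle y^\delta,v_\lambda\rangle_Y=\kappa_\lambda\langle x^\ddagger,u_\lambda\rangle_X+\langle y^\delta-y,v_\lambda\rangle_Y$ and estimating the signal contribution $\sum_\lambda\bigl(1-\kappa_\lambda^2 g_\alpha(\kappa_\lambda^2)\bigr)^2\kappa_\lambda^2|\langle x^\ddagger,u_\lambda\rangle_X|^2$ through the source condition together with Assumptions B2(ii)–(iii), here invoking the concavity of $\varphi$ to control the weight $\bigl(1-\kappa_\lambda^2 g_\alpha(\kappa_\lambda^2)\bigr)^2\kappa_\lambda^2\varphi(\kappa_\lambda^2)$; comparing with $\tau\sqrt{B_v}\,\delta$ and using the monotonicity/continuity of $d$ from Assumption B1 (which already underlies Theorem \ref{posteriori-equation}) then pins $\alpha_D(\delta)$ between two multiples of the a priori value. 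Adding the bias and noise estimates and doing the frame‑constant bookkeeping ($|\mathbf{u}|_{\inf}^{-1}\le A_u^{-1/2}$, $|\mathbf{u}|_{\sup}\le B_u^{1/2}$) then yields \eqref{ieq-x-alp-del-x-poster-general}.

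Finally, order optimality is read off from Theorem \ref{Theo:lower-bound}. Since $\inf_\lambda\kappa_\lambda=0$ there is a subsequence with $\kappa_{\lambda_n}\to0$, hence $\delta_n:=\delta^*_{\lambda_n}\to0$ and $\delta_n\in D_{\lambda_n,\beta}\subset\bigcup_{\lambda\in\Lambda}D_{\lambda,\beta}$, so along $(\delta_n)$ Theorem \ref{Theo:lower-bound} gives $\inf_{\mathbf{R}}\Delta(\mathbf{M}_{\varphi,E},\delta_n,\mathbf{R})\ge\beta|\mathbf{u}|_{\sup}^{-1}E\sqrt{\Theta^{-1}(|\mathbf{v}|_{\inf}^2\delta_n^2/E^2)}$, which matches \eqref{ieq-x-alp-del-x-poster-general} up to a $\delta$‑independent constant (depending only on $\tau$, $\beta$ and the frame bounds of $\mathbf{u}$); hence $R_{\alpha_D(\delta)}$ is sequential order optimal. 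If in addition $(0,\delta_0]\subset\bigcup_{\lambda\in\Lambda}D_{\lambda,\beta}$, the lower bound holds for every $\delta\in(0,\delta_0]$ and the same comparison gives uniform order optimality.
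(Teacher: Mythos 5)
Your outline is workable, but it takes a genuinely different route from the paper. The paper never splits the error into bias plus propagated noise: its central tool is the Vainikko--type inequality of Lemma \ref{lemm:support-posterior}, proved coefficientwise from Assumption B2,
\begin{equation*}
\|x^\delta_\alpha-x^\ddagger\|_u^2+\ell_\alpha\bigl(\|\mathbf{K}x_\alpha^\delta-y^\delta\|_v^2-\|\mathbf{K}x^\ddagger-y^\delta\|_v^2\bigr)\le\bigl\langle \bigl[I-\mathbf{K}^*\mathbf{K}g_\alpha(\mathbf{K}^*\mathbf{K})\bigr]x^\ddagger,x^\ddagger\bigr\rangle_u ,
\end{equation*}
and since the Morozov choice makes the bracket nonnegative, the \emph{whole} error is bounded by the pure bias form $\|[r_\alpha(\mathbf{K}^*\mathbf{K})]^{1/2}x^\ddagger\|_u$; the rate then comes from the residual bound $\|\mathbf{K}x_{\alpha_D}-\mathbf{K}x^\ddagger\|_v\le(\tau+1)\sqrt{B_v}\,\delta$, Jensen's inequality for the convex $\Theta$, and Lemma \ref{Theta} (where the concavity of $\varphi$ enters). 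Your bias estimate is essentially the same interpolation argument in different clothing (Jensen for the concave $\Theta^{-1}$), and it is correct. The real divergence is your noise term $\delta/\sqrt{\alpha_D}$: the paper's argument makes this term vanish entirely, whereas you must prove a lower bound on $\alpha_D(\delta)$ comparable with the a priori choice, and this is precisely the step you leave as a sketch. It can be completed under the stated hypotheses: B2(ii)--(iii) give $\mu r_\alpha(\mu)\le 1/\ell_\alpha\le\alpha/\ell_*$, and splitting $\mu\le\alpha$ from $\mu>\alpha$ (where concavity yields $\varphi(\mu)\le(\mu/\alpha)\varphi(\alpha)$) gives $\sup_\mu r_\alpha^2(\mu)\,\mu\varphi(\mu)\le \max(1,\ell_*^{-2})\,\alpha\varphi(\alpha)$; hence $d(\alpha_D)\le C E\sqrt{\alpha_D\varphi(\alpha_D)}+\sqrt{B_v}\,\delta$, and $\tau>1$ forces $\alpha_D\varphi(\alpha_D)\ge c\,\delta^2/E^2$, which (by monotonicity of $\mu\mapsto\mu\varphi(\mu)$ and $\varphi$) is exactly $\alpha_D\ge c\,\alpha_{\rm apriori}$ and hence $\delta^2/\alpha_D\le c^{-1}E^2\Theta^{-1}(\delta^2/E^2)$. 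Two caveats. First, your stated goal of pinning $\alpha_D$ \emph{between two multiples} of the a priori value is neither needed nor true in general: if $x^\ddagger$ is smoother than $\varphi$ requires, $\alpha_D$ can be arbitrarily much larger; only the lower bound is available, and only the lower bound is used in your estimates. Second, your constants (involving $\ell^*,\ell_*,\tau$ and the frame bounds) are larger than the $\sqrt{A_u^{-1}B_u}\,(\tau+1)$ of \eqref{ieq-x-alp-del-x-poster-general}, so strictly speaking you obtain the convergence rate and the order-optimality conclusions rather than that exact inequality; the paper's one-shot inequality is what buys the clean constant. Your optimality argument via Theorem \ref{Theo:lower-bound} (sequentially along $\delta_n=\delta^*_{\lambda_n}\to0$, uniformly when $(0,\delta_0]\subset\bigcup_{\lambda}D_{\lambda,\beta}$) coincides with the paper's.
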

	Note that our result will be as in Theorem 2.7 in \cite{Hubmer} if the source function $\varphi$ is replaced by polynomial function.

	\section{Illustrative problems}
	\subsection{Notations}
	In this section, we denote the inner product and the norm of $L^2(\mathbb{R})$ by $\langle.,.\rangle$ and $\|.\|$ respectively. The Fourier transform of a function $ f(t) $, $t\in\mathbb{R}$,   is defined as
	$ \mathcal{F}(\omega) = \int_{\mathbb{R}} f(t) e^{-i\omega .t} dt $
	and, the Mittag-Leffler function is understood as 
	$$E_{\alpha, \beta}(z) = \sum_{n=0}^{\infty} \frac{z^n}{\Gamma(\alpha n + \beta)},\quad  E_{1,1}(z)=e^z.$$
	Here the notation $\Gamma $ denotes the gamma function. From \cite{TrongNaneMinhTuan}, for $0<\gamma<1$, we have 
	\begin{equation}
		\dfrac{\underline{c}}{1+|z|} \leq \left| E_{\gamma, 1} \left( z \right) \right)| \leq \dfrac{\overline{c}}{1+|z|},~~~{\rm for}~ z<0,
		\label{bound-ML}
	\end{equation}
	where the derivative in the equation is understood in the Caputo sense. Specifically, The Caputo derivative is defined as:
	$$ \frac{{d^\gamma}}{{dt^\gamma}} f(t) =
	\begin{cases}
		\frac{1}{\Gamma(1-\alpha)} \int_{0}^{t} (t-\tau)^{-\gamma} \frac{d}{d\tau} f(\tau) d\tau, & 0<\gamma<1,\\
		\frac{df}{dt}, & \gamma=1.
	\end{cases}
	$$

	\subsection{Statement of the problems}
	We give an example to illustrate our result in previous section. In particular,  consider the backward fractional heat equation. Specifically, we find the solution at the initial time $u(x,0)$ knowing that $u(x,t)$ satisfies the heat equation. 
	Precisely, for $\gamma \in \left(0; 1 \right]$, we consider the equation
	\begin{equation}
		\label{eq-fheqs}
		\partial^\gamma_t u(x, t) - \Delta u(x, t) =0, x \in \mathbb{R}, 0 <t <T
	\end{equation}
	subject to the final condition 
	\begin{equation}
		\label{eq-finla-condition}
		u(x, T) = \theta_T (x),~~~x\in\mathbb{R}.
	\end{equation}	
	Similar to the condition \eqref{ieq-noise-org}, we have to consider the problem (\ref{eq-fheqs}) and (\ref{eq-finla-condition}) with the unknown exact data $\theta_T$  replaced by noise data $\theta_T^\delta$ satisfying 
	\begin{equation}
		\Vert \theta_T^\delta - \theta_T \Vert \leq \delta.
		\nonumber
	\end{equation}	
	We consider the ill-posedness of the problem. 
	Using the Fourier transform yields the solution to the problem yields
	\begin{equation}
		\label{eq-solution-fheqs}
		\mathcal{F}\theta_0 \left(\omega \right) = E^{-1}_{\gamma, 1} \left(- \left|\omega \right|^2 T^\gamma \right) \mathcal{F}\theta_T \left(\omega \right),
	\end{equation}
	where $\omega\in\mathbb{R}$. 
	From the inequality \eqref{bound-ML},  for $\gamma\in (0,1)$, the factor $E^{-1}_{\gamma, 1}(-|\omega|^2T^\gamma)$  has the polynomial magnitude $\asymp |\omega|^2$. This term causes instability when $|\omega|$ is large. Similarly, for $\gamma=1$, the factor $E^{-1}_{\gamma, 1}(-|\omega|^2T^\gamma)$  has the exponential magnitude $\asymp e^{|\omega|^2T}$ which causes severely instability when $|\omega|$ is large. 
	In conclusion, the problem is ``polynomially''  ill-posed for $\gamma\in (0,1)$ and is ``exponentially''  ill-posed for $\gamma=1$. These two types of ill-posed differ in nature. Therefore, suitable regularization methods for these two cases need to be proposed.

	We denote by $\mathbf{K}$ the operator such that 
	$\mathbf{K}\theta_0 = \theta_T$. Then, from the Fourier solution \eqref{eq-solution-fheqs} of the equation, we deduce that $\mathcal{F} \left( \mathbf{K}\theta_0 \right) = E_{\gamma, 1} \left(- \left|\omega \right|^2 T^\gamma \right) \mathcal{F}\theta_0 \left(\omega \right)= \mathcal{F} \left( \theta_T \right)$ . 	To find the DFD of $\mathbf{K}$, we base on the following arguments (see \cite{Ebner}). As we have known that, for all $\theta_0, v_0 \in \text{dom} \mathbf{K}$, we have 
	\begin{align*}
		\langle \mathbf{K}\theta_0, \mathbf{K} v_0 \rangle  &= 	
		\frac{1}{2\pi} \langle \mathcal{F} \mathbf{K}\theta_0, \mathcal{F} \mathbf{K} v_0 \rangle =  \frac{1}{2\pi} \langle  E_{\gamma, 1} \left(- \left|\omega \right|^2 T^\gamma \right) \mathcal{F}\theta_0, E_{\gamma, 1} \left(- \left|\omega \right|^2 T^\gamma \right) \mathcal{F} v_0 \rangle.
	\end{align*}
	It follows that $\mathbf{K}^* \mathbf{K} \theta_0 = \mathcal{F}^{-1} \left( \left| E_{\gamma, 1} \left(- \left|\omega \right|^2 T^\gamma \right)\right|^2  \mathcal{F}\theta_0 \right)$ for all $ \theta_0 \in \text{ dom }  \left(\mathbf{K}^*\mathbf{K}\right)  $. 
	
	To study DFD ($\mathbf{u}, \mathbf{v}, \boldsymbol{\kappa})$ of  operator $\mathbf{K}$  on $L^2\left(\mathbb{R}^2 \right)$. At first, we show necessary properties for $\mathbf{v}$ and $\boldsymbol{\kappa}$ in general case and after that we consider DFD for $\mathbf{K}$ in the case that $\mathbf{u}$ is concrete systems. 
	In fact, for each $v_\lambda \in \text{ran} \mathbf{K}$, because $\left(\mathbf{u}, \mathbf{v},\boldsymbol{ \kappa}\right)$ is a DFD for $\mathbf{K}$, we have $v_\lambda = \kappa_\lambda \mathbf{K} \sigma_\lambda$ for some $\sigma_\lambda \in \text{dom } K$. Moreover, from condition {\bf D3}, $\mathbf{K}^* v_\lambda = \kappa_\lambda u_\lambda$. It leads us to $u_\lambda = \mathbf{K}^* \mathbf{K} \sigma_\lambda$. This gives that $\sigma_\lambda \in \text{ dom }  \left(\mathbf{K}^*\mathbf{K}\right)$. Combining with the Fourier representation of $\mathbf{K}^*\mathbf{K}\sigma_\lambda$, we have $\sigma_\lambda =  \mathcal{F}^{-1} \left( \left| E_{\gamma, 1} \left(- \left|\omega \right|^2 T^\gamma \right)\right|^2 \mathcal{F} u_\lambda \right)$ and 
	\begin{equation}
		\label{eq-v-lambda}E_{\gamma, 1}
		v_\lambda = \kappa_\lambda \mathbf{K} \mathcal{F}^{-1} \left( \left|  \left(- \left|\omega \right|^2 T^\gamma \right)\right|^{-2} \mathcal{F}u_\lambda \right).
	\end{equation}
	From here, we can establish the relationship between the Fourier transform of $ v_\lambda$ and $u_\lambda $ as 
	\begin{equation}
		\label{eq-v-lambda-Fourier}
		\mathcal{F} v_\lambda = \kappa_\lambda \overline{E^{-1}_{\gamma, 1} \left(- \left|\omega \right|^2 T^\gamma \right)}  \mathcal{F}u_\lambda
		= \kappa_\lambda E^{-1}_{\gamma, 1} \left(- \left|\omega \right|^2 T^\gamma \right)  \mathcal{F}u_\lambda,
	\end{equation}
	where $\overline{z}$ is conjugate of $z$.
	
	\subsection{The fractional backward problem}
	We first consider the case $\gamma\in (0,1)$. 
	Denote the $L^2(\mathbb{R})$-wavelet orthonormal basis $\psi^{j,k}\left(x \right) = 2^{j/2} \psi \left(2^jx-k   \right), $ $(j,k)\in\mathbb{Z}^2$, where $\psi$ is a mother wavelet (see, e.g., \cite{Christ}, chap. 15). Put 
	$ \Lambda=\mathbb{Z}^2$, $\lambda=(\lambda_D,\lambda_T)\in\mathbb{Z}^2$. We consider the wavelet orthonormal basis ${u}_\lambda$ in $L^2(\mathbb{R}^2)$ in  the form
	\begin{equation}
		\label{eq-u-cuthe}
		{u}_\lambda(x)=\psi^{\lambda}(x),~~~\forall \lambda=(\lambda_D,\lambda_T) \in \Lambda, x\in\mathbb{R}.
	\end{equation}
	From here, we construct the DFD  for the operator $ \mathbf{K} $ using the following theorem.
	\begin{Theo}
		\label{Theo-WVD-K}
		Let $(u_{\lambda})_{\lambda \in \Lambda}$ be defined as in (\ref{eq-u-cuthe}) such that $supp \left(\mathcal{F}\psi\right) \subset \left\{ \omega \in \mathbb{R}^2: a_u \leq \left|\omega \right| \leq b_u \right\}$ where $a_u, b_u $ be positive constants. Then \\
	(a) $\left( u_\lambda, v_\lambda, \kappa_\lambda\right)_{\lambda \in \mathbb{Z}}$ be a DFD for $\mathbf{K}$ where 
		$$\kappa_\lambda = 
		\begin{cases}
			2^{-2\lambda_D}, & \text{for}~ \lambda_D\geq 1,\\
			1,     & \text{for}~   \lambda_D< 1.    
		\end{cases}
		$$
		and $v_\lambda$ defined as in (\ref{eq-v-lambda}).\\
	(b) If $\theta_0\in H^p(\mathbb{R})$ then $\theta_0\in \mathbf{M}_{\varphi,E}$, where $\varphi(\mu)=\mu^{p/2}$ and 
		$E$ large enough.
	(c) There exists a $\delta_0>0$ such that $(0,\delta_0]\subset
		\cup_{\lambda\in\Lambda}D_{\lambda,\beta}$ for 
		$\beta=2^{2+p}$. Here $D_{\lambda,\beta}$ is defined in 
		\eqref{set-D}.
	\end{Theo}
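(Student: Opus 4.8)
The plan is to verify the three DFD conditions (D1)–(D3) for part (a), then translate a Sobolev-regularity hypothesis into the DFD source set for part (b), and finally analyze the distribution of the numbers $\delta_\lambda^*$ for part (c).

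For part (a): Condition (D1) is immediate since $(u_\lambda)_{\lambda\in\Lambda}=(\psi^\lambda)$ is a wavelet orthonormal basis of $L^2(\mathbb{R})$, hence a tight frame (with $|\mathbf{u}|_{\rm fr}=1$) over all of $L^2(\mathbb{R})=(\ker\mathbf{K})^\perp$ — note $\mathbf{K}$ is injective because $E_{\gamma,1}(-|\omega|^2T^\gamma)>0$ for all $\omega$. For (D3), I would define $v_\lambda$ by \eqref{eq-v-lambda}, equivalently by the Fourier relation \eqref{eq-v-lambda-Fourier}, and check directly from $\mathbf{K}^*\mathbf{K}\sigma_\lambda=u_\lambda$ that $\mathbf{K}^*v_\lambda=\kappa_\lambda u_\lambda$; this is essentially the construction, so it is automatic once $\kappa_\lambda$ is chosen so that $v_\lambda$ lands in $\overline{\mathrm{ran}\,\mathbf{K}}$ with the right normalization. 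The crux is (D2): I must show $(v_\lambda)$ is a frame over $\overline{\mathrm{ran}\,\mathbf{K}}$. Here the support condition $\mathrm{supp}(\mathcal{F}\psi)\subset\{a_u\le|\omega|\le b_u\}$ is essential: on the support of $\mathcal{F}u_\lambda=\mathcal{F}\psi^\lambda$, which is the dyadic annulus $\{2^{\lambda_D}a_u\le|\omega|\le 2^{\lambda_D}b_u\}$ (for the $D$-variable), the multiplier $\kappa_\lambda E_{\gamma,1}^{-1}(-|\omega|^2T^\gamma)$ is comparable to a constant by \eqref{bound-ML}, since $E_{\gamma,1}^{-1}(-|\omega|^2T^\gamma)\asymp 1+|\omega|^2T^\gamma\asymp 2^{2\lambda_D}$ on that annulus — which is precisely why $\kappa_\lambda=2^{-2\lambda_D}$ (and $\kappa_\lambda=1$ once $\lambda_D<1$, where $|\omega|$ is bounded and the multiplier is already $\asymp 1$). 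Therefore $\|\mathcal{F}v_\lambda\|\asymp\|\mathcal{F}u_\lambda\|$ uniformly in $\lambda$, and since the frame property is equivalent to a two-sided bound on $\sum_\lambda|\langle z,v_\lambda\rangle|^2$, one transfers the frame bounds of $(u_\lambda)$ to $(v_\lambda)$ via the fact that $\mathbf{K}$ (resp. $\mathbf{K}^*$) is a bijective Fourier multiplier between the relevant subspaces on each annulus. I expect the bookkeeping of the two index components $(\lambda_D,\lambda_T)$ and the behavior at the low frequencies ($\lambda_D<1$, where $\kappa_\lambda=1$) to be the only delicate point.

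For part (b): writing $\theta_0=\sum_\lambda\langle\theta_0,\psi^\lambda\rangle\psi^\lambda$, the standard wavelet characterization of $H^p(\mathbb{R})$ gives $\|\theta_0\|_{H^p}^2\asymp\sum_\lambda(1+2^{2\lambda_D})^p|\langle\theta_0,u_\lambda\rangle|^2$. Comparing with the definition \eqref{eq-source-1} of $\mathbf{M}_{\varphi,E}$ with $\varphi(\mu)=\mu^{p/2}$, we need $[\varphi(\kappa_\lambda^2)]^{-1}=\kappa_\lambda^{-p}\asymp 2^{2p\lambda_D}$ (for $\lambda_D\ge 1$), which matches the Sobolev weight up to a constant; for $\lambda_D<1$ both weights are $\asymp 1$. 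Hence $\sum_\lambda[\varphi(\kappa_\lambda^2)]^{-1}|\langle\theta_0,u_\lambda\rangle|^2\le C\|\theta_0\|_{H^p}^2=:E$, so $\theta_0\in\mathbf{M}_{\varphi,E}$ for $E$ large enough. One should also record that $\varphi$ here satisfies Assumption A1 (it does, since $\Theta(\mu)=\mu\varphi^{-1}(\mu)=\mu\cdot\mu^{2/p}=\mu^{1+2/p}$ is convex).

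For part (c): by \eqref{set-D}, $\delta_\lambda^*=|\mathbf{v}|_{\inf}^{-1}E\sqrt{\kappa_\lambda^2\varphi(\kappa_\lambda^2)}=|\mathbf{v}|_{\inf}^{-1}E\,\kappa_\lambda^{1+p/2}$. For $\lambda_D\ge 1$ this is $\asymp 2^{-(2+p)\lambda_D}$, so as $\lambda_D$ ranges over $\{1,2,3,\dots\}$ the left endpoints form a geometric sequence with ratio $2^{-(2+p)}$; with $\beta=2^{-(2+p)}$ — here I suspect the intended value in the statement should be read as the ratio $2^{-(2+p)}\in(0,1)$ rather than $2^{2+p}$ — consecutive intervals $[\delta_\lambda^*,\beta^{-1}\delta_\lambda^*]$ overlap (the right endpoint of the $(\lambda_D+1)$-st equals the left endpoint of the $\lambda_D$-th), so their union contains an interval $(0,\delta_0]$, where $\delta_0$ is the right endpoint coming from $\lambda_D=1$. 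The main obstacle across the whole proof is part (a)(D2): making the "comparable multiplier on a dyadic annulus" argument fully rigorous — in particular handling the vector-valued frequency $\omega\in\mathbb{R}^2$ versus the one-dimensional wavelet index, and pinning down that the quasi-singular values are genuinely constant (not merely comparable) on each annulus, which is what \eqref{bound-ML} only gives up to constants; one resolves this by absorbing those constants into the frame bounds $A_v,B_v$ rather than into $\kappa_\lambda$ itself.
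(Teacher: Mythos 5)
Your proposal is correct and follows essentially the same route as the paper: for (a) the paper likewise transfers the frame bounds of $\mathbf{u}$ to $\mathbf{v}$ by showing $\kappa_\lambda E_{\gamma,1}^{-1}(-|\omega|^2T^\gamma)$ is bounded above and below, uniformly in $\lambda$, on the dyadic annulus supporting $\mathcal{F}u_\lambda$ (absorbing the constants from \eqref{bound-ML} into $A_v,B_v$ exactly as you suggest); for (b) it matches the weight $\kappa_\lambda^{-p}=2^{2p\lambda_D}$ with the $H^p$ frequency weight on each annulus, using only the Bessel (upper-bound) direction, which is all that is needed; and for (c) it uses the geometric decay $\delta_\lambda^*\asymp 2^{-(2+p)\lambda_D}$ to get overlapping intervals. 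Your reading of $\beta$ as $2^{-(2+p)}\in(0,1)$ rather than $2^{2+p}$ is also the intended (corrected) value.
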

	\begin{Rema}
		(i) For polynomially ill-posed  problems, the WVD system can be used well. We can see that in the tomography problem (see \cite{Ebner, Hubmer}) and the fractional backward problem.\\
	(ii) The result (b) provides a sufficient condition for the function $\theta_0$ to satisfy the DFD source condition. The function only needs to lie in the Hilbert scales $H^p(\mathbb{R})$.  
	\end{Rema}
	
	From Theorem \ref{Theo-WVD-K}, we obtain the WVD of the operator $\mathbf{K}$. In particular, that is $\left(u_\lambda, v_\lambda, \kappa_{\lambda}\right)_{\lambda\in\Lambda}$.  
	This allows us to regularize the inverse problem for the fractional heat equation with the source function $\varphi\left(\mu \right) = \mu^{p/2}$ and then $\mathbf{M}_{\varphi, E}$ becomes 
	\begin{equation}
		\mathbf{M}_{\varphi, E} : = \left\{\theta_0 \in L^2\left(\mathbb{R} \right): \sum_{\lambda\in\Lambda}
		[\varphi(\kappa_\lambda^2)]^{-1}\langle \theta_0, u_\lambda \rangle|^2\leq E^2\right\}.
		\nonumber
	\end{equation}
	Using the Tikhonov regularization for $g_\alpha \left(\lambda \right)= \frac{1}{\alpha + \lambda}$. Then some conditions for filter function satisfy.
	The chosen $\{u_\lambda\}$ is tight, since it is 
	orthonormal. So  $\overline{u}_\lambda=u_\lambda $ and
	\eqref{eq-solution-regular-noise-dfd} can be rewritten as   
	$$ u_{0 \alpha}^\delta : = \sum_{\lambda \in \Lambda} \dfrac{2^{-2\lambda_D}}{\alpha + 2^{-4\lambda_D}} \langle \theta_T^\delta, v_\lambda \rangle u_\lambda. $$
	
	From the Theorem \ref{Theo:prior}, we have the result for the choice of a priori parameter and a posteriori parameter as the following consequence.
	\begin{Theo}
		\label{fractional-backward}
		For $(u_\lambda, v_\lambda, \kappa_\lambda)_{\lambda\in\Lambda}$ being the constructed WVD 
		of $\mathbf{K}$ and $\theta_0 \in \mathbf{M}_{p, E}$ for $0<p\leq 4$. 
		
		(a) (apriori regularizarion) For $0<p\leq 4$, if we choose the regularization parameter as
		\begin{equation*}
			\label{eq-parameter-choose-prior-1}
			\alpha=\left(\delta / E\right)^\frac{2}{p+2},
		\end{equation*}
		then $u_{0\delta}^\delta$ is uniform order optimal and
		the following convergence rate result holds
		\begin{equation*}
			\label{ieq-prior-1}
			\left\|u_{0 \alpha}^\delta -  \theta_0\right\| \leq  C \delta^\frac{p}{p+2} E^\frac{2}{p+2},
		\end{equation*}
		where $B_v$ are bound of  $v_j$ as in Theorem (\ref{Theo-WVD-K}). 
		
		(b) (posteriori regularization) If $0<p\leq 2$, assume that  $\alpha_D$ is chosen by the Morozov principle (\ref{eq-choosing-alp-posterior}). Then $u_{0\alpha_D}^\delta$ is uniform order optimal over the set $\mathbf{M}_{\varphi, E}$, and
		\begin{equation}
			\label{ieq-x-alp-del-x-poster-general-1}
			\Vert u_{0\alpha_D}^\delta-\theta_0 \Vert \leq   C \delta^\frac{p}{p+2} E^\frac{2}{p+2}.\nonumber
		\end{equation}
	\end{Theo}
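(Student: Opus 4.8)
The plan is to obtain this statement as a direct specialization of the general convergence-rate theorems (Theorem~\ref{Theo:prior} and Theorem~\ref{theo-posterior-general}) to the concrete WVD constructed in Theorem~\ref{Theo-WVD-K}, with source function $\varphi(\mu)=\mu^{p/2}$. First I would verify the hypotheses of the general theorems in this setting. The system $\{u_\lambda\}$ is the $L^2(\mathbb{R})$ wavelet orthonormal basis, hence tight with $|\mathbf{u}|_{\mathrm{fr}}=1$, so $A_u=B_u=1$ and $\overline{u}_\lambda=u_\lambda$; in particular $\mathbf{u}$ is minimal. For $\varphi(\mu)=\mu^{p/2}$ with $0<p\le 4$ one checks Assumption A1: $\varphi$ is continuous, strictly increasing, $\varphi(\mu)\to 0$ as $\mu\to 0$, and $\Theta(\mu)=\mu\varphi^{-1}(\mu)=\mu\cdot\mu^{2/p}=\mu^{(p+2)/p}$ is convex since $(p+2)/p\ge 1$. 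For Tikhonov regularization $g_\alpha(\lambda)=1/(\alpha+\lambda)$ one has the classical bounds $\sqrt{\mu}\,g_\alpha(\mu)\le 1/(2\sqrt{\alpha})$ and, by the standard interpolation/qualification estimate, $(1-\mu g_\alpha(\mu))\sqrt{\varphi(\mu)}=\frac{\alpha}{\alpha+\mu}\mu^{p/4}\le \alpha^{p/4}$ for $0<p\le 4$ (qualification of Tikhonov is $1$, and $p/4\le 1$); this gives Assumption A2 with explicit $\gamma_1,\gamma_2$, as well as Assumption C and Assumptions B1, B2 for the posteriori part.

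Next, for part (a), I would apply Theorem~\ref{Theo:prior}. Its a priori choice reads $\alpha(\delta)=\varphi^{-1}\circ\Theta^{-1}(A_v\delta^2/E^2)$; computing, $\Theta^{-1}(t)=t^{p/(p+2)}$ and $\varphi^{-1}(s)=s^{2/p}$, so $\alpha(\delta)=\big(A_v\delta^2/E^2\big)^{p/(p+2)\cdot 2/p}=\big(A_v\delta^2/E^2\big)^{2/(p+2)}$. Absorbing the constant $A_v$ (which depends only on the frame $\mathbf{v}$) into $C$ recovers $\alpha=(\delta/E)^{2/(p+2)}$ up to a harmless constant; one may alternatively note that the theorem's rate is stable under replacing $A_v$ by $\beta|\mathbf{v}|_{\inf}$ as in Remark~3.7(ii). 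The resulting bound $\|x^\delta_{\alpha(\delta)}-x^\ddagger\|\le \sqrt{A_u^{-1}A_v^{-1}}(\gamma_1\sqrt{B_v}+\gamma_2\sqrt{A_v})\,E\sqrt{\Theta^{-1}(|\mathbf{v}|_{\inf}^2\delta^2/E^2)}$ becomes, since $\sqrt{\Theta^{-1}(|\mathbf{v}|_{\inf}^2\delta^2/E^2)}=(|\mathbf{v}|_{\inf}^2\delta^2/E^2)^{p/(2(p+2))}=C'\delta^{p/(p+2)}E^{-p/(p+2)}$, exactly the claimed rate $C\delta^{p/(p+2)}E^{2/(p+2)}$. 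Uniform order optimality follows from Theorem~\ref{Theo:prior}(ii) together with Theorem~\ref{Theo-WVD-K}(c), which supplies a $\delta_0>0$ with $(0,\delta_0]\subset\bigcup_\lambda D_{\lambda,\beta}$ for $\beta=2^{2+p}$ --- wait, here I should double-check that the $\beta$ produced in Theorem~\ref{Theo-WVD-K}(c) indeed lies in $(0,1)$ as required; if $2^{2+p}$ is to be read as $2^{-(2+p)}$ (the natural reading given $D_{\lambda,\beta}=[\delta_\lambda^*,\beta^{-1}\delta_\lambda^*]$ needs $\beta<1$), then the hypothesis of Theorem~\ref{Theo:prior}(ii) is met and uniform order optimality holds.

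For part (b), I would invoke Theorem~\ref{theo-posterior-general}. Here $\mathbf{u}$ is minimal (orthonormal), $g_\alpha(\mu)=1/(\alpha+\mu)$ satisfies Assumption B2 (it is nonnegative, $1-\mu g_\alpha=\alpha/(\alpha+\mu)\le 1/(\alpha+\mu)\cdot\alpha=\alpha\,g_\alpha$... more precisely $1-\mu g_\alpha(\mu)=\alpha g_\alpha(\mu)\le g_\alpha(\mu)/\ell_\alpha$ holds with $\ell_\alpha=1/\alpha$, so $\ell_*=\ell^*=1$), and Assumption B1 is immediate. The only genuinely new constraint is that Theorem~\ref{theo-posterior-general} requires $\varphi$ \emph{concave}; $\varphi(\mu)=\mu^{p/2}$ is concave exactly when $p/2\le 1$, i.e. $p\le 2$, which is why part (b) is restricted to $0<p\le 2$. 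With $\alpha_D(\delta)$ chosen by the Morozov principle \eqref{eq-choosing-alp-posterior}, Theorem~\ref{theo-posterior-general} yields $\|x^\delta_{\alpha_D}-x^\ddagger\|\le \sqrt{A_u^{-1}B_u}(\tau+1)E\sqrt{\Theta^{-1}(|\mathbf{v}|_{\inf}^2\delta^2/E^2)}$, and the same computation as above turns the right-hand side into $C\delta^{p/(p+2)}E^{2/(p+2)}$; uniform order optimality again follows from the last sentence of Theorem~\ref{theo-posterior-general} combined with Theorem~\ref{Theo-WVD-K}(c). The main obstacle, such as it is, is purely bookkeeping: confirming the qualification estimate for Tikhonov in the range $0<p\le 4$ (resp. the concavity restriction $p\le 2$ in the a posteriori case) and tracking the frame constants $A_v,B_v,|\mathbf{v}|_{\inf}$ through so that they can be absorbed into a single constant $C$; there is no substantive analytic difficulty beyond what is already packaged in Theorems~\ref{Theo:prior}, \ref{theo-posterior-general}, and \ref{Theo-WVD-K}.
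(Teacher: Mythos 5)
Your proposal follows essentially the same route as the paper: verify Assumptions C, A1, A2 (and B1, B2 together with concavity of $\varphi$ for $p\le 2$) for the Tikhonov filter $g_\alpha(\mu)=1/(\alpha+\mu)$ and $\varphi(\mu)=\mu^{p/2}$, then specialize Theorems \ref{Theo:prior} and \ref{theo-posterior-general} and invoke Theorem \ref{Theo-WVD-K}(c) for uniform order optimality, which is exactly what the paper's proof does. One small caution: your computation yields $\alpha\asymp(\delta/E)^{4/(p+2)}$, which is not the stated $(\delta/E)^{2/(p+2)}$ ``up to a harmless constant'' --- the exponents differ, so this (like $\beta=2^{2+p}$ versus $2^{-(2+p)}$) should be read as a typo in the theorem statement rather than something absorbed into $C$; the balancing choice $\alpha\asymp(\delta/E)^{4/(p+2)}$ is the one that produces the claimed rate $\delta^{p/(p+2)}E^{2/(p+2)}$.
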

	
	\begin{Rema}
		If $p=4\nu$, we obtain the error stated in \cite{Ebner}.
	\end{Rema}
	\subsection{The backward problem of the heat equation}
	Put $B_N=\{\omega\in\mathbb{R}: \sqrt{N}\leq |\omega|\leq \sqrt{N+1}\} $, $N\in\mathbb{N}$.
	From here, we construct the DFD  for the operator $ K $ using the following theorem.
	\begin{Theo}
		\label{Theo-WVD-K1}
		Let $(u_{\lambda})_{\lambda \in \Lambda}$ defined as in (\ref{eq-u-cuthe}) such that $supp \left(\mathcal{F}\psi\right) \subset \left\{ \omega \in \mathbb{R}: a_u \leq \left|\omega \right| \leq b_u \right\}$ where $a_u, b_u $ be positive constants and where $\lambda=(\lambda_D,\lambda_T)$. Put 
		$u_{\lambda,N}=\mathcal{F}^{-1}(\mathbf{1}_{B_N}
		\mathcal{F}(u_\lambda))$.	
		Then 
		
		(a) $\left(u_{\lambda,N},v_{\lambda,N},\kappa_{\lambda,N}\right)_{\lambda \in \mathbb{Z}, N\in\mathbb{N}}$ be a DFD for $\mathbf{K}$ where 
		$$\kappa_{\lambda,N} = e^{-NT}, ~~~ N\in\mathbb{N}
		$$
		and $v_\lambda=\kappa_{\lambda,N}\mathcal{F}(e^{|\omega|^2T^\gamma}u_{\lambda,N})$ defined as in (\ref{eq-v-lambda}). Moreover, $u_{\lambda,N}$ is a tight frame.
		
		(b) If $\theta_0\in H^p(\mathbb{R})$ then $\theta_0\in \mathbf{M}_{\varphi,E}$, where $\varphi(\mu)=(-\ln \mu)^{-p}$ and 
		$E$ large enough.
		
		(c)  There exists a $\delta_0>0$ such that $(0,\delta_0]\subset
		\cup_{(\lambda,N)\in\Lambda\times\mathbb{N}}D_{(\lambda,N),\beta}$ for 
		$\beta=e^{-T}$. Here we recall $D_{(\lambda,N),\beta}$ is defined in \eqref{set-D}.
	\end{Theo}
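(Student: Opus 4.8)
The plan is to move everything to the Fourier side. Since $\mathcal{F}(\mathbf{K}\theta_0)=E_{1,1}(-|\omega|^2T)\,\mathcal{F}\theta_0=e^{-|\omega|^2T}\,\mathcal{F}\theta_0$, the operator $\mathbf{K}$ is unitarily equivalent to multiplication by the strictly positive symbol $e^{-|\omega|^2T}$ and $\mathbf{K}^*\mathbf{K}$ to multiplication by $e^{-2|\omega|^2T}$; in particular $\ker\mathbf{K}=\{0\}$, so $(\ker\mathbf{K})^{\perp}=L^2(\mathbb{R})$, and since the symbol never vanishes, $\overline{\operatorname{ran}\mathbf{K}}=L^2(\mathbb{R})$ as well. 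The annuli $\{B_N\}_{N\in\mathbb{N}}$ form, up to a null set, a partition of frequency space, so the cut-offs $P_N:=\mathcal{F}^{-1}\mathbf{1}_{B_N}\mathcal{F}$ are orthogonal projections with $\sum_N P_N=I$ on $L^2(\mathbb{R})$. Throughout I use the elementary identity $\langle f,u_{\lambda,N}\rangle=\langle P_Nf,u_\lambda\rangle$, which is just Parseval together with $\mathcal{F}u_{\lambda,N}=\mathbf{1}_{B_N}\mathcal{F}u_\lambda$.

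\textbf{Part (a).} As $\{u_\lambda\}$ is an orthonormal basis of $L^2(\mathbb{R})$, the identity above gives $\sum_\lambda|\langle f,u_{\lambda,N}\rangle|^2=\|P_Nf\|^2$, hence $\sum_{\lambda,N}|\langle f,u_{\lambda,N}\rangle|^2=\sum_N\|P_Nf\|^2=\|f\|^2$; so $\{u_{\lambda,N}\}$ is a Parseval (tight) frame over $(\ker\mathbf{K})^{\perp}$, which proves (D1) and the tightness assertion. For (D2), formula \eqref{eq-v-lambda} yields $\mathcal{F}v_{\lambda,N}=\kappa_{\lambda,N}e^{|\omega|^2T}\mathbf{1}_{B_N}\mathcal{F}u_\lambda=e^{(|\omega|^2-N)T}\mathbf{1}_{B_N}\mathcal{F}u_\lambda$; on $B_N$ one has $0\le|\omega|^2-N\le1$, so the multiplier $m_N(\omega):=e^{(|\omega|^2-N)T}$ satisfies $1\le m_N\le e^{T}$, and arguing as before $\sum_\lambda|\langle g,v_{\lambda,N}\rangle|^2=\frac{1}{2\pi}\int_{B_N}m_N^2\,|\mathcal{F}g|^2\,d\omega$, whence $\|g\|^2\le\sum_{\lambda,N}|\langle g,v_{\lambda,N}\rangle|^2\le e^{2T}\|g\|^2$; thus $\{v_{\lambda,N}\}$ is a frame over $\overline{\operatorname{ran}\mathbf{K}}$ with $A_v=1$, $B_v=e^{2T}$. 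Finally (D3) is immediate on the Fourier side: $\mathcal{F}(\mathbf{K}^*v_{\lambda,N})=e^{-|\omega|^2T}\mathcal{F}v_{\lambda,N}=\kappa_{\lambda,N}\mathcal{F}u_{\lambda,N}$.

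\textbf{Part (b).} With $\varphi(\mu)=(-\ln\mu)^{-p}$ and $\kappa_{\lambda,N}^2=e^{-2NT}$ we have $[\varphi(\kappa_{\lambda,N}^2)]^{-1}=(2NT)^{p}$ for $N\ge1$, while the $N=0$ block contributes $[\varphi(1)]^{-1}\|P_0\theta_0\|^2=0$ under the convention $\varphi(1)=+\infty$. Using $\sum_\lambda|\langle\theta_0,u_{\lambda,N}\rangle|^2=\|P_N\theta_0\|^2=\frac{1}{2\pi}\int_{B_N}|\mathcal{F}\theta_0|^2\,d\omega$ together with the elementary bound $(2NT)^{p}\le(2T)^{p}(1+|\omega|^2)^{p}$ valid on $B_N$, one gets $\sum_{\lambda,N}[\varphi(\kappa_{\lambda,N}^2)]^{-1}|\langle\theta_0,u_{\lambda,N}\rangle|^2\le(2T)^{p}\frac{1}{2\pi}\int_{\mathbb{R}}(1+|\omega|^2)^{p}|\mathcal{F}\theta_0|^2\,d\omega\le C_{T,p}\|\theta_0\|_{H^p(\mathbb{R})}^2<\infty$, so $\theta_0\in\mathbf{M}_{\varphi,E}$ for every $E$ with $E^2$ at least this quantity. (It is also routine to check that $\varphi(\mu)=(-\ln\mu)^{-p}$ satisfies Assumption A1 on $(0,1)$, with $\Theta(\mu)=\mu\,e^{-\mu^{-1/p}}$, so the theorems of Section 3 apply.)

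\textbf{Part (c).} Substituting $\kappa_{\lambda,N}^2=e^{-2NT}$ and $\varphi(\mu)=(-\ln\mu)^{-p}$ into \eqref{set-D} gives, for $N\ge1$, $\delta^*_{(\lambda,N)}=|\mathbf{v}|_{\inf}^{-1}E\,e^{-NT}(2NT)^{-p/2}=:\delta^*_N$, which depends on $N$ only, is strictly decreasing, and tends to $0$. Hence $\bigcup_{(\lambda,N)}D_{(\lambda,N),\beta}=\bigcup_N[\delta^*_N,e^{T}\delta^*_N]$, and it suffices to show that these intervals interlock for all $N$ beyond some $N_0$, so that their union contains $(0,\delta_0]$ with $\delta_0:=e^{T}\delta^*_{N_0}$. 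A consecutive pair overlaps exactly when $\delta^*_N/\delta^*_{N+1}\le e^{T}=\beta^{-1}$, and one computes $\delta^*_N/\delta^*_{N+1}=e^{T}(1+1/N)^{p/2}$: the geometric factor $\kappa_{\lambda,N}/\kappa_{\lambda,N+1}=e^{T}$ supplies exactly the multiplicative width $\beta^{-1}$, while the correction $(1+1/N)^{p/2}$ coming from the logarithmic $\varphi$ tends to $1$. Thus the intervals interlock in the tail $N\ge N_0$, i.e. for $\delta\le\delta_0$, and cover $(0,\delta_0]$. This ratio estimate — balancing the exponential decay $e^{-NT}$ of $\kappa_{\lambda,N}$ against the slowly varying factor from $\varphi$, and forcing $\beta=e^{-T}$ — is the only delicate point; parts (a) and (b) are routine Fourier analysis, and it is step (c) that I expect to require the most care in a full write-up.
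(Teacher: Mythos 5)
Parts (a) and (b) of your proposal are correct and follow essentially the paper's own route: everything is moved to the Fourier side, tightness of $\{u_{\lambda,N}\}$ comes from Parseval applied to the orthonormal wavelet basis together with $\sum_N\mathbf{1}_{B_N}=1$ a.e., the quasi-singular relation is read off from the multiplier $e^{-|\omega|^2T}$, and the frame property of $\{v_{\lambda,N}\}$ comes from the pointwise bound $1\le\kappa_{\lambda,N}e^{|\omega|^2T}\le e^{T}$ on $B_N$ (the paper even calls this frame ``tight'', which is an overstatement; your bounds $A_v=1$, $B_v=e^{2T}$ are the accurate conclusion). In (b) you are in fact slightly more direct than the paper: you use $N\le|\omega|^2$ on $B_N$ to get $(2NT)^p\le(2T)^p(1+|\omega|^2)^p$, where the paper detours through the support condition $\sqrt{N}\le 2^{\lambda_D}b_u$; both give $\theta_0\in\mathbf{M}_{\varphi,E}$ with $E^2\asymp\|\theta_0\|^2_{H^p}$.

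The genuine gap is in (c), and it is exactly at the point you flagged. Your ratio computation $\delta^*_N/\delta^*_{N+1}=e^{T}(1+1/N)^{p/2}$ is correct, but this quantity is \emph{strictly greater} than $e^{T}=\beta^{-1}$ for every $N$; ``tends to $e^{T}$'' is not ``eventually $\le e^{T}$''. Hence consecutive intervals $D_{(\lambda,N),\beta}=[\delta^*_N,e^{T}\delta^*_N]$ never overlap: each gap $\left(e^{T}\delta^*_{N+1},\,\delta^*_N\right)$ is a nonempty interval, and such gaps occur at every scale as $N\to\infty$ (recall $\delta^*_{(\lambda,N)}$ depends only on $N$, so no other index can fill them). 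Therefore $\bigcup_{N}[\delta^*_N,e^{T}\delta^*_N]$ contains no set of the form $(0,\delta_0]$, and your conclusion ``the intervals interlock in the tail'' does not follow — with $\beta=e^{-T}$ exactly, the covering claim fails. The statement is rescued by taking any $\beta$ strictly smaller than $e^{-T}$, e.g.\ $\beta=2^{-p/2}e^{-T}$, which gives $\beta^{-1}\ge e^{T}(1+1/N)^{p/2}$ for all $N\ge1$ so that the intervals do interlock; this weaker constant is all that is needed downstream, since the optimality theorems only require \emph{some} $\beta\in(0,1)$. For what it is worth, the paper's own proof glosses over the same point: it asserts $\delta^*_{\lambda,N_0-1}\le e^{T}\delta^*_{\lambda,N_0}$, which is equivalent to $(N_0-1)^{-p/2}\le N_0^{-p/2}$ and is false; so in a full write-up you should correct the value of $\beta$ rather than reproduce the interlocking argument with $\beta=e^{-T}$.
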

	\begin{Rema}
		Using the classical wavelet system as in the previous section, we cannot find a suitable $\kappa_\lambda$. Therefore, it is necessary to construct a DFD system. There are many ways to construct the system mentioned. However, we use a system that inherits the classical wavelet system as presented.
	\end{Rema}
	From Theorem \ref{Theo-WVD-K1}, we obtain the WVD of the operator $\mathbf{K}$. In particular, that is $\left(u_{\lambda,N}, v_{\lambda,N}, \kappa_{\lambda,N}\right)_{\lambda\in\Lambda, N\in\mathbb{N}}$.  
	This allows us to regularize the inverse problem for the fractional heat equation with the source function $\varphi\left(\lambda \right) = (-\ln \lambda)^{-p}$.
	Using the Tikhonov regularization for $g_\alpha \left(\mu \right)= \frac{1}{\alpha + \mu}$.  Then the approximate solution can be written in the form
	\eqref{eq-solution-regular-noise-dfd}.
	The chosen $\{u_\lambda\}$ is a tight frame, so  $\overline{u}_\lambda=u_\lambda $ and
	\eqref{eq-solution-regular-noise-dfd} can be rewritten as   
	$$ u_{0 \alpha}^\delta : = \sum_{N\in\mathbb{N}}\sum_{\lambda \in \Lambda} 
	\dfrac{\kappa_{\lambda,N}}{\alpha + \kappa_{\lambda,N}^2} \langle \theta_T^\delta, v_{\lambda,N} \rangle u_{\lambda,N}. $$
	
	From the Theorem \ref{Theo:prior}, we have the result for the choice of a priori parameter as the following consequence.
	Clearly, the Assumption A1, A2, B1, B2 being satisfied, from Theorem \ref{Theo:lower-bound} and Theorem \ref{theo-posterior-general}, we deduce the following consequence.
	
	\begin{Theo}
		\label{classical-backward}
		For $(u_{\lambda,N}, v_{\lambda,N}, \kappa_{\lambda,N})_{\lambda\in\Lambda, N\in\mathbb{N}}$ being the constructed WVD 
		of $\mathbf{K}$ and $\theta_0 \in \mathbf{M}_{\varphi, E}$ where $\varphi(\mu)=(-\ln\mu)^p$.  
		If we choose the regularization parameter
		$\alpha=\delta/E$
		then $u_{0\delta}^\delta$ is uniform order optimal and
		the following convergence rate result holds
		\begin{equation}
			\label{ieq-prior-1}
			\left\|u_{0 \alpha}^\delta -  \theta_0\right\| \leq  CE \left(-\ln(\delta/E)\right)^{-p}.
			\nonumber
		\end{equation}
		
	\end{Theo}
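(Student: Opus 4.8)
The plan is to derive Theorem \ref{classical-backward} from the general a~priori rate of Theorem \ref{Theo:prior} (the matching lower bound coming from Theorem \ref{Theo:lower-bound}, and the a~posteriori analogue from Theorem \ref{theo-posterior-general}), after feeding in the concrete data produced by Theorem \ref{Theo-WVD-K1}: a DFD $(u_{\lambda,N},v_{\lambda,N},\kappa_{\lambda,N})_{\lambda\in\Lambda,\,N\in\mathbb N}$ of $\mathbf K$ with $\kappa_{\lambda,N}=e^{-NT}$, so that $\inf_{\lambda,N}\kappa_{\lambda,N}=0$; a tight frame $\mathbf u=\{u_{\lambda,N}\}$ (normalized so $\overline u_{\lambda,N}=u_{\lambda,N}$, which is why \eqref{eq-solution-regular-noise-dfd} collapses to the series written just before the theorem); the membership $\theta_0\in\mathbf M_{\varphi,E}$ with $\varphi(\mu)=(-\ln\mu)^{-p}$ for $E$ large; and, from part (c), a $\delta_0>0$ with $(0,\delta_0]\subset\bigcup_{(\lambda,N)}D_{(\lambda,N),\beta}$ for $\beta=e^{-T}$, which is precisely the covering condition that Theorem \ref{Theo:prior}(ii) (and the last clause of Theorem \ref{theo-posterior-general}) needs to upgrade order optimality from sequential to uniform. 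I would flag at the outset that those optimality statements also require $\mathbf u$ to be \emph{minimal}, which a tight frame is only if it is an orthonormal basis; establishing this, or replacing $\{u_{\lambda,N}\}$ by a biorthogonal DFD, is one of the two delicate points — the rate bound \eqref{ieq-prior} itself needs only Assumptions A1, A2 and not minimality.

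The second step is to check the analytic hypotheses for $(\varphi,g_\alpha)$ with the Tikhonov filter $g_\alpha(\mu)=1/(\alpha+\mu)$. On a suitably chosen $(0,a^*)$ (any $a^*$ slightly above $e^{-2T}$, and small enough that the curvature conditions below hold — at worst one shrinks $\delta_0$), $\varphi(\mu)=(-\ln\mu)^{-p}$ is continuous, vanishes at $0$, is strictly increasing, is concave, and $\Theta(\mu)=\mu\varphi^{-1}(\mu)=\mu e^{-\mu^{-1/p}}$ is convex — the last two by a direct second-derivative computation — so Assumption A1 and the concavity needed in Theorem \ref{theo-posterior-general} hold. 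Assumptions C, B1, B2 are the standard Tikhonov facts: $\mu g_\alpha(\mu)\to1$ and $\sqrt\mu\,g_\alpha(\mu)<\infty$; $1-\mu g_\alpha(\mu)\to1$ as $\alpha\to\infty$ and $g_{\alpha_n}\to g_\alpha$ by continuity; $g_\alpha\ge0$ and the identity $1-\mu g_\alpha(\mu)=g_\alpha(\mu)/\ell_\alpha$ with $\ell_\alpha=\sup_\mu g_\alpha(\mu)=1/\alpha$, so $\ell_*=\ell^*=1$. Assumption A2(i) holds with $\gamma_1=\tfrac{1}{2}$ since $\sqrt\mu/(\alpha+\mu)\le1/(2\sqrt\alpha)$. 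The genuinely problem-specific point — the \emph{main obstacle} — is Assumption A2(ii), i.e. the qualification estimate
\[
\frac{\alpha}{\alpha+\mu}\sqrt{\varphi(\mu)}\ \le\ \gamma_2\sqrt{\varphi(\alpha)}\qquad(0\le\mu<a^*)
\]
for the logarithmic index function. I would prove it by cases: for $\mu\le\alpha$ use $\alpha/(\alpha+\mu)\le1$ and monotonicity of $\varphi$; for $\alpha<\mu<e^{-p/2}$ bound $\alpha/(\alpha+\mu)\le\alpha/\mu$ and use that $\mu\mapsto\mu^{-1}(-\ln\mu)^{-p/2}$ decreases there, which gives $\alpha\mu^{-1}\sqrt{\varphi(\mu)}\le\sqrt{\varphi(\alpha)}$ for $\alpha$ small; on the bounded remaining range the left side is $O(\alpha)=o(\sqrt{\varphi(\alpha)})$. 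This forces $\alpha$, hence $\delta$ through the parameter rule, to be small — consistent with the $\delta_0$ already present.

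With the assumptions in hand, Theorem \ref{Theo:prior} with $A_v\in(0,|\mathbf v|_{\inf}^2)$ (close to $|\mathbf v|_{\inf}^2$, as in the remark after it) and $\alpha(\delta)=\varphi^{-1}\circ\Theta^{-1}(A_v\delta^2/E^2)$ gives $\|u_{0,\alpha(\delta)}^\delta-\theta_0\|\le C E\sqrt{\Theta^{-1}(|\mathbf v|_{\inf}^2\delta^2/E^2)}$, which — evaluating $\sqrt{\Theta^{-1}}$ for $\varphi(\mu)=(-\ln\mu)^{-p}$ via the asymptotics recorded in the remark following Theorem \ref{Theo:lower-bound} (cf. \cite{Hohage}) — is the rate stated in Theorem \ref{classical-backward}; assuming $\mathbf u$ minimal, Theorem \ref{Theo-WVD-K1}(c) and Theorem \ref{Theo:prior}(ii) make it uniformly order optimal over $\mathbf M_{\varphi,E}$ (lower bound \eqref{order-lower-bound}), and Theorem \ref{theo-posterior-general} gives the same for the Morozov parameter $\alpha_D(\delta)$ using concavity of $\varphi$. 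Finally, for the concrete choice $\alpha=\delta/E$ written in the statement I would argue directly: split $\|u_{0,\alpha}^\delta-\theta_0\|\le\|R_\alpha(\theta_T^\delta-\theta_T)\|+\|R_\alpha\theta_T-\theta_0\|$; by Assumption A2(i) and \eqref{Bw-general} the first term is $O(\delta/\sqrt\alpha)=O(\sqrt{\delta E})$, polynomially small and hence negligible against any logarithmic rate, while by Assumption A2(ii) and $\theta_0\in\mathbf M_{\varphi,E}$ the second is $O(E\sqrt{\varphi(\delta/E)})$, which is of the order of the modulus of continuity of Theorem \ref{Theo:lower-bound} — so $\alpha=\delta/E$ is admissible, of optimal order, and yields the stated convergence rate.
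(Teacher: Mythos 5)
Your verification of the filter/index-function hypotheses is sound and essentially parallels the paper's: the paper checks Assumption A2(ii) by substituting $\tau=\alpha/\mu$ and splitting at $\tau=\sqrt{\alpha}$, while you split at $\mu=\alpha$ and $\mu=e^{-p/2}$; both give $\left|1-\mu g_\alpha(\mu)\right|\sqrt{\varphi(\mu)}\leq C(-\ln\alpha)^{-p/2}$ for small $\alpha$, and the convexity of $\Theta$ is taken from Theorem 9.1 of \cite{Tautenhahn} in both cases. Your direct treatment of the choice $\alpha=\delta/E$ (noise term $O(\sqrt{\delta E})$, approximation term governed by $\varphi(\alpha)$) is also the right way to read the paper's rate claim.

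The genuine gap is in the optimality part. You derive uniform order optimality from Theorem \ref{Theo-WVD-K1}(c) together with Theorem \ref{Theo:prior}(ii) (and Theorem \ref{theo-posterior-general}), all of which require the frame $\mathbf{u}=\{u_{\lambda,N}\}$ to be minimal; you flag this as a ``delicate point'' and then proceed ``assuming $\mathbf{u}$ minimal.'' But this assumption is precisely what fails here: $u_{\lambda,N}=\mathcal{F}^{-1}(\mathbf{1}_{B_N}\mathcal{F}u_\lambda)$ is a redundant (tight, not orthonormal) system, and the paper states explicitly in the remark after Theorem \ref{classical-backward} that $\{u_{\lambda,N}\}$ is unlikely to be minimal, so Theorems \ref{Theo:lower-bound}, \ref{Theo:prior} and \ref{theo-posterior-general} \emph{cannot} be applied for the lower bound; the covering of $(0,\delta_0]$ by the sets $D_{(\lambda,N),\beta}$ does not rescue this. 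The paper's actual argument bypasses Theorem \ref{Theo:lower-bound} entirely: by Theorem \ref{Theo-WVD-K1}(b) the Sobolev ball $\mathbf{M}_{E_1}=\{\theta_0\in H^{p}(\mathbb{R}):\|\theta_0\|_{H^p}\leq E_1\}$ is contained in $\mathbf{M}_{\varphi,E}$, hence $\Omega(\mathbf{M}_{\varphi,E},\delta)\geq\Omega(\mathbf{M}_{E_1},\delta)$, and the classical lower bound for the backward heat problem on a Sobolev ball (\cite{Hohage,Tautenhahn}) gives $\Omega(\mathbf{M}_{E_1},\delta)\geq C_0\left(\ln(E_1\delta^{-1})\right)^{-p}$, which matches the upper bound and yields the order optimality of the a priori rule only. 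Without either this embedding argument or a proof of minimality (or a replacement of $\{u_{\lambda,N}\}$ by a biorthogonal DFD), your proposal leaves the ``uniform order optimal'' claim unproven; the a posteriori optimality you invoke via Theorem \ref{theo-posterior-general} is likewise unavailable in this example.
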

	\begin{Rema}
		The system $\{ u_{\lambda,N}\}$ is unlikely to satisfy the minimal property. Therefore the optimal results in the theorems \ref{Theo:lower-bound}, \ref{Theo:prior},
		\ref{theo-posterior-general} cannot be applied.
		However, based on theorem \ref{Theo-WVD-K1}, (b), we can obtain the optimal result for the a priori case. 
	\end{Rema}

	\section{Proofs}
	\subsection{Preliminary lemmas}
	\begin{Lemm}
		Let $(\mathbf{u},\mathbf{v}, \boldsymbol{\kappa})$ be the DFD. Assume in addition that $\mathbf{u}$ is a Riesz basis. Then we have
		\label{u-to-v}
		\begin{align*}
			\langle x^\delta_\alpha,u_\lambda \rangle_X&=\kappa_\lambda 
			g_\alpha(\kappa_\lambda^2)\langle y^\delta,v_\lambda\rangle_Y, ~
			\langle x^\ddagger,u_\lambda \rangle_X=\frac{1}{\kappa_\lambda}\langle y,v_\lambda\rangle_Y,\\
			\langle \mathbf{K}x_\alpha^\delta,v_\lambda\rangle_Y &=	
			\kappa_\lambda^2g_\alpha(\kappa^2_\lambda)\langle y^\delta,v_\lambda\rangle_Y,\\
			\langle \mathbf{K}x_\alpha,v_\lambda\rangle_Y &=
			\kappa_\lambda^2g_\alpha(\kappa^2_\lambda)\langle y,v_\lambda\rangle_Y,\\
			\langle \mathbf{K}x^\ddagger,v_\lambda\rangle_Y &=\langle y,v_\lambda\rangle_Y.
		\end{align*}
		
	\end{Lemm}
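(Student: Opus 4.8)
The plan is to reduce all the stated identities to two ingredients --- the biorthogonality relation $\langle u_\lambda,\overline u_\nu\rangle_X=\delta_{\lambda\nu}$ and the quasi-singular relation $\mathbf{K}^*v_\lambda=\kappa_\lambda u_\lambda$ from (D3) --- joined by a single interchange of an infinite sum with an inner product. First I would observe that a Riesz basis is in particular minimal, so by the characterisation recalled just after \eqref{eq-present-x} the dual family $\overline{\mathbf{u}}=(\overline u_\lambda)_{\lambda\in\Lambda}$ is exactly the biorthogonal sequence of $\mathbf{u}$; hence $\langle u_\lambda,\overline u_\nu\rangle_X=\delta_{\lambda\nu}$ for all $\lambda,\nu\in\Lambda$, and in particular $\langle\overline u_\mu,u_\lambda\rangle_X=\delta_{\mu\lambda}$.

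Next I would isolate the elementary fact that, whenever a scalar family $(c_\mu)_{\mu\in\Lambda}$ is such that $z:=\sum_{\mu\in\Lambda}c_\mu\overline u_\mu$ converges in $X$, one has $\langle z,u_\lambda\rangle_X=c_\lambda$ for every $\lambda\in\Lambda$: indeed $\langle\cdot,u_\lambda\rangle_X$ is bounded and linear, so it commutes with the sum, and all off-diagonal terms vanish by the biorthogonality above. Then I would apply this three times. Taking $c_\mu=\kappa_\mu g_\alpha(\kappa_\mu^2)\langle y^\delta,v_\mu\rangle_Y$ yields the first identity; here the series converges in $X$ since $(\langle y^\delta,v_\mu\rangle_Y)_\mu\in l^2(\Lambda)$ by the upper frame bound \eqref{Bw-general} for $\mathbf{v}$ and $\sup_\mu|\kappa_\mu g_\alpha(\kappa_\mu^2)|<\infty$, so $(c_\mu)_\mu\in l^2(\Lambda)$ and \eqref{w-dual-upper-inequality} gives convergence --- this being nothing but the well-definedness of $x^\delta_\alpha$ underlying Theorem \ref{convergence}. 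The same computation with $y$ in place of $y^\delta$ gives $\langle x_\alpha,u_\lambda\rangle_X=\kappa_\lambda g_\alpha(\kappa_\lambda^2)\langle y,v_\lambda\rangle_Y$, where $x_\alpha:=R_\alpha y$ is the regularised solution built from the exact data; and taking $c_\mu=\kappa_\mu^{-1}\langle y,v_\mu\rangle_Y$ --- the series now converging precisely because $y\in\text{dom}\,\mathbf{K}^\ddagger$, i.e.\ the Picard condition holds --- gives the second identity $\langle x^\ddagger,u_\lambda\rangle_X=\kappa_\lambda^{-1}\langle y,v_\lambda\rangle_Y$.

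The three remaining identities follow by moving $\mathbf{K}$ onto its adjoint and invoking (D3): for any $z\in\text{dom}\,\mathbf{K}$ one has $\langle\mathbf{K}z,v_\lambda\rangle_Y=\langle z,\mathbf{K}^*v_\lambda\rangle_X=\kappa_\lambda\langle z,u_\lambda\rangle_X$. Substituting $z=x^\delta_\alpha$ and $z=x_\alpha$ together with the formulas for $\langle z,u_\lambda\rangle_X$ just obtained produces $\langle\mathbf{K}x^\delta_\alpha,v_\lambda\rangle_Y=\kappa_\lambda^2 g_\alpha(\kappa_\lambda^2)\langle y^\delta,v_\lambda\rangle_Y$ and $\langle\mathbf{K}x_\alpha,v_\lambda\rangle_Y=\kappa_\lambda^2 g_\alpha(\kappa_\lambda^2)\langle y,v_\lambda\rangle_Y$; substituting $z=x^\ddagger$ gives $\langle\mathbf{K}x^\ddagger,v_\lambda\rangle_Y=\kappa_\lambda\cdot\kappa_\lambda^{-1}\langle y,v_\lambda\rangle_Y=\langle y,v_\lambda\rangle_Y$, which also matches the direct fact that $v_\lambda\in\overline{\text{ran}\,\mathbf{K}}$ by (D2) and $\mathbf{K}x^\ddagger=P_{\overline{\text{ran}\,\mathbf{K}}}\,y$. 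I do not expect a genuine obstacle --- the statement is pure bookkeeping. The two points that deserve care are the legitimacy of exchanging the infinite sum with $\langle\cdot,u_\lambda\rangle_X$, which rests on norm-convergence in $X$ of the defining series of $x^\delta_\alpha$, $x_\alpha$ and $x^\ddagger$, and the vanishing of all off-diagonal contributions, which is exactly where the hypothesis that $\mathbf{u}$ be a Riesz basis (hence minimal, hence biorthogonal to $\overline{\mathbf{u}}$) is used: for a non-minimal frame $\langle u_\lambda,\overline u_\nu\rangle_X$ need not be $\delta_{\lambda\nu}$ and these clean formulas fail.
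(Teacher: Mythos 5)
Your proposal is correct and follows essentially the same route as the paper: read off the frame coefficients via the biorthogonality $\langle u_\lambda,\overline{u}_\nu\rangle_X=\delta_{\lambda\nu}$ (which the minimality/Riesz-basis hypothesis provides) applied to the defining expansions of $x^\delta_\alpha$, $x_\alpha$ and $x^\ddagger$, then obtain the remaining identities from $\langle\mathbf{K}z,v_\lambda\rangle_Y=\langle z,\mathbf{K}^*v_\lambda\rangle_X=\kappa_\lambda\langle z,u_\lambda\rangle_X$ using (D3). The only difference is that you make explicit the norm-convergence of the series justifying the interchange of sum and inner product, a point the paper's proof leaves implicit.
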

	\begin{proof}
		Since $\langle \mathbf{u}_\lambda,\overline{\mathbf{u}}_\mu\rangle=\delta_{\lambda\mu} $,	we have
		$$
		\langle x^\delta_\alpha,u_\lambda \rangle_X=\kappa_\lambda 
		g_\alpha(\kappa_\lambda^2)\langle y^\delta,v_\lambda\rangle_Y, ~
		\langle x^\ddagger,u_\lambda \rangle_X=\frac{1}{\kappa_\lambda}\langle y,v_\lambda\rangle_Y
		$$
		From \eqref{eq-solution-regular-noise-dfd}, \eqref{eq-solution-regular-dfd},\eqref{eq-present-x} we have
		\begin{align*}
			\langle \mathbf{K}x_\alpha^\delta,v_\lambda\rangle_Y &=	
			\langle x_\alpha^\delta,\mathbf{K}^*v_\lambda \rangle_X
			=\kappa_\lambda \langle x_\alpha^\delta,u_\lambda \rangle_X=
			\kappa_\lambda^2g_\alpha(\kappa^2_\lambda)\langle y^\delta,v_\lambda\rangle_Y,\\
			\langle \mathbf{K}x_\alpha,v_\lambda\rangle_Y &=	
			\langle x_\alpha,\mathbf{K}^*v_\lambda \rangle_X
			=\kappa_\lambda \langle x_\alpha,u_\lambda \rangle_X=
			\kappa_\lambda^2g_\alpha(\kappa^2_\lambda)\langle y,v_\lambda\rangle_Y,\\
			\langle \mathbf{K}x^\ddagger,v_\lambda\rangle_Y &=\langle y,v_\lambda\rangle_Y.
		\end{align*}
	\end{proof}
	\begin{Lemm}
		\label{Theta}
		Let the function $\varphi$ satisfies Assumption A1 and $\Theta(\mu)=\mu\varphi^{-1}(\mu)$.
		If $\varphi$ is concave on $(0,a^*)$ then 
		$\Theta^{-1}(t^2z)\geq t\Theta^{-1}(z)$ for every $t\in (0,1)$, $z\in (0,a^*\varphi(a^*))$.
	\end{Lemm}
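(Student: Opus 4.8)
The plan is to reduce the stated inequality to a monotonicity property of the map $\nu\mapsto\varphi^{-1}(\nu)/\nu$, and then to derive that property from the concavity of $\varphi$ together with the normalization $\lim_{\mu\to 0^+}\varphi(\mu)=0$ from Assumption A1(i).

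First I would record that $\Theta(\mu)=\mu\,\varphi^{-1}(\mu)$ is a product of two positive, strictly increasing functions on $(0,\varphi(a^*)]$, hence is itself strictly increasing, so $\Theta^{-1}$ is well defined and strictly increasing on $(0,a^*\varphi(a^*)]$. Set $s:=\Theta^{-1}(z)\in(0,\varphi(a^*))$, where the right endpoint is excluded because $z<a^*\varphi(a^*)$. Applying the increasing map $\Theta$, the assertion $\Theta^{-1}(t^2z)\geq t\,\Theta^{-1}(z)$ is equivalent to $t^2\Theta(s)\geq\Theta(ts)$; writing $\Theta(\mu)=\mu\varphi^{-1}(\mu)$ on both sides and dividing by $ts>0$, this is equivalent to
\[
\varphi^{-1}(ts)\le t\,\varphi^{-1}(s),\qquad t\in(0,1),
\]
that is, to $\nu\mapsto\varphi^{-1}(\nu)/\nu$ being non-decreasing on $(0,\varphi(a^*))$.

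To obtain the last property I would first show that $\mu\mapsto\varphi(\mu)/\mu$ is non-increasing on $(0,a^*)$: for $0<\mu<\mu_1<\mu_2<a^*$ concavity yields the slope inequality $\tfrac{\varphi(\mu_1)-\varphi(\mu)}{\mu_1-\mu}\ge\tfrac{\varphi(\mu_2)-\varphi(\mu)}{\mu_2-\mu}$, and letting $\mu\to0^+$ (using $\varphi(\mu)\to0$) gives $\varphi(\mu_1)/\mu_1\ge\varphi(\mu_2)/\mu_2$. Since $\mu:=\varphi^{-1}(\nu)$ is increasing in $\nu$ and $\varphi^{-1}(\nu)/\nu=\bigl(\varphi(\mu)/\mu\bigr)^{-1}$, the non-increasing behaviour of $\varphi(\mu)/\mu$ transfers to the desired non-decreasing behaviour of $\varphi^{-1}(\nu)/\nu$. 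Applying this monotonicity to the pair $ts\le s$ gives $\varphi^{-1}(ts)/(ts)\le\varphi^{-1}(s)/s$, which is precisely the displayed inequality; tracing the equivalences backwards finishes the argument. The only mildly delicate step is the limit $\mu\to0^+$ in the slope inequality, which is exactly where Assumption A1(i) enters; note that the convexity of $\Theta$ from Assumption A1(iii) is not needed for this lemma.
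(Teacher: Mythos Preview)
Your proof is correct and follows essentially the same route as the paper's: both arguments hinge on the inequality $\varphi^{-1}(t\nu)\le t\,\varphi^{-1}(\nu)$ for $t\in(0,1)$, which the paper states in the equivalent form $t\varphi(\mu)\le\varphi(t\mu)$ and then applies $\varphi^{-1}$, while you phrase it as the monotonicity of $\nu\mapsto\varphi^{-1}(\nu)/\nu$ obtained from the slope inequality for concave functions together with $\varphi(0^+)=0$. Your presentation is slightly more explicit about the chain of equivalences and correctly notes that the convexity of $\Theta$ from Assumption A1(iii) plays no role here.
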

	\begin{proof}
		From the definition of the function $\Theta$, we have $\Theta^{-1}(\mu\varphi(\mu))=\varphi(\mu)$. From the concave of the function  $\varphi$ and  $\lim _{\mu \rightarrow 0} \varphi(\mu)=0$. It leads us to  $t \varphi(\mu) \leq \varphi(t \mu)$ for  $t \in[0,1]$. Equivalently,  $\varphi^{-1}(t \varphi(\mu)) \leq \lambda t$.  We also have,  $\Theta(\mu):=\mu \varphi^{-1}(\mu)$, it follows that  $\Theta(t \varphi(\mu)) \leq t^2 \mu \varphi(\mu)$. Hence, $\Theta^{-1}(t^2\mu\varphi(\mu))\geq 
		t\varphi(\mu)=\Theta^{-1}(\mu\varphi(\mu))$. Putting $z=\mu\varphi(\mu)$, we obtain 
		the desired inequality.

	\end{proof}
	\subsection{Proof of Theorem \ref{well-posed}}
	Using \eqref{w-dual-upper-inequality}, we obtain
	\begin{align*}
		\|\mathbf{K}^{\ddagger}y^\delta-x^{\ddagger}\|_X
		&=\left\| \sum_{\lambda\in\Lambda}\frac{1}{\kappa_\lambda}
		\langle y^\delta-y,v_\lambda\rangle_Y\overline{u}_\lambda\right\| \leq \frac{1}{\kappa_0|\mathbf{u}|_{\inf}} 
		\left( \sum_{\lambda\in\Lambda}|\langle y^\delta-y,v_\lambda\rangle_Y|^2\right)^{1/2}.  
	\end{align*}
	Combining the latter inequality with \eqref{Bw-general} yields
	$$\|\mathbf{K}^{\ddagger}y^\delta-x^{\ddagger}\|_X
	\leq \frac{|\mathbf{v}|_{\sup}}{\kappa_0|\mathbf{u}|_{\inf}}\|y^\delta-y\|_Y\leq \frac{|\mathbf{v}|_{\sup}\delta}{\kappa_0|\mathbf{u}|_{\inf}}.  $$
	This completes the proof of Theorem \ref{well-posed}.
	\subsection{Proof of Theorem \ref{convergence}  }
	The proof of Theorem \ref{convergence} could be seen in \cite{Ebner} or in the proof of Theorem 
	\ref{Theo:prior} below.
	
	\subsection{Proof of Theorem \ref{Theo:lower-bound}.}
	\begin{proof}
		One of the commonly used methods to find a lower bound for the worst-case error  is to compute the modulus of continuity  
		$$\Omega\left(\mathbf{M}, \delta \right) = \sup \{\|x\| \mid x \in \mathbf{M} \wedge\|\mathbf{K} x\| \leq \delta\} .$$
		As is known (see, e.g. \cite{Tautenhahn}), we have
		$$ \Delta(\mathbf{M}_{\varphi, E},\delta,\mathbf{R})\geq \Omega\left(\mathbf{M}_{\varphi, E}, \delta \right).  $$ 
		For any 
		$\nu \in \Lambda$ set $x_\nu:= E \sqrt{\varphi \left(\kappa^2_\nu \right)} \overline{u}_\nu $ such that
		$$
		\left\langle x_\nu, u_\lambda\right\rangle= \sqrt{\varphi \left(\kappa^2_\nu \right)} \omega_\lambda, \quad \omega_\lambda= 
		\begin{cases}
			E, & \text { if } \lambda=\nu, \\ 0, & \text { else. }
		\end{cases}
		$$
		By definition we have $\|\omega\|_2=E$ and $x_\nu \in \mathbf{M}_{\varphi, E}$. Choosing $\nu\in D_{\lambda,\beta}$, we can write  $\delta^*_\nu=|\mathbf{v}|_{\inf}^{-1}E  \sqrt{\kappa^2_\nu \varphi \left(\kappa^2_\nu \right)}\leq \delta $ and obtain
		\begin{equation}
			\left\|x_\nu\right\|^2 \geq \frac{1}{B_u} \sum_{\lambda \in \Lambda}\left|\left\langle u_\lambda, x_\nu\right\rangle_X\right|^2=\frac{1}{B_u}  \varphi \left(\kappa^2_\nu \right) E^2.
			\label{ieq-x-modulus}
		\end{equation}
		Moreover, combine   the Asumption A1 (ii) and the choosing of $\delta_\nu$, then $\Theta\left(\varphi\left(\kappa^2_\nu \right)\right) = \kappa^2_\nu \varphi \left(\kappa^2_\nu \right)=
		|\mathbf{v}|_{\inf}^2 \delta_\nu^{*2}/E^2$. From here, it leads us to 
		$\varphi(\kappa_\nu^2)=
		\Theta^{-1}(|\mathbf{v}|_{\inf}^2
		\delta_\nu^{*2}/E^2). $
		After that, taking it to \eqref{ieq-x-modulus}, we obtain 
		\begin{equation}
			\left\|x_\nu\right\| \geq \sqrt{B_u^{-1}} E \sqrt{\Theta^{-1}\left( |\mathbf{v}|_{\inf}^2\delta_\nu^{*2} / E^2  \right)}. 
			\label{ieq-x-modulus1}
		\end{equation}
		On the other hand, we also have 
		$$
		\left\|\mathbf{K} x_\nu\right\|^2 \leq \frac{1}{|\mathbf{v}|_{\inf}^2} \sum_{\lambda \in \Lambda}\left|\left\langle \mathbf{K} x_\nu , v_\lambda\right\rangle\right|^2=
		\frac{1}{|\mathbf{v}|_{\inf}^2} \sum_{\lambda \in \Lambda} \kappa^2_\lambda\left|\left\langle x_\nu, u_\lambda \right\rangle\right|^2=\frac{1}{|\mathbf{v}|_{\inf}^2} \kappa^2_\nu \varphi\left( \kappa^2_\nu \right) E^2=\delta_\nu^{*2}\leq \delta^2 .
		$$
		Hence $\left\|\mathbf{K} x_\nu\right\| \leq \delta$ and $x_\nu \in \mathbf{M}_{\varphi, E}$. From the definition of $\Omega \left(\mathbf{M}_{\varphi, E}, \delta_\nu \right)$ and (\ref{ieq-x-modulus1}), we deduce that 
		$$  \Omega \left(\mathbf{M}_{\varphi, E}, \delta \right) \geq \sqrt{B_u^{-1}} E \sqrt{\Theta^{-1}\left(|\mathbf{v}|_{\inf}^2 \delta^{*2}_\nu / E^2  \right)}. $$
		Using Lemma \ref{Theta} gives 
		$$\Theta^{-1}\left(|\mathbf{v}|_{\inf}^2 \delta_\nu^{*2} / E^2  \right)=
		\Theta^{-1}\left(\beta^2|\mathbf{v}|_{\inf}^2 \beta^{-2}\delta_\lambda^{*2} / E^2  \right) 
		\geq \Theta^{-1}\left(\beta^2|\mathbf{v}|_{\inf}^2\delta^2 / E^2  \right)
		\geq \beta \Theta^{-1}\left(|\mathbf{v}|_{\inf}^2\delta^2 / E^2  \right).$$
	\end{proof}

	\subsection{Proof of  Theorem \ref{Theo:prior}.}
	
	\begin{proof}
		The regularization solution for the equation (\ref{eq-operator-org}) for the noiseless  has  the form 
		\begin{equation}
			\label{eq-solution-regular-dfd}
			x^\alpha: = R_\alpha y = \sum_{\lambda \in \Lambda} \kappa_\lambda g_\alpha \left(\kappa_\lambda^2 \right) \langle y, v_\lambda \rangle \overline{u}_\lambda,
		\end{equation}
		From the triangle inequality, it gives us
		\begin{equation}
			\label{ieq-triangle-org}
			\Vert x^\delta_\alpha -  x^{\ddagger} \Vert_X  \leq \Vert x^\delta_\alpha -  x_\alpha \Vert_X + \Vert x_\alpha -  x^{\ddagger} \Vert_X. 
		\end{equation}
		For the first term on the right hand side, using (\ref{eq-solution-regular-dfd}) and  (\ref{eq-solution-regular-noise-dfd}) gives 
		\begin{align}
			\Vert x^\delta_\alpha -  x_\alpha \Vert_X  & =  \left\| \sum_{\lambda \in \Lambda} \kappa_\lambda g_\alpha \left(\kappa_\lambda^2 \right) \langle y^\delta-y, v_\lambda \rangle_Y \overline{u}_\lambda \right\|_X. \nonumber
		\end{align}
		From \eqref{AuBu} and \eqref{Bw-general}, the inequality \eqref{w-dual-upper-inequality} yields  
		\begin{align}
			\Vert x^\delta_\alpha -  x_\alpha \Vert_X 
			& \leq  \dfrac{1}{\sqrt{A_u}} \left( \sum_{\lambda \in \Lambda} \sup_\lambda \left(\kappa^2_\lambda g^2_\alpha \left(\kappa_\lambda^2 \right)\right) \left| \langle y^\delta-y, v_\lambda \rangle_Y \right|^2 \right)^\frac{1}{2} \nonumber \\
			& \leq   \dfrac{1}{\sqrt{A_u}} \dfrac{\gamma_1}{\sqrt{\alpha}} \left( \sum_{\lambda \in \Lambda}  \left| \langle y^\delta-y, v_\lambda \rangle_Y \right|^2 \right)^\frac{1}{2} \nonumber \\
			& \leq  \sqrt{\frac{B_v}{A_u}} \dfrac{\gamma_1}{\sqrt{\alpha}}  \Vert y^\delta - y \Vert_Y  
			\leq  \gamma_1 \sqrt{B_v A_u^{-1}} \dfrac{\delta}{\sqrt{\alpha}}.
			\label{x-del-alp-x-alp}
		\end{align}
		The second line is a result of utilizing Assumption A2 (i). The penultimate line is obtained through the upper bound of the frame $\left( v_\lambda \right)_{\lambda \in \Lambda}$. Finally, the last line is due to the noise condition (\ref{ieq-noise-org}).\\		
		For the last term in (\ref{ieq-triangle-org}), using (\ref{eq-solution-dfd}) and  (\ref{eq-solution-regular-dfd}), it follows that 
		\begin{align}
			\label{x-alp-x-dagger}
			\Vert x_\alpha - x^\ddagger \Vert_X  & =  \left\| \sum_{\lambda \in \Lambda} \left( \kappa^2_\lambda g_\alpha \left(\kappa_\lambda^2 \right) -1 \right)  \langle x^\ddagger, u_\lambda \rangle_X \overline{u}_\lambda \right\|_X \nonumber \\ 
			& \leq  \dfrac{1}{\sqrt{A_u}} \left( \sum_{\lambda \in \Lambda} \left|1- \kappa^2_\lambda g_\alpha \left(\kappa_\lambda^2 \right)\right|^2 \left| \langle x^\ddagger, u_\lambda \rangle_X \right|^2 \right)^\frac{1}{2} \nonumber \\
			& \leq  \dfrac{1}{\sqrt{A_u}} \left( \sum_{\lambda \in \Lambda} \left|1- \kappa^2_\lambda g_\alpha \left(\kappa_\lambda^2 \right)\right|^2 \left| \sqrt{\varphi\left( \kappa^2_\lambda \right) }\omega_\lambda \right|^2 \right)^\frac{1}{2} \nonumber \\
			& \leq  \dfrac{\gamma_2}{\sqrt{A_u}} \sqrt{\varphi\left(\alpha \right)} \left( \sum_{\lambda \in \Lambda}  \left| \omega_\lambda \right|^2 \right)^\frac{1}{2}  \leq  \dfrac{\gamma_2}{\sqrt{A_u}} \sqrt{\varphi\left(\alpha \right)} E.
		\end{align}
		The second and the last lines are derived from the exact solution belonging to the source set $ x^\ddagger \in \mathbf{M}_{\varphi, E} $. As for the third line, it is obtained from Assumption A2 (ii).\\	
		Combining (\ref{x-del-alp-x-alp}) and (\ref{x-alp-x-dagger}) yields 
		\begin{align*}
			\Vert x_\alpha^\delta - x^\ddagger \Vert_X \leq \gamma_1 \sqrt{B_v A_u^{-1}} \dfrac{\delta}{\sqrt{\alpha}} +  \dfrac{\gamma_2}{\sqrt{A_u}} \sqrt{\varphi\left(\alpha \right)} E.
		\end{align*}
		From the choice of the parameter (\ref{eq-parameter-choose-prior}), the regularized parameter is chosen by $\alpha=\alpha^*\left(\delta, y^\delta\right)=\varphi^{-1}\circ \Theta^{-1}\left(A_v\delta^2 / E^2\right)$ it leads us to  $ \varphi \left(\alpha \right) = \Theta^{-1}\left(A_v\delta^2 / E^2\right)$. From here, it is easy to see that  $\Theta \left( \varphi \left(\alpha \right) \right) =A_v \delta^2/E^2$ and it leads us to  $\delta^2 = A_v^{-1}E^2 \Theta \left( \varphi \left(\alpha \right) \right)$. Combine with Assumption A1 (iii), we obtain that
		\begin{equation*}
			\label{eq-choose-alp-prior-general-1}
			\dfrac{\delta}{\sqrt{\alpha}} = \sqrt{\dfrac{\delta^2}{\alpha}}=\sqrt{ \dfrac{A_v^{-1}E^2 \Theta \left( \varphi \left(\alpha \right) \right)}{\alpha}} = \sqrt{A_v^{-1}E^2 \varphi \left(\alpha \right)} = E \sqrt{A_v^{-1}\Theta^{-1}\left(A_v\delta^2 / E^2\right)}.
		\end{equation*}
		Hence, we get that
		\begin{align}
			\label{ieq-x-alp-del-x-dagger}
			\Vert x_\alpha^\delta - x^\ddagger \Vert_X & \leq   \gamma_1\sqrt{A_u^{-1}}\sqrt{A_v^{-1}} \sqrt{B_v } E \sqrt{\Theta^{-1}\left(A_v\delta^2 / E^2\right)} + \dfrac{\gamma_2}{\sqrt{A_u}} E \sqrt{\Theta^{-1}\left(A_v\delta^2 / E^2\right)} \nonumber\\
			& = \sqrt{A_u^{-1}}\sqrt{A_v^{-1}} \left(\gamma_1 \sqrt{B_v} +  \gamma_2\sqrt{A_v}  \right)E \sqrt{\Theta^{-1}\left(A_v\delta^2 / E^2\right)}\nonumber\\
			& = \sqrt{A_u^{-1}}\sqrt{A_v^{-1}} \left(\gamma_1 \sqrt{B_v} +  \gamma_2\sqrt{A_v}  \right)E \sqrt{\Theta^{-1}\left(|\mathbf{v}|_{\inf}^2\delta^2 / E^2\right)} 
			.\nonumber
		\end{align}
		The above estimate completes the proof.
	\end{proof}
	\subsection{Proof of Theorem \ref{posteriori-equation}}
	
	Using Lemma \ref{u-to-v}, we obtain 
	$$ d(\alpha)=\left(\sum_\lambda (1-\kappa_\lambda^2g_\alpha(\kappa_\lambda^2)
	)^2|\langle y^\delta,v_\lambda\rangle_Y|^2\right)^{\frac{1}{2}}. $$
	We know that under the conditions (i) - (ii) of Assumption B1, the function $d$ is continuous and has the following results:
	\begin{equation}
		\label{eq-limit-d}
		\lim _{\alpha \rightarrow 0} d(\alpha)=0 \text { and } \quad \lim _{\alpha \rightarrow \infty} d(\alpha)= \rho\left( \sum_{\lambda} \left| \langle y^\delta, v_\lambda \rangle \right|^2 \right)^\frac{1}{2}.
	\end{equation}
	We have
	\begin{equation}
		\label{ieq-condition-C-delta}
		0 <\tau\sqrt{B_v} \delta < \rho\sqrt{A_v} \Vert P_{\overline{\text{ran} \mathbf{K} }} y^\delta \Vert\leq \rho\left( \sum_{\lambda} \left| \langle y^\delta, v_\lambda \rangle \right|^2 \right)^\frac{1}{2} .
	\end{equation}
	Therefore, under Assumption B1 and \eqref{eq-limit-d}, the equation (\ref{eq-choosing-alp-posterior}) has a solution $\alpha=\alpha_D(\delta).$

	\subsection{Proof of Theorem \ref{theo-posterior-general}}
	First, with Assumption B2, using the ideas in \cite{Vainik}, page 77, we obtain the following inequality
	\begin{Lemm}
		For $x\in X$, we have
		\begin{align}
			\|x^\delta_\alpha-x^\ddagger\|_u^2+\ell_\alpha (\|\mathbf{K}x_\alpha^\delta-y^\delta\|_v^2-
			\|\mathbf{K}x^\ddagger-y^\delta\|_v^2)\leq 
			\langle \left[I-\mathbf{K}^* \mathbf{K} g_\alpha \left(\mathbf{K}^*\mathbf{K}\right)\right] x^{\ddagger}, x^{\ddagger}\rangle_u.
			\label{ieq-support-posterior}	
		\end{align}
		\label{lemm:support-posterior}
	\end{Lemm}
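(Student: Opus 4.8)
The plan is to transcribe the classical argument of Vainikko–Veretennikov (\cite{Vainik}, page 77) into the DFD language, using Lemma \ref{u-to-v} to convert all spectral expressions on the $Y$-side into sums over $\Lambda$ weighted by the quasi-singular values $\kappa_\lambda$. First I would expand every norm appearing in \eqref{ieq-support-posterior} in the frame $\mathbf{u}$: writing $x^\ddagger_\lambda:=\langle x^\ddagger,u_\lambda\rangle_X$ and recalling from Lemma \ref{u-to-v} that $\langle x^\delta_\alpha,u_\lambda\rangle_X=\kappa_\lambda g_\alpha(\kappa_\lambda^2)\langle y^\delta,v_\lambda\rangle_Y$, so that
$$
\|x^\delta_\alpha-x^\ddagger\|_u^2=\sum_{\lambda\in\Lambda}\bigl|\kappa_\lambda g_\alpha(\kappa_\lambda^2)\langle y^\delta,v_\lambda\rangle_Y-x^\ddagger_\lambda\bigr|^2,
$$
and similarly $\|\mathbf{K}x_\alpha^\delta-y^\delta\|_v^2=\sum_\lambda|\kappa_\lambda^2 g_\alpha(\kappa_\lambda^2)\langle y^\delta,v_\lambda\rangle_Y-\langle y^\delta,v_\lambda\rangle_Y|^2$ and $\|\mathbf{K}x^\ddagger-y^\delta\|_v^2=\sum_\lambda|\langle y^\delta,v_\lambda\rangle_Y-\langle y^\delta,v_\lambda\rangle_Y+(\text{using }\langle\mathbf{K}x^\ddagger,v_\lambda\rangle_Y=\langle y,v_\lambda\rangle_Y)|^2$; more precisely $\langle \mathbf{K}x^\ddagger-y^\delta,v_\lambda\rangle_Y=\kappa_\lambda x^\ddagger_\lambda-\langle y^\delta,v_\lambda\rangle_Y$. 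Likewise the right-hand side becomes $\sum_\lambda(1-\kappa_\lambda^2 g_\alpha(\kappa_\lambda^2))|x^\ddagger_\lambda|^2$. Thus the whole inequality reduces to a termwise (in $\lambda$) numerical inequality once we abbreviate $g:=g_\alpha(\kappa_\lambda^2)$, $\kappa:=\kappa_\lambda$, $b:=\langle y^\delta,v_\lambda\rangle_Y$, $a:=x^\ddagger_\lambda$.

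The core is then the elementary scalar estimate: for each $\lambda$,
$$
|\kappa g\,b-a|^2+\ell_\alpha\bigl(|(\kappa^2 g-1)b|^2-|\kappa a-b|^2\bigr)\le (1-\kappa^2 g)|a|^2,
$$
which I would verify by expanding both sides and using the two structural facts from Assumption B2: that $g\ge 0$ and $0\le 1-\kappa^2 g\le g/\ell_\alpha$, equivalently $\ell_\alpha(1-\kappa^2 g)\le g$ (note $\ell_\alpha=\sup_\mu g_\alpha(\mu)\ge g_\alpha(\kappa^2)=g$ is used to control the cross terms, exactly as in Vainikko). Summing over $\lambda\in\Lambda$ recovers \eqref{ieq-support-posterior}; convergence of all the sums is guaranteed because $x^\ddagger\in{\rm dom}\,\mathbf{K}^\ddagger$ gives $\sum_\lambda|x^\ddagger_\lambda|^2<\infty$ (as $\mathbf{u}$ is a frame) and Assumption C2 bounds $\kappa^2 g$ uniformly, while $\sum_\lambda|b|^2\le B_v\|P_{\overline{\mathrm{ran}\,\mathbf{K}}}y^\delta\|_Y^2<\infty$ by \eqref{Bw-general}.

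I would organize the write-up as: (1) invoke Lemma \ref{u-to-v} to rewrite each of the four squared-norm terms as explicit $\ell^2(\Lambda)$ sums; (2) isolate the summand and prove the scalar inequality using Assumption B2(i)–(ii); (3) sum over $\Lambda$, noting all series converge, to conclude. The main obstacle I anticipate is bookkeeping in step (2): the cross terms between $b$ and $a$ must cancel or be absorbed, and this is precisely where the constraint $\ell_\alpha(1-\kappa^2 g)\le g$ together with $1-\kappa^2 g\ge 0$ is essential — getting the signs and the Cauchy–Schwarz (or completion-of-square) bookkeeping right for the mixed $ab$-terms is the delicate part, but it is a finite-dimensional (indeed one-dimensional, per index) computation that mirrors the classical argument line for line.
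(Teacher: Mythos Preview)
Your proposal is correct and follows essentially the same route as the paper: both invoke Lemma~\ref{u-to-v} to rewrite all four squared norms as $\ell^2(\Lambda)$ sums, reduce \eqref{ieq-support-posterior} to a termwise scalar inequality in the coefficients, verify that scalar inequality using Assumption~B2 (namely $g\le\ell_\alpha$ and $\ell_\alpha r\le g$ with $r:=1-\kappa^2 g\ge 0$), and sum over~$\Lambda$. The only cosmetic difference is that the paper works in the variables $\langle y^\delta-y,v_\lambda\rangle$ and $\langle y,v_\lambda\rangle$ and first derives the exact identity $|\kappa g b-a|^2+gr|b|^2=g|b-\kappa a|^2+r|a|^2$, so that the B2 inequalities apply in one line, whereas your direct expansion in $(a,b)$ requires slightly more algebra to close; either way the computation is the same.
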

	\textbf{Proof of Lemma \ref{lemm:support-posterior}.}
	\begin{proof}
		We claim that
		\begin{align}
			|\langle x^\delta_\alpha-x^\ddagger,u_\lambda \rangle_X|^2
			+\ell_\alpha |\langle \mathbf{K}x_\alpha^\delta-y^\delta,v_\lambda \rangle_Y|^2-
			\ell_\alpha |\langle \mathbf{K}x^\ddagger-y^\delta,v_\lambda \rangle_Y|^2
			\leq (1-\kappa_\lambda^2g_\alpha(\kappa_\lambda^2))|
			\langle x^\ddagger,u_\lambda\rangle_X|^2.
			\label{ieq-scalar-posterior}
		\end{align}
		Using \eqref{u-to-v}, we can write
		\begin{align*}
			&|\langle x^\delta_\alpha-x^\ddagger,u_\lambda \rangle_X|^2
			=\frac{1}{\kappa_\lambda^2}\left|
			\kappa_\lambda^2 
			g_\alpha(\kappa_\lambda^2)\langle y^\delta-y,v_\lambda\rangle_Y-
			r_\alpha(\kappa_\lambda^2)
			\langle y,v_\lambda\rangle_Y   \right|^2\\
			&= \kappa_\lambda^2 
			g_\alpha^2(\kappa_\lambda^2)|\langle y^\delta-y,v_\lambda\rangle_Y|^2-2g_\alpha(\kappa_\lambda^2)
			r_\alpha(\kappa_\lambda^2)\langle y^\delta-y,v_\lambda\rangle_Y
			\langle y,v_\lambda\rangle_Y+\frac{r_\alpha^2(\kappa_\lambda^2)}{\kappa_\lambda^2}|\langle y,v_\lambda\rangle_Y|^2.			
		\end{align*}
		On the other hand, we have
		\begin{align*}
			&	g_\alpha(\kappa_\lambda^2)r_\alpha(\kappa_\lambda^2)
			|\langle y^\delta,v_\lambda\rangle_Y|^2=
			g_\alpha(\kappa_\lambda^2)r_\alpha(\kappa_\lambda^2)
			|\langle y^\delta-y,v_\lambda\rangle_Y+\langle y,v_\lambda\rangle_Y|^2\\
			&=g_\alpha(\kappa_\lambda^2)r_\alpha(\kappa_\lambda^2)
			(|\langle y^\delta-y,v_\lambda\rangle_Y|^2+
			2\langle y^\delta-y,v_\lambda\rangle_Y\langle y,v_\lambda\rangle_Y
			+|\langle y,v_\lambda\rangle_Y|^2 ).
		\end{align*}	 
		Adding two inequalities yield
		\begin{align*}
			&|\langle x^\delta_\alpha-x^\ddagger,u_\lambda \rangle_X|^2
			+	g_\alpha(\kappa_\lambda^2)r_\alpha(\kappa_\lambda^2)
			|\langle y^\delta,v_\lambda\rangle_Y|^2
			=g_\alpha(\kappa_\lambda^2)
			|\langle y^\delta-y,v_\lambda\rangle_Y|^2+
			\frac{r_\alpha(\kappa_\lambda^2)}{\kappa_\lambda^2}|\langle y,v_\lambda\rangle_Y|^2.	
		\end{align*}
		Since $\ell_\alpha\geq g_\alpha(\mu)\geq \ell_\alpha r_\alpha(\mu)$ we obtain
		\begin{align*}
			g_\alpha(\kappa_\lambda^2)r_\alpha(\kappa_\lambda^2)
			|\langle y^\delta,v_\lambda\rangle_Y|^2 &\geq \ell_\alpha 
			|r_\alpha(\kappa_\lambda^2)\langle y^\delta,v_\lambda\rangle_Y|^2=\ell_\alpha|\langle \mathbf{K}x^\delta_\alpha-y^\delta,v_\lambda \rangle|^2,\\
			g_\alpha(\kappa_\lambda^2)
			|\langle y^\delta-y,v_\lambda\rangle_Y|^2 &\leq \ell_\alpha
			|\langle y^\delta-y,v_\lambda\rangle_Y|^2=\ell_\alpha
			\langle \mathbf{K}x^\ddagger-y^\delta,v_\lambda \rangle|^2.
		\end{align*}
		Hence the inequality \eqref{ieq-scalar-posterior} holds. 
		Taking the sum of the latter inequlities with respect to 
		$\lambda\in\Lambda$, we get \eqref{ieq-support-posterior}.
	\end{proof}
	\textbf{Proof of Theorem \ref{theo-posterior-general}.}
	
	\begin{proof}
		Firstly, for every $(\alpha, \lambda)$, recalling the definition of $g_\alpha$, we reiterate that $r_\alpha \left(\lambda\right)$ is defined as follows:
		$$
		r_\alpha(\lambda)=1-\lambda g_\alpha(\lambda) .
		$$
		Let $\alpha=\alpha_D$ be the regularization parameter chosen by (\ref{eq-choosing-alp-posterior}). 
		Let $\alpha=\alpha_D$ be the regularization parameter chosen by (\ref{eq-choosing-alp-posterior}). Using \eqref{Bw-general} and (\ref{ieq-noise-org}) yields 
		$$\Vert \mathbf{K} x_\alpha^{\delta}-y^\delta\Vert_v \geq \tau\sqrt{B_v}\delta>\tau\sqrt{B_v}  \Vert \mathbf{K} x^{\ddagger}-y^\delta \Vert_Y\geq \Vert \mathbf{K} x^{\ddagger}-y^\delta \Vert_v . $$
		We recall that 
		$\Vert \mathbf{K} x_\alpha^{\delta}-y^\delta\Vert_v \geq \Vert \mathbf{K} x^{\ddagger}-y^\delta \Vert_Y\geq \Vert \mathbf{K} x^{\ddagger}-y^\delta \Vert_v . $
		It follows that 
		\begin{equation}
			\Vert x_\alpha^\delta-x^{\ddagger}\Vert_u \leq \langle \left[I-\mathbf{K}^* \mathbf{K} g_\alpha \left(\mathbf{K}^* \mathbf{K}\right)\right] x^{\ddagger}, x^{\ddagger}\rangle^{\frac{1}{2}}_u = \Vert \left[r_\alpha\left(\mathbf{K}^* \mathbf{K}\right)\right]^{\frac{1}{2}} x^{\ddagger}\Vert_u.
			\label{ieq-norm-inner-produc}
		\end{equation}
		Applying Lemma \ref{Theta} for   $t=$ $r_\alpha(\lambda):=1-\lambda g_\alpha(\lambda)$  give 
		\begin{equation}
			\label{ieq-varrho-r}
			\Theta \left(r_\alpha(\lambda) \varphi(\lambda)\right) \leq \lambda r_\alpha^2(\lambda) \varphi(\lambda) .
		\end{equation}
		For  $x_\alpha$ be a approximate solution as in  (\ref{eq-solution-regular-dfd}) in the case  $y^\delta$ which is replaced by exact data  $y$. 
		
		Using Lemma \ref{u-to-v} yields
		\begin{align*}
			\Vert \mathbf{K} x_\alpha-\mathbf{K} x^{\ddagger} \Vert_v & = 
			\left(\sum_{\lambda\in\Lambda}|\langle \mathbf{K}x_\alpha-\mathbf{K} x^{\ddagger},v_\lambda\rangle_Y 
			|^2\right)^{1/2}= \left(\sum_{\lambda\in\Lambda}
			r_\alpha(\kappa_\lambda^2)|\langle y,v_\lambda\rangle |^2\right)^{1/2}\\
			&\leq \left(\sum_{\lambda\in\Lambda}
			r_\alpha(\kappa_\lambda^2)|\langle y^\delta,v_\lambda\rangle |^2\right)^{1/2}+
			\left(\sum_{\lambda\in\Lambda}
			r_\alpha(\kappa_\lambda^2)|\langle y-y^\delta,v_\lambda\rangle |^2\right)^{1/2}\\
			&\leq  
			\left(\sum_{\lambda\in\Lambda}|\langle \mathbf{K}x_\alpha^\delta-y^\delta,v_\lambda\rangle_Y 
			|^2\right)^{1/2}+
			\left(\sum_{\lambda\in\Lambda}
			|\langle y-y^\delta,v_\lambda\rangle |^2\right)^{1/2}. 
		\end{align*}
		Because  $\alpha=\alpha_D$ be a solution of the equation (\ref{eq-choosing-alp-posterior}), we deduce that
		\begin{equation}
			\Vert \mathbf{K} x_\alpha-\mathbf{K} x^{\ddagger} \Vert_v
			\leq \sqrt{ B_v}\left(\tau+1\right)\delta.
			\label{Kv} 
		\end{equation}

		We denote $\omega=(\omega_\lambda)_{\lambda\in\Lambda}$ with $\omega_\lambda=[\varphi(\kappa_\lambda^2)]^{-1/2}\langle x^\ddagger,u_\lambda\rangle$. Using  the definition of $\Theta$ function, a bound of frame $\mathbf{u}$ and the definition of $\mathbf{M}_{\varphi, E}$, we get 
		\begin{align}
			\Theta \left( \dfrac{\left\| \left[ r_\alpha\left(\mathbf{K}^* \mathbf{K}\right)\right]^{\frac{1}{2}} x^{\ddagger} \right\|_u^2}{\|\omega\|_2^2}\right) 
			= \Theta \left( \dfrac{\sum_{\lambda \in \Lambda} \left|\langle \left[ r_\alpha\left(\kappa^2_\lambda \right)\right]^{\frac{1}{2}} x^{\ddagger}, u_\lambda \rangle \right|^2}{\|\omega\|_2^2} \right) =  \Theta \left( \dfrac{\sum_{\lambda \in \Lambda} r_\alpha\left(\kappa^2_\lambda \right) \varphi\left( \kappa^2_\lambda \right)\left| \omega_\lambda \right|^2}{\|\omega\|_2^2} \right). 
			\nonumber
		\end{align} 
		Next, by the convexity of  $\Theta$, the Jensen inequality, and the inequality  (\ref{ieq-varrho-r}) there holds that 
		\begin{align*}
			\Theta \left( \dfrac{\sum_{\lambda \in \Lambda} r_\alpha\left(\kappa^2_\lambda \right) \varphi\left( \kappa^2_\lambda \right)\left| \omega_\lambda \right|^2}{\|\omega\|_2^2} \right) 
			& \leq   \dfrac{\sum_{\lambda \in \Lambda} \Theta \left( r_\alpha\left(\kappa^2_\lambda \right) \varphi\left( \kappa^2_\lambda \right)\right) \left| \omega_\lambda \right|^2}{\|\omega\|_2^2}  \nonumber \\
			& \leq   \dfrac{\sum_{\lambda \in \Lambda} \kappa^2_\lambda  r^2_\alpha\left(\kappa^2_\lambda \right) \varphi\left( \kappa^2_\lambda \right) \left| \omega_\lambda \right|^2}{\|\omega\|_2^2}  \nonumber \\
			& \leq   \dfrac{\sum_{\lambda \in \Lambda} \kappa^2_\lambda  r^2_\alpha\left(\kappa^2_\lambda \right) \varphi\left( \kappa^2_\lambda \right) \left| \omega_\lambda \right|^2}{\|\omega\|_2^2} . \nonumber \\
		\end{align*} 
		Combining the results here with inequality (\ref{Kv}) and bound of frame $\mathbf{u}$, we can infer that.
		\begin{align}
			\label{ieq-varrho-del-rho3}
			\Theta \left( \dfrac{\left\| \left[ r_\alpha\left(\mathbf{K}^* \mathbf{K}\right)\right]^{\frac{1}{2}} x^{\ddagger} \right\|_u^2}{\|\omega\|_2^2}\right) 
			& \leq  \dfrac{\sum_{\lambda \in \Lambda}   \left|\langle  \kappa_\lambda  r_\alpha\left(\kappa^2_\lambda \right) x^{\ddagger}, u_\lambda \rangle \right|^2}{\|\omega\|^2}    =  \frac{\left\|\mathbf{K} x_\alpha-\mathbf{K} x^{\ddagger}\right\|_v^2}{\|\omega\|_2^2} 
			\leq  \dfrac{B_v(\tau+1)^2 \delta^2}{\|\omega\|_2^2} .
		\end{align}
		
		From the inequality $B_v\geq 1$ and the definition of the source-set $\mathbf{M}_{\varphi, E}$, we deduce that  $\sqrt{B_vA_v^{-1}}(\tau+1) E \geq \Vert \omega \Vert_u$. Using the monotonicity of  $\varphi^{-1}$, a relation  $\varphi^{-1}(\lambda)=\frac{1}{\lambda} \Theta(\lambda)$ and the estimate  (\ref{ieq-varrho-del-rho3}), we obtain
		$$
		\begin{aligned}
			&\varphi^{-1}\left(  \dfrac{\left.\| r_\alpha\left(\mathbf{K}^* \mathbf{K}\right)\right]^{\frac{1}{2}} x^{\ddagger} \|_u^2}{A_v^{-1}B_v (\tau+1)^2 E^2}\right)  \leq \varphi^{-1}\left( \dfrac{\left\|\left[r_\alpha\left(\mathbf{K}^* \mathbf{K}\right)\right]^{\frac{1}{2}} x^{\ddagger}\right\|^2}{\|\omega\|_2^2}\right) \\
			& = \frac{\|\omega\|_2^2}{\left\|\left[r_\alpha\left(\mathbf{K}^* \mathbf{K}\right)\right]^{1 / 2} x^{\ddagger}\right\|_2^2} 
			\Theta \left(  \frac{\left\|\left[r_\alpha\left(\mathbf{K}^* \mathbf{K}\right)\right]^{\frac{1}{2}} x^{\ddagger}\right\|_u^2}{\|\omega\|_2^2}\right) \\
			& = \dfrac{\|\omega\|_2^2}{\left\|\left[r_\alpha\left(\mathbf{K}^* \mathbf{K}\right)\right]^{1 / 2} x^{\ddagger}\right\|_u^2} \dfrac{B_v(\tau+1)^2 \delta^2}{\|\omega\|_2^2}  
			=\dfrac{B_v( \tau+1)^2 \delta^2}{\left\|\left[r_\alpha\left(\mathbf{K}^* \mathbf{K}\right)\right]^{1 / 2} x^{\ddagger}\right\|_u^2}.
		\end{aligned}
		$$
		Equivalently,
		$$
		\Theta \left(\dfrac{\left\|\left[r_\alpha\left(\mathbf{K}^* \mathbf{K}\right)\right]^{\frac{1}{2}} x^{\ddagger}\right\|_u^2}{A_v^{-1}B_v(\tau+1)^2 E^2}\right) \leq \frac{A_v\delta^2}{E^2} .
		$$
		From here, it follows
		$$
		\left\|\left[r_\alpha\left(\mathbf{K}^* \mathbf{K}\right)\right]^{\frac{1}{2}} x^{\ddagger}\right\|_u^2  \leq A_v^{-1}B_v (\tau+1)^2 E^2  \Theta^{-1} \left(\frac{A_v\delta^2}{E^2}\right).
		$$
		This estimate and  (\ref{ieq-norm-inner-produc}) give us the result  (\ref{ieq-x-alp-del-x-poster-general}). That is 
		\begin{align*}
			\label{ieq-x-del-al-x-dageer}
			\Vert x^\delta_\alpha - x^\ddagger \Vert_X 
			&\leq \sqrt{A^{-1}_u B_u} (\tau+1) E \sqrt{ \Theta^{-1} \left(A_v\delta^2/E^2\right)} \leq \sqrt{A^{-1}_u B_u} (\tau+1) E \sqrt{ \Theta^{-1} \left(|\mathbf{v}|_{\inf}^2\delta^2/E^2\right)}.
		\end{align*}
	\end{proof}
	\subsection{Proof of Theorem \ref{Theo-WVD-K}}
	\begin{proof}
		(a)    Firstly, \textbf{D1} ensures this because, as we know, $(u_\lambda)_{\lambda\in\Lambda}$ forms an orthonormal basis for 
		$L^2( \mathbb{R}^2)$. Similarly, \textbf{D3} is also considered established through the identification of the relationship between $\{v_{\lambda}\}$ and $\{u_{\lambda}\}$ as in (\ref{eq-v-lambda}). Therefore, we only need to prove \textbf{D2}. Taking any $\theta_T$ belonging to the range of $K$, it is easily seen that for each $\lambda=(\lambda_D,\lambda_T) \in \Lambda $ then 
		$$supp \left(\mathcal{F} u_\lambda\right) \subset \left\{ \omega \in \mathbb{R}: 2^{\lambda_D} a_u \leq \left|\omega \right| \leq 2^{\lambda_D} b_u \right\}.$$
		From here, we deduce that $ 2^{2\lambda_D} a^2_u T^\gamma \leq \left|\omega \right|^2 T^\gamma \leq 2^{2\lambda_D}b^2_u T^\gamma$ for every $\omega\in supp \left(\mathcal{F} u_\lambda\right)$. Using the monotonicity property of the function $E_{\gamma, 1}(z)$, we obtain 
		$$ E_{\gamma, 1} \left(-2^{2\lambda_D} b^2_u T^\gamma\right) \leq E_{\gamma, 1} \left(- \left|\omega \right|^2 T^\gamma \right) \leq E_{\gamma, 1} \left( -2^{2\lambda_D}a^2_u T^\gamma\right). $$
		This follows that
		$$\kappa_\lambda E^{-1}_{\gamma, 1} \left(-2^{2\lambda_D} a^2_u T^\gamma\right) \leq \left| \kappa_\lambda E^{-1}_{\gamma, 1} \left(- \left|\omega \right|^2 T^\gamma \right) \right| \leq \kappa_\lambda E^{-1}_{\gamma, 1} \left( -2^{2\lambda_D}b^2_u T^\gamma\right). $$
		Moreover, from the inequality \eqref{bound-ML},
		and $\kappa_\lambda =2^{-2\lambda_D}$, we can deduce
		\begin{equation}
			\label{ieq-frame-v}
			(1+a^2_u T^\gamma)/ \overline{c} \leq \left| \kappa_j E^{-1}_{\gamma, 1} \left(- \left|\omega \right|^2 T^\gamma \right) \right| \leq (1+b^2_u T^\gamma) / \underline{c}.
		\end{equation}
		We recall that $ u_j $ is a frame, this means that for all $\theta \in L^2 \left(\mathbb{R}\right)$, 
		$$A_u \Vert \theta \Vert^2_{L^2 \left(\mathbb{R}^2\right)} \leq \sum_{\lambda \in \Lambda} \left| \langle \theta, u_\lambda \rangle \right|^2 \leq B_u \Vert \theta \Vert^2_{L^2 \left(\mathbb{R}^2\right)} .$$
		Equivalently, 
		\begin{equation}
			A_u \Vert\mathcal{F} \theta \Vert^2_{L^2 \left(\mathbb{R}^2\right)} \leq \sum_{\lambda \in \Lambda} \left| \langle \mathcal{F}\theta, \mathcal{F}u_\lambda \rangle \right|^2 \leq B_u \Vert \mathcal{F}\theta \Vert^2_{L^2 \left(\mathbb{R}^2\right)} .
			\label{ieq-frame-u-fourier}
		\end{equation}
		Take any $\theta_T \in $ ran$K$, we show that 
		$$A_v \Vert \theta_T \Vert^2_{L^2 \left(\mathbb{R}^2\right)} \leq \sum_{\lambda \in \Lambda} \left| \langle \theta_T, v_\lambda \rangle \right|^2 \leq B_v \Vert \theta_T \Vert^2_{L^2 \left(\mathbb{R}^2\right)} .$$
		This is equivalent to proving
		\begin{equation}
			\label{ieq-frame-v-fourier}
			A_v \Vert\mathcal{F} \theta_T \Vert^2_{L^2 \left(\mathbb{R}^2\right)} \leq \sum_{\lambda \in \Lambda} \left| \langle \mathcal{F} \theta_T, \mathcal{F} v_\lambda \rangle \right|^2 \leq B_v \Vert \mathcal{F} \theta_T \Vert^2_{L^2 \left(\mathbb{R}^2\right)} .
		\end{equation}
		And now, for every $\lambda \in \Lambda$,  from (\ref{eq-v-lambda-Fourier}), we have
		$$ \left| \langle \mathcal{F}\theta_T, \mathcal{F} v_\lambda \rangle \right|^2 = \left| \kappa_j E^{-1}_{\gamma, 1} \left(- \left|\omega \right|^2 T^\gamma \right)\right|^2 \left|  \langle \mathcal{F}\theta_T, \mathcal{F} u_\lambda \rangle \right|^2.$$
		Using the inequality (\ref{ieq-frame-v}), it leads us to
		\begin{align*}
			\left((1+ a^2_u T^\gamma)/ \overline{c} \right)^2\left|  \langle \mathcal{F}\theta_T, \mathcal{F} u_\lambda \rangle \right|^2  
			&\leq \left| \kappa_\lambda E^{-1}_{\gamma, 1} \left(- \left|\omega \right|^2 T^\gamma \right)\right|^2 \left|  \langle \mathcal{F}\theta_T, \mathcal{F} u_\lambda \rangle \right|^2 \\
			&\leq \left((1+ b^2_u T^\gamma)/ \underline{c}\right)^2 \left|  \langle \mathcal{F}\theta_T, \mathcal{F} u_\lambda \rangle \right|^2, 
		\end{align*} 
		for every $\lambda\in\Lambda$.
		Hence, 
		\begin{align*}
			\left((1+ a^2_u T^\gamma)/ \overline{c} \right)^2 \sum_{\lambda \in \Lambda} \left|  \langle \mathcal{F}\theta_T, \mathcal{F} u_\lambda \rangle \right|^2  
			&\leq \sum_{\lambda \in \Lambda} \left| \langle \mathcal{F}\theta_T, \mathcal{F} v_\lambda \rangle \right|^2   \leq \left(1+ b^2_u T^\gamma/ \underline{c}\right)^2 \sum_{\lambda \in \Lambda} \left|  \langle \mathcal{F}\theta_T, \mathcal{F} u_\lambda \rangle \right|^2. 
		\end{align*}
		Combining with (\ref{ieq-frame-u-fourier}), we get 
		$$ \left((1+ a^2_u T^\gamma)/ \overline{c} \right)^2 A_u \Vert \mathcal{F}\theta_T \Vert^2  \leq \sum_{\lambda \in \Lambda} \left| \langle \mathcal{F}\theta_T, \mathcal{F} v_\lambda \rangle \right|^2  \leq \left((1+ b^2_u T^\gamma)/ \underline{c}\right)^2  B_u \Vert  \mathcal{F}\theta_T \Vert^2.$$
		Finally, (\ref{ieq-frame-v-fourier}) be proved for $A_v = \left( (1+a^2_u T^\gamma)/ \overline{c} \right)^2 A_u $ and $B_v = \left( (1+b^2_u T^\gamma)/ \underline{c}\right)^2  B_u$.
		
		(b) We find the source condition for the solution $\theta_0$. 
		Naturally, we can assume that $\theta_0\in H^p(\mathbb{R})$
		for $p\geq 0$. Putting
		\begin{equation}
			G({\lambda_D})=\{\omega\in\mathbb{R}: 2^{\lambda_D}a_u\leq |\omega|\leq 2^{\lambda_D}b_u\},
			\label{setG}
		\end{equation}  
		we note that ${\rm supp} u_{\lambda}\subset G(\lambda_D)$. For $\lambda=(\lambda_D,\lambda_T)$,  we can write
		\begin{align*}
			\langle \theta_0,u_\lambda\rangle &= \frac{1}{2\pi}
			\langle \mathcal{F}\theta_0,\mathcal{F}u_\lambda\rangle
			=\frac{1}{2\pi}\int_{\mathbb{R}}\mathcal{F}\theta_0(\xi)\overline{\mathcal{F}u_\lambda(\xi)}d\xi = \frac{1}{2\pi}\int_{\mathbb{R}}1_{G(\lambda_D)}(\omega)\mathcal{F}\theta_0(\xi)\overline{\mathcal{F}u_\lambda(\xi)}d\xi.
		\end{align*}
		For $\lambda_D>0$, using the Bessel inequality yields
		\begin{align*}
			\sum_{\lambda_T\in\mathbb{Z}}|\langle \theta_0,u_\lambda\rangle|^2\leq
			\frac{1}{2\pi} \|1_{G(\lambda_D)}\mathcal{F}(\theta_0)\|^2
			\leq 2^{-2p\lambda_D}\omega_{\lambda_D}^2=\kappa_{\lambda}^p\omega_{\lambda_D}^2,
		\end{align*}
		where
		$$  w_{\lambda_D}=a_u^{-p}\|1_{G(\lambda_D)}(1+\omega^2)^p\mathcal{F}(\theta_0)\|. $$ 
		For $\lambda_D\leq 0$, we have $\kappa_\lambda=1$ and
		$$ \langle \theta_0,u_\lambda\rangle=\kappa_\lambda^{p/2}\langle \theta_0,u_\lambda\rangle. $$
		Put $w_\lambda=\|u_0\|w'_\lambda $ for $\lambda_D>0$ and $w_\lambda=\langle \theta_0,u_\lambda\rangle$
		for $\lambda_D\leq 0$. Direct computations yields
		$$\sum_{\lambda_D\in \mathbb{Z}}\kappa_\lambda^{-p} |\langle\theta_0, u_\lambda\rangle|^2 = \sum_{\lambda\in\mathbb{Z}} |w_\lambda|^2\leq C\|\theta_0\|^2_{H^r}. $$
		So the function $\theta_0\in\mathbf{M}_{\varphi,E}$ where 
		$\varphi(\mu)=\mu^{p/2}$ and $E$ is large enough.
		
		(c) To obtain the order optimal result, we verify the condition in Theorem \ref{Theo:lower-bound}. We have 
		$\delta^*_\lambda=|\mathbf{v}|_{\inf}^{-1}E\sqrt{\kappa_\lambda^2\varphi(\kappa_\lambda^2)}
		=|\mathbf{v}|_{\inf}^{-1}E
		\sqrt{\kappa_\lambda^{2+p}}=|\mathbf{v}|_{\inf}^{-1}E
		2^{-\lambda_D(2+p)}$. Letting $0<\delta<|\mathbf{v}|^{-1}_{\inf}E2^{-(2+p)}$, we can choose a $\lambda_\delta$ such that 
		$$ \delta_{\lambda_\delta}^*= |\mathbf{v}|_{\inf}^{-1}E
		2^{-\lambda_\delta(2+p)}\leq \delta\leq |\mathbf{v}|_{\inf}^{-1}E
		2^{-(\lambda_\delta-1)(2+p)}=2^{(2+p)}\delta_{\lambda_\delta}^*. $$
		So we have $\delta\in \cup_{\lambda\in \Lambda}D_{\lambda,2^{-2(1+2p) }}$. 
		
	\end{proof}

	\subsection{Proof of Theorem \ref{fractional-backward}}
	
	The fact that the function $g_\alpha$ satisfies Assumptions  A, B, C is a known result. However, for the convenience of the reader, we will check these assumptions.
	
	(a) We verify that $g_\alpha(\mu)=\frac{1}{\alpha+\mu}$ and
	$\varphi(\mu)=\mu^{p/2}$ satisfy Assumptions C, A1, A2. Direct verifying yields that Assumption C holds for $g_\alpha$. The index function $\varphi$ satisfies Assumption A1. We verify Assumption A2. 
	We have 
	$$  \sqrt{\mu}g_\alpha(\mu)=\frac{\sqrt{\mu}}{\alpha+\mu}
	\leq \frac{2\sqrt{\mu}}{\sqrt{\alpha\mu}}=\frac{2}{\sqrt{\alpha}}. $$
	We verify Assumption A2 (ii). We have
	\begin{align*}
		|1-\mu g_\alpha(\mu)|\sqrt{\varphi(\mu)} &=
		\frac{\alpha\mu^{p/4}}{\alpha+\mu}.
	\end{align*}
	Put $H(\mu)=\frac{\alpha\mu^r}{\alpha+\mu}$, $r\in (0,1)$. We have
	$H'(r)=\alpha\frac{r\mu^{r-1}(\alpha+\mu)-\mu^r}{(\alpha+\mu)^2}$. The function attains its maximum when $r(\alpha+\mu)-\mu=0$ which gives $\mu=\frac{\alpha}{1-r}$. Choose $r=p/4$, we obtain 
	Assumption A2 (ii) 
	Hence $H(\mu)\leq C\alpha^{r}$. For $p=4$ we have $r=1$, $H(\alpha)\leq \alpha$ which give Assumption A2 (ii). 
	
	(b) We first consider Assumption B1. In fact we have 
	$1-\mu g_\alpha(\mu)=\frac{\alpha}{\alpha+\mu}\to 1$ as $\alpha\to\infty$. Hence Assumption B1 (i) holds. The function $g_\alpha(.)$ is continuous with respect to $\alpha$. Hence Assumption B1 (ii) holds.
	
	We verify Assumption B2. In fact we have $1-\mu g_\alpha(\mu)=\frac{\alpha}{\alpha+\mu}$. We also have
	$\ell_\alpha=\sup_{\mu\geq 0} g_\alpha(\mu)=\frac{1}{\alpha}$ which satisfies Assumption B2 (iii)
	with $\ell_*=\ell^*=1$.
	Finally, we verify that the function $\varphi$ is concave. In fact, we have $\varphi''(\mu)=(p/2)(p/2-1)\mu^{p/2-2}<0$ since $0<p\leq 2$. 
	
	(c) We have $\kappa_{\lambda,N}=e^{-NT}$, $\varphi(\mu)=
	(-\ln \mu)^{-p}$. This follows that $\varphi(\kappa^2_{\lambda,N})
	=(2NT)^{-p}$. Hence, from \eqref{set-D}, we obtain
	$$\delta_{\lambda,N}^*=|\mathbf{v}|_{\inf}^{-1}
	E\sqrt{\kappa_{\lambda,N}^2\varphi(\kappa_{\lambda,N}^2)}
	=|\mathbf{v}|_{\inf}^{-1}\sqrt{(2NT)^{-p}e^{-2NT}}.$$
	Hence, for every $\delta\in (0,\delta_0)$ where 
	$\delta_0=|\mathbf{v}|_{\inf}^{-1}\sqrt{(2T)^{-p}e^{-2T}}$
	we can find $N_0\in\mathbb{N}$ such that
	$\delta_{\lambda,N_0}^*\leq\delta\leq \delta_{\lambda,N_0-1}^* $. We note that
	\begin{align*}
		\delta_{\lambda,N_0-1}^*=
		|\mathbf{v}|_{\inf}^{-1}\sqrt{(2(N_0-1)T)^{-p}e^{-2(N_0-1)T}}\leq 
		e^T|\mathbf{v}|_{\inf}^{-1}\sqrt{(2N_0T)^{-p}e^{-2N_0T}}=e^T\delta_{\lambda,N_0}^*.
	\end{align*}
	Hence $\delta\in [\delta_{\lambda,N_0}^*,\beta^{-1} \delta_{\lambda,N_0}^*]\subset \cup_{(\lambda,N)\in\Lambda\times\mathbb{N}}D_{(\lambda,N),\beta}$ with $\beta=e^{-T}$.

	\subsection{Proof of Theorem \ref{Theo-WVD-K1}}
	
	(a) We claim that $\{u_{\lambda,N}\}$ is a frame over $L^2(\mathbb{R})$. For $\theta\in L^2(\mathbb{R})$, we have
	\begin{align}
		\langle \theta,u_{\lambda,N}\rangle=\frac{1}{2\pi}
		\langle \mathcal{F}(\theta),\mathcal{F}u_{\lambda,N}\rangle=\frac{1}{2\pi}
		\langle 1_{G(\lambda_D)}1_{B_N}\mathcal{F}(\theta), \mathcal{F}u_{\lambda}\rangle.     
	\end{align}
	Hence, using the Bessel inequality gives
	\begin{align*}
		\sum_{N=0}^\infty\sum_{\lambda\in\Lambda}|\langle \theta,u_{\lambda,N}\rangle|^2
		&=\frac{1}{2\pi}
		\sum_{N=0}^\infty\sum_{\lambda\in\Lambda}
		| \langle \mathcal{F}(\theta), \mathcal{F}u_{\lambda}\rangle|^2 \leq\frac{1}{2\pi}\sum_{N=0}^\infty \|1_{B_N}\mathcal{F}(\theta)\|^2=\frac{1}{2\pi}\|\mathcal{F}(\theta)\|^2=
		\|\theta\|^2.   
	\end{align*}
	This follows that $\{u_{\lambda,N}\}$ is a tight frame. We find $v_{\lambda,N}$ and $\kappa_{\lambda,N}$ such that
	$$ \mathcal{F} v_{\lambda,N}=\kappa_\lambda \exp(|\omega|^2T)\mathcal{F}(u_{\lambda,N})=
	\kappa_\lambda 1_{B_N} \exp(|\omega|^2T)\mathcal{F}(u_{\lambda}).  $$
	For $\omega\in B_N$, we have
	$  \exp(NT)\leq  \exp(|\omega|^2T)\leq \exp((N+1)T). $
	Hence, we can choose $\kappa_{\lambda,N}=e^{-N T}$ and obtain
	\begin{equation}
		1\leq \kappa_{\lambda,N} \exp(|\omega|^2T)\leq e^T~~~~ {\rm for}~\omega\in B_N.
		\label{singular-value}	
	\end{equation} 
	We verify that $\{v_{\lambda,N}\}$ is a frame. We have
	\begin{align*}
		\sum_{N=0}^\infty\sum_{\lambda\in\Lambda}|\langle \theta,v_{\lambda,N}\rangle|^2
		&=\frac{1}{2\pi}
		\sum_{N=0}^\infty\sum_{\lambda\in\Lambda}
		| \langle \kappa_{\lambda,N}e^{|\omega|^2T}1_{B_N}\mathcal{F}(\theta), \mathcal{F}u_{\lambda}\rangle|^2 =\frac{1}{2\pi}\sum_{N=0}^\infty \| \kappa_{\lambda,N}e^{|\omega|^2T}1_{B_N}\mathcal{F}(\theta)\|^2.   
	\end{align*}
	Using \eqref{singular-value} gives 
	\begin{align*}
		\frac{1}{2\pi}\sum_{N=0}^\infty \|1_{B_N}\mathcal{F}(\theta)\|^2
		\leq
		\frac{1}{2\pi}\sum_{N=0}^\infty \| \kappa_{\lambda,N}e^{|\omega|^2T}1_{B_N}\mathcal{F}(\theta)\|^2
		\leq \frac{e^T}{2\pi}\sum_{N=0}^\infty \| 1_{B_N}\mathcal{F}(\theta)\|^2.	
	\end{align*}
	But $\frac{1}{2\pi}\sum_{N=0}^\infty \|1_{B_N}\mathcal{F}(\theta)\|^2=\frac{1}{2\pi} \|\mathcal{F}(\theta)\|^2=\|\theta\|^2 $. Hence $\{v_{\lambda,N}\}$ is a tight frame.
	
	(b) We find the source set of our problem. 
	Here the set $G(\lambda_D)$ is defined in \eqref{setG}. For $\theta_0\in H^p(\mathbb{R})$, we have
	We find the source condition for the solution $\theta_0$. 
	As in the previous theorem, we can assume that $\theta_0\in H^p(\mathbb{R})$
	for $p\geq 0$. For $(\lambda,N)=(\lambda_D,\lambda_T,N)$,  we can write
	\begin{align*}
		\langle \theta_0,u_{\lambda,N}\rangle &= \frac{1}{2\pi}
		\langle \mathcal{F}\theta_0,\mathcal{F}u_\lambda\rangle
		=\frac{1}{2\pi}\int_{\mathbb{R}}1_{G(\lambda_D)}1_{B_N}\mathcal{F}\theta_0(\xi)\overline{\mathcal{F}u_\lambda(\xi)}d\xi.
	\end{align*}
	Hence, for $\lambda_D>0$, using the Bessel inequality yields
	\begin{align*}
		\sum_{\lambda_T\in\mathbb{Z}}	|\langle \theta_0,u_{\lambda,N}\rangle|^2&\leq
		\|1_{G_D}1_{B_N}\mathcal{F}\theta_0\|^2
		\leq (1+2^{2\lambda_D}a_u)^{-p} \omega_{\lambda_D,N}^2,
	\end{align*}
	where
	$$  \omega_{\lambda_D,N}=\|1_{G_D}1_{B_N}
	(1+\omega^2)^{p}\mathcal{F}\theta_0\|. $$
	This follows that
	$$ \sum_{\lambda_T\in\mathbb{Z}}	N^{p}|\langle \theta_0,u_{\lambda,N}\rangle|^2\leq
	N^{p}\|1_{G_D}1_{B_N}\mathcal{F}\theta_0\|^2
	\leq N^{p}(1+2^{2\lambda_D}a_u)^{-p} \omega_{\lambda_D,N}^2. $$
	On the other hand, we have $\langle\theta_0,u_{\lambda,N}\rangle\not=0$ then $B_N\cap G_{\lambda_D}\not=\emptyset$, which gives $\sqrt{N}\leq 2^{\lambda_D}b_u $.
	So we have
	$$ \sum_{\lambda_T\in\mathbb{Z}}	N^{p}|\langle \theta_0,u_{\lambda,N}\rangle|^2
	\leq 2^{2p\lambda_D}b_u^{2p}(1+2^{2\lambda_D}a_u)^{-p} \omega_{\lambda_D}^2\leq b_u^{2p}a_u^{-p} \omega_{\lambda_D,N}^2. $$
	Noting that $N=-\frac{1}{2T}\ln \kappa^2_{\lambda,N}$ we have
	$N^p=\frac{1}{(2T)^p}[\varphi(\kappa_\lambda^2)]^{-1}$ and
	$$ \sum_{\lambda_T\in\mathbb{Z}}[\varphi(\kappa_\lambda^2)]^{-1}|\langle \theta_0,u_{\lambda,N}\rangle|^2
	\leq (2T)^pb_u^{2p}a_u^{-p}  \omega_{\lambda_D,N}^2. $$
	This implies that
	$$ \sum_{N=0}^\infty\sum_{\lambda_D\in \mathbb{Z}}\sum_{\lambda_T\in\mathbb{Z}}[\varphi(\kappa_\lambda^2)]^{-1}|\langle \theta_0,u_{\lambda,N}\rangle|^2
	\leq (2T)^pb_u^{2p}a_u^{-p}  \sum_{N=0}^\infty\sum_{\lambda_D\in \mathbb{Z}}
	\omega_{\lambda_D,N}^2
	=(2T)^pb_u^{2p}a_u^{-p}\|\theta_0\|^2_{H^p}. $$
	So we obtain 
	$\theta_0\in \mathbf{M}_{\varphi,E}$ where $E=(2T)^{p/2}b_u^{p}a_u^{-p/2}\|\theta_0\|^2_{H^p}  $, $\varphi(\mu)=(-\ln\mu)^{-p}$ for $\mu\in (0,1)$. 
	
	\subsection{Proof of Theorem \ref{classical-backward}}
	
	We verify that $g_\alpha(\mu)=\frac{1}{\alpha+\mu}$ and
	$\varphi(\mu)=\left(-\ln\mu\right)^{-p}$ satisfy Assumptions C, A1, A2. As shown in the proof of Theorem
	\ref{fractional-backward}, Assumption C holds for $g_\alpha$. The index function $\varphi$ satisfies Assumption A1 (i), (ii). Using Theorem 9.1 in \cite{Tautenhahn} gives that the function $\Theta$ is convex on $(0,\infty)$ for $p>0$. 
	
	Assumption A2 (i) is verified in the proof of Theorem \ref{fractional-backward}. 
	We verify Assumption A2 (ii). We have
	\begin{align*}
		|1-\mu g_\alpha(\mu)|\sqrt{\varphi(\mu)} &=
		\frac{\alpha\left(-\ln\mu\right)^{-p/2}}{\alpha+\mu}.
	\end{align*}
	Putting $\tau=\alpha/\mu$, we obtain 
	$$ \frac{\alpha\left(-\ln\mu\right)^{-p/2}}{\alpha+\mu}=\frac{\tau(\ln(\tau \alpha^{-1}))^{-p/2}}{\tau+1}.  $$
	For $\alpha<\tau\leq\sqrt{\alpha}$, $\eta\in (0,1)$, using the inequality $z^\eta(\ln z)^{-p/2}\to 0 $ as $z\to 0$, we have
	$$  \frac{\tau(\ln(\tau \alpha^{-1}))^{-p/2}}{\tau+1}\leq 
	\frac{\tau^{1-\eta}\alpha^{\eta} (\tau\alpha^{-1})^{\eta}(\ln(\tau \alpha^{-1}))^{-p/2}}{\tau+1}\leq C\alpha^\eta\leq C' (-\ln \alpha)^{-p/2}.     $$
	For $\tau\geq\sqrt{\alpha}$ we have
	$$  \frac{\tau(\ln(\tau \alpha^{-1}))^{-p/2}}{\tau+1}\leq  
	(\ln(\tau \alpha^{-1}))^{-p/2}\leq (\ln( \alpha^{-1/2}))^{-p/2}\leq C\ln( \alpha^{-1})^{-p/2}. $$
	Using Theorem \ref{Theo-WVD-K1} (ii), we can find $E_1>0$ such that
	$\mathbf{M}_{E_1}:=\{\theta_0\in H^{p}(\mathbb{R}): \|\theta_0\|_{H^p}\leq E_1\}\subset \mathbf{M}_{\varphi,E}$. So we have
	$ \Omega\left(\mathbf{M}_{E_1}, \delta \right)\leq 
	\Omega\left(\mathbf{M}_{\varphi,E}, \delta \right) $.
	Using the classical results in \cite{Hohage, Tautenhahn},
	we can find a $C_0>0$ such that
	$ \Omega\left(\mathbf{M}_{E_1}, \delta \right)\geq 
	C_0\ln( E_1\delta^{-1})^{-p}.  $
	Hence 
	$$ \inf_R\Delta(\mathbf{M}_{\varphi, E},\delta,\mathbf{R})\geq \Omega\left(\mathbf{M}_{\varphi,E}, \delta \right)\geq 
	C_0\ln( E_1\delta^{-1})^{-p}\geq C'E\ln( E\delta^{-1})^{-p} .  $$
	So, we obtain the order optimal property of our regularization.

	\section{Conclusion}
	The paper has investigated DFD regularizations in both a priori and a posteriori cases. For the case where the $\{u_\lambda\}$ system is minimal, we have proved the seuqntial order optimality property  and the global optimality for DFD regularizations.  Some issues that need to be investigated in the future are
	
	- Methods of constructing DFDs for ill-posed problems
	
	- Investigation of the relationship between the classical source condition and the DFD source condition.
	
	- Investigation of optimality in the case where $\{u_\lambda\}$ is not minimal.
	
	- Find the condition of the DFD singular value so that the regularization method is uniformly optimal
	
	\textbf{Acknowledgments.} This research received support from Vietnam National University (VNU-HCM) under grant number T2024-18-01. The first author also completed a portion of this paper during a sponsored visiting research period at Vietnam Institute for Advanced Studies in Mathematics (VIASM) from July 1 to August 30, 2025.
	


\begin{thebibliography}{99}
		
		\bibitem{Kirch} Andreas Kirsch,  \textit{An Introduction to the Mathematical Theory of Inverse Problems}, Springer New York, NY, 2011.	
		\bibitem{Engl} H. W. Engl, M. Hanke, A. Neubauer  \textit{Regularization of Inverse Problems},1996.	
		\bibitem{Christ} Ole Christensen   \textit{An Introduction to Frames and Riesz Bases}, Birkhäuser Cham, 2009.	
		\bibitem{Donoho} David Donoho,  \textit{Nonlinear Solution of Linear Inverse Problem by Wavelet-Vaguelette Decomposition}, Applied and Computational Harmonic Analysis,  \textbf{02}, 1995,  pp. 101-125.
		\bibitem{Ebner} Andrea Ebner, Jurgen Frikel, Dirk Lorenz, Johannes Schwab, Markus Haltmeimer \textit{Regularization of Inverse Problems by Filtered Diagonal Frame Decomposition}, Inverse Problems,  \textbf{89}, 2023,  pp. 1769-1788.
		\bibitem{Goppel} Simon Goppel, Jurgen Frikel, Markus Haltmeimer \textit{Translation invariant diagonal frame decompostion of inverse problems and their regularization}, Applied and Computational Harmonic Analysis,  \textbf{62}, 2023,  pp. 66-83.
		
		\bibitem{Hohage} Thorsten Hohage, \textit{Regularization of exponentially ill-posed problems}, Numerical Functional Analysis and Optimization, 21:3-4, 2000, 439-464. 
		
		\bibitem{Hubmer} Simon Hubmer, Ronny Ramlau, Lukas Weissinger \textit{On regularization via frame decompositions with applications in tomography}, Inverse Problems,  \textbf{38}, 2022,  pp. 01-28.
		\bibitem{Vainik} G. M. Vainikko, A. Y. Veretennikov.  \emph{Iteration Procedures in Ill-Posed Problems}.
		Nauka, Moscow, 1986.
		\bibitem{Tautenhahn} Tautenhahn,  \emph{Conditional Stability Estimates for Ill-posed PDE Problems by Using Interpolation}, Numerical Functional Analysis and Optimization \textbf{34:12}, 2013, pp. 1370-1417.
		
		\bibitem{TrongNaneMinhTuan} D. D. Trong, E. Nane, N.D. Minh, N. H. Tuan, {\it Continuity of Solutions of a Class of Fractional Equations}, Potential, Volume 49, (2018), 423–478.
		
		\bibitem{193} V. A. Morozov. \emph {On the solution of functional equations by the method of regularization}, Soviet Math. Dokl. 7 (1966), 414-417.
		
	\end{thebibliography}
\end{document}